\providecommand{\U}[1]{\protect\rule{.1in}{.1in}}
\numberwithin{equation}{section}
\newtheorem{theorem}{Theorem}[section]
\newtheorem{lemma}[theorem]{Lemma}
\newtheorem{corollary}[theorem]{Corollary}
\newtheorem{proposition}[theorem]{Proposition}
\newtheorem{remark}[theorem]{Remark}
\newtheorem{definition}[theorem]{Definition}
\newtheorem{hypotheses}[theorem]{Hypotheses}
\def\<{\langle}
\def\>{\rangle}
\def\E{\mathbb{E}}
\def\P{\mathbb{P}}
\def\R{\mathbb{R}}
\def\T{\mathbb{T}}
\def\Z{\mathbb{Z}}
\begin{document}

\title{Eddy viscosity by L\'evy transport noises}

\author{Dejun Luo$^{1,2}$\thanks{Email: luodj@amss.ac.cn} \quad Feifan Teng$^{2,3}$\thanks{Email: tengfeifan@amss.ac.cn} \medskip \\
{\scriptsize $^1$SKLMS, Academy of Mathematics and Systems Science, Chinese Academy of Sciences, Beijing 100190, China} \\
{\scriptsize $^2$School of Mathematical Sciences, University of Chinese Academy of Sciences, Beijing 100049, China} \\
{\scriptsize $^3$Academy of Mathematics and Systems Science, Chinese Academy of Sciences, Beijing 100190, China}\\
}

\maketitle

\vspace{-15pt}

\begin{abstract}
We consider stochastic 2D Euler equations with $L^2$-initial vorticity and driven by L\'evy transport noise in the Marcus sense. Under a suitable scaling limit of the noises, we prove that the weak solutions converge weakly to the unique solution of the deterministic 2D Navier-Stokes equation. This shows that small scale jump noises generate eddy viscosity, extending the recent studies on It\^o-Stratonovich diffusion limit to discontinuous setting.
\end{abstract}

\textbf{Keywords:} L\'evy transport noise, Marcus integral, scaling limit, transport equation, 2D Euler equation, eddy viscosity


\section{Introduction}

We consider the stochastic 2D Euler equations on the torus $\T^2$, driven by L\'evy transport noise:
  \begin{equation}\label{stoch-2D-Euler}
  d\xi + u\cdot\nabla\xi\, dt + \diamond\, dL\cdot\nabla\xi=0, \quad \xi|_{t=0}= \xi_0,
  \end{equation}
where $\xi$ and $u$ are respectively the fluid vorticity and velocity fields, $L= L(t,x)$ stands for a pure jump L\'evy noise, taking values in the space of divergence free vector fields on $\T^2$. The notation $\diamond\, d$ means that the stochastic integral is understood in the Marcus sense, for which the stochastic calculus satisfies the usual chain rule, see \cite{Marcus, KPP95} and the recent papers \cite{ChePav14, HP23} for more information. We assume the noise $L= L(t,x)$ takes the form
  $$L(t,x)= \sum_k \sigma_k(x) Z^k_t, $$
where $\{\sigma_k\}_{k}$ are divergence free vector fields on $\T^2$ and $\{Z^k\}_k$ are independent real-valued pure jump L\'evy processes, see Section \ref{subs-struc-noise} below for more precise descriptions. Inspired by the recent studies \cite{FGL21, Gal20, Luo21} on the It\^o-Stratonovich diffusion limit for Brownian transport noise, i.e. the noises $\{Z^k\}_k$ are replaced by independent standard Brownian motions, and also the first work \cite{FPR25} with jump noise, we show that, under a suitable scaling limit of noise, the above equation \eqref{stoch-2D-Euler} is close to the deterministic 2D Navier-Stokes equation
  $$\partial_t\xi + u\cdot\nabla\xi= \kappa \Delta \xi,$$
where $\kappa>0$ is a constant representing the eddy viscosity. Indeed, to explain more clearly the main ideas underlying the scaling arguments, we will first prove such a limit for the stochastic linear transport equations on $d$-dimensional torus $\T^d$ for general $d\ge 2$.

\subsection{Motivations}\label{subs-motivation}

Before going to details, let us briefly discuss the motivations of the fluid model \eqref{stoch-2D-Euler}, following the arguments in \cite[Section 5.5.1]{FlanLuongo23} or \cite[Section II]{FlaHuang23}. The deterministic 2D Euler equation in vorticity form reads as
  $$\partial_t\xi + u\cdot\nabla\xi=0, $$
subjected to suitable initial and boundary conditions. Eddies or whirls in a fluid might be created due to irregularities of the boundary or small obstacles located in the middle of fluid. Although it is possible to include all these information in a deterministic description of the fluid,  the resulting model is too complicated to be analyzed rigorously from a mathematical point of view. One way to reduce the complexity of fluid model is to introduce noise in the equation, with suitable space-time properties. Here, we idealize the creation of eddies as a jump process, namely, we assume that such eddies are produced in very short time periods, mathematically modeled as jumps of the fluid velocity:
  $$u(t)= u(t-) + \sum_k \sigma_k\, \Delta Z^k_t, $$
where $t-$ represents the left limit, $\Delta Z^k_t= Z^k_t- Z^k_{t-}$ and the fields $\sigma_k$ are imitations of small eddies generated near the irregular boundary or obstacles. Substituting this decomposition into the above 2D Euler equation leads to the model \eqref{stoch-2D-Euler}, provided that the stochastic integral is understood in a proper sense.

In fact, in the heuristic arguments of \cite[Section 5.5.1]{FlanLuongo23}, the authors take a further Brownian limit of the above jump process, by accelerating the speed of creation of eddies with corresponding smaller size. In the limit, what they get is a Brownian transport noise $W(t,x)= \sum_k \sigma_k(x) B^k_t$, where $\{B^k\}_k$ are independent standard Brownian motions. For this noise, motivated by the Wong-Zakai principle, the stochastic integral in fluid equations are often understood in the Stratonovich sense, and it also preserves many invariants of the equations, such as Casimirs for the 2D Euler equation.

We recall now some recent studies on the It\^o-Stratonovich diffusion limit of Brownian transport noise. For the stochastic linear transport equations, Galeati \cite{Gal20} showed that, under a suitable scaling limit of the noises, the stochastic equations converge to the deterministic heat equation which has an extra dissipation term. This result was soon extended to the nonlinear case: it was shown in \cite{FGL21} that stochastic 2D Euler equations with $L^2$-initial vorticity $\xi_0$ converge weakly to the deterministic 2D Navier-Stokes equation. As a comparison, we mention that in a slightly earlier work \cite{FlanLuo20}, Flandoli and Luo considered the white noise solutions of stochastic 2D Euler equations, and proved that they converge weakly to the unique stationary solution of 2D Navier-Stokes equation driven by space-time white noise. Regarding the transport noise as small-scale perturbations due to turbulence, the results in \cite{Gal20, FGL21} can be interpreted as the emergence of eddy viscosity, meaning that turbulent small fluctuations are dissipative on the mean part of fluid, a fact often referred as the Boussinesq hypothesis \cite{Boussinesq}. The It\^o-Stratonovich diffusion limit has been widely extended to various settings, see e.g. \cite{Luo21} for stochastic 2D Boussinesq systems, \cite{FGL21-CPDE, FL21 PTRF, Lange24, Agresti24, Agresti} for suppression of blow-up of nonlinear equations, and \cite{FGL24, GL24, LuoTang23, ZhangHuang24} for quantitative convergence rates and the related Gaussian fluctuations. We also refer to a few recent papers \cite{ButLuon24, BFL24, FlanLuo24} dealing with certain 3D models which involve also a stretching noise, more difficult than pure transport ones.  In the above works, the space domain is always taken as the torus $\T^d$ where the noise $W(t,x)$ admits an explicit Fourier series expansion, which facilitates various computations. For some generalizations to other settings, see \cite{FGL22, FlanLuongo22} for bounded domains and infinite channels, \cite{LXZ25} for the full space $\R^d$, and finally \cite{Huang25} for the case of closed manifolds.

With the above developments in mind, a natural question is to extend the It\^o-Stratonovich diffusion limit to other noises. The first choice might be replacing the Brownian motions in $W(t,x)= \sum_k \sigma_k(x) B^k_t$ by fractional Gaussian processes; this has been done by Flandoli and Russo \cite{FlaRus23}, showing a reduced dissipation property for small times and an enhanced diffusion for large times, compared to the standard Brownian case, cf. the preprint \cite{CifFla25} for related numerical results. We also mention the works \cite{Pap22, LiuLuo25} where the linear transport equations and 2D Euler equations driven by the Ornstein-Uhlenbeck transport noise are investigated, establishing finite-time mixing and dissipation enhancement by noise. Another natural extension is to study jump noise as mentioned at the beginning of the paper. Indeed, Flandoli et al. \cite{FPR25} considered stochastic linear transport equations driven by L\'evy transport noise:
  \begin{equation}\label{eq-FPR25}
  df(t,x)= (\sigma(x)\nabla f(t,x))\diamond d Z_t, \quad f(0,x)= f_0(x),
  \end{equation}
where $\sigma:\R^d\to \R^{d\times m}$, $f_0:\R^d\to \R$, and the process $Z$ is an $m$-dimensional pure jump L\'evy process corresponding to the L\'evy measure $\nu$ on $\R^m$. The main result of \cite{FPR25} concerns the case $d=1$ with $\sigma\in \R^m$ being a constant vector, and $\nu$ has a radially symmetric density (e.g. the classical $\alpha$-stable density). In this setting, Flandoli et al. proved that the mean $U(t,x):= \E f(t,x)$ satisfies the deterministic equation
  $$\partial_t U(t,x)= \mathcal L_{1,\alpha} U(t,x),$$
where $\mathcal L_{1,\alpha}$ is similar, but not identical, to a fractional Laplacian operator.

The purpose of the current paper is to extend the result in \cite{FPR25}. In particular, for suitably chosen L\'evy vector fields $L(t,x)$, we show that the solutions, rather than their mean, to stochastic transport equations are close, in a weak sense, to the unique solution of the heat equation. This is stronger than the averaged dissipation enhancement. Moreover, we also extend the result to nonlinear setting, showing that the stochastic 2D Euler equations \eqref{stoch-2D-Euler} converge weakly to the deterministic 2D Navier-Stokes equation. An interesting point of our work is that the extra dissipation term in limit equations is still the standard Laplacian operator, rather than the fractional counterpart as in \cite{FPR25}. This might be a consequence of the special structure of our noise (see the next section for details), mimicking that of Brownian transport noise. In the limit of small-scale noise, the difference between Brownian and L\'evy transport noises vanishes and the same eddy dissipation operator emerges. This seems to be reasonable if one takes into account the Brownian limit in \cite[Section 5.5.1, pp. 173--4]{FlanLuongo23}, which shows that L\'evy transport noises converge to Brownian ones under the diffusive scaling.

In spite of the above heuristic arguments, it is far from obvious to identify in our setting that the extra dissipation term is a Laplacian operator, compared to the case of Brownian transport noise where the operator arises naturally as the It\^o-Stratonovich corrector. Indeed, for the L\'evy transport noise in \eqref{stoch-2D-Euler} which is understood in the Marcus sense, the counterpart of the It\^o-Stratonovich corrector is given by a nonlocal operator in terms of an integral with respect to the L\'evy measure. To show that the integral operators converge to a Laplacian in a suitable scaling limit, we need to analyze carefully the difference term in the integral which involves the Marcus maps. This term also causes considerable difficulty in the proof of existence of weak solutions to stochastic 2D Euler equations \eqref{stoch-2D-Euler}, since one has to show that the Marcus maps in the Galerkin approximate equations converge strongly in $L^2(\T^2)$ to the infinite dimensional counterpart in the limit equation. For these reasons, we will first consider the simpler stochastic linear transport equation for which the existence of solutions can be deduced easily from existing references, and then we turn to the nonlinear case.

Before moving forward, we introduce some notations used frequently in the sequel. Denote by \(\mathbb{Z}^d_0 := \mathbb{Z}^d \setminus \{0\}\) the set of nonzero lattice points, and $\ell^2= \ell^2(\mathbb{Z}^d_0)$ the collection of square summable sequences indexed by $\mathbb{Z}^d_0$. For $n\ge 1$, $C^n(\T^d)$ is the space of functions on $\T^d$ differentiable up to order $n$. For $p\ge 1$, we write $L^p_x= L^p(\T^d)$ for the Lebesgue spaces on the torus $\T^d$. Next, for $s\in \R$, we denote by $H^s= H^s(\T^d)$ the usual Sobolev space of order $s$, while $H^0$ will be written as $L^2$. The notation $\<\cdot, \cdot\>$ stands for the inner product in $L^2$ and also the duality between $H^s$ and $H^{-s}$. Since the equations considered in this paper preserve the space average of solutions, we assume the spaces $L^p$ and $H^s$ consist of functions with zero average. We will use the same notations for spaces of vector fields, which will not cause confusion according to the context. Given a Polish space $(S,\rho)$ with a metric $\rho$, we shall write $D([0,T];S)$ as the Skorohod space endowed with the Skorohod topology, see Section \ref{subs-Skorohod-space} for more details. Finally, we denote by $C$ some generic constant which may change from line to line and, if we want to stress its dependence on the parameters $d,\phi$, we will write it as $C_{d,\phi}$.

\subsection{Structure of noise}\label{subs-struc-noise}

In this part we introduce in detail the L\'evy noise used in the paper. We first recall the standard trigonometric basis \(\{e_k \}_{k \in \mathbb{Z}^d_0}\) of \(L^2(\mathbb{T}^d)\), defined as
\begin{equation}\label{ek}
    e_k(x) = \sqrt{2} \begin{cases}
\cos(2\pi k \cdot x), & k \in \mathbb{Z}^d_+, \\
\sin(2\pi k \cdot x), & k \in \mathbb{Z}^d_-,
\end{cases}
\end{equation}
with \(\{\mathbb{Z}^d_+, \mathbb{Z}^d_-\}\) forming a partition of \(\mathbb{Z}^d_0\) satisfying \(\mathbb{Z}^d_+ = -\mathbb{Z}^d_-\).

We now define the pure jump L\'evy vector fields defined on a filtered probability space $(\Omega,\mathcal{F}, (\mathcal{F}_t)_{t \geq 0}, \P)$:
\begin{equation}\label{noise}
    L(t,x) = \sum_{k \in \Z^d_0} \sum_{i=1}^{d-1} \theta_k \sigma_{k,i}(x) Z^{k,i}_t.
\end{equation}
Here, the coefficients $\theta = \{\theta_k\}_{k \in \Z^d_0}$ belong to $ \ell^2(\Z^d_0)$, the collection of square summable sequences indexed by $\Z^d_0$; furthermore, $\|\theta\|_{\ell^2} = 1$ and $\theta_k$ is symmetric in $k$. For simplicity, we assume that only finitely many components of $\theta$ are nonzero, thus the noise in \eqref{noise} is finite dimensional and smooth in $x\in \T^d$. In the sequel, we write $\sum_{k \in \Z^d_0} \sum_{i=1}^{d-1}$ simply as $\sum_{k,i}$.

Next, the family $\{\sigma_{k,i}\}_{k,i}$ are divergence-free vector fields on $\mathbb{T}^d$, defined by
\begin{equation}\label{sigma_k,i}
    \sigma_{k,i}(x) = a_{k,i} e_k(x),
\end{equation}
where  $\{a_{k,1} ,\ldots, a_{k,d-1} \} \subset \mathbb S^{d-1}$ is an orthonormal basis of the $(d-1)$-dimensional subspace $k^\perp= \{x\in \R^d: k\cdot x=0\}$, satisfying $a_{k,i} = a_{-k,i}$ for all $i = 1, \dots, d-1$. By definition, it is easy to see that $\sigma_{k,i}(x) \cdot \nabla \sigma_{k,i}^j(x) = 0$ for each $j=1,\ldots,d$, $k \in \Z^d_0$, and $i=1,\ldots,d-1$. In the 2D case, we will write $\sigma_{k,1}(x) = a_{k,1} e_k(x)$ simply as $\sigma_k(x)= a_k e_k(x)$, where $a_k=\frac{k^\perp}{|k|}$ for $k\in \Z^2_+$ and $k^\perp= (k_2,-k_1)$. We recall an important identity involving the radially symmetric coefficients $\{\theta_k\}_k$ and the vector fields $\{\sigma_{k,i}\}_{k,i}$:
  \begin{equation}\label{eq-important-equality}
  \sum_{k,i} \theta_k^2 (\sigma_{k,i}\otimes \sigma_{k,i})(x) = 2C_d I_d,
  \end{equation}
where $C_d$ is a constant depending on dimension $d$ and $I_d$ is the $d\times d$ identity matrix; cf. \cite[Lemma 2.6]{FlanLuo20} for a proof in the 2D case and \cite[(9)]{Gal20} for general case.

Finally, $\{Z^{k,i}_t\}_{k,i}$ is a collection of independent, identically distributed (i.i.d.) one dimensional pure jump L\'evy processes on $(\Omega,\mathcal{F}, (\mathcal{F}_t)_{t \geq 0}, \P)$. As discussed in Section \ref{subs-motivation}, we regard the noises as relatively small-scale perturbations of the fluid velocity, and thus we consider only the small jump component, that is,
 \begin{equation}\label{Z_t}
     Z^{k,i}_t = \int_0^t \int_{|z| \le 1} z  \tilde N_{k,i}(dz, ds),
 \end{equation}
where $\tilde N_{k,i}(dz, ds) = N_{k,i}(dz, ds) - \nu(dz)ds$ is the compensated Poisson random measure corresponding to the noise $Z^{k,i}$. As usual, we assume the L\'evy measure $\nu(dz)$ satisfies $\nu(\{0\})=0$ and $\int_{|z| \le 1} z^2\,\nu(dz)<\infty$.

We end this part by pointing out that our noise can be written in the same form as in \eqref{eq-FPR25}. In fact, letting $\Gamma:= \{k\in \Z^d_0: \theta_k\neq 0\}$ and $m:= \# \Gamma$, we define the matrix-valued function $\sigma:\T^d\to \R^{d\times (d-1)m}$ whose $(k,i)$-column is just the vector $\theta_k \sigma_{k,i}$; similarly, the L\'evy process $Z_t$ has $Z^{k,i}_t$ as its $(k,i)$-component. In this case, the L\'evy measure $\bar\nu$ of $Z_t$ is a singular measure supported on the coordinate axes in $\R^{(d-1)m}$; in other words, the L\'evy noise considered in this paper is cylindrical.

\subsection{Main results}

This section consists of two parts: we first present the scaling limit result for stochastic linear transport equations, then we state a theorem on the existence of weak solutions to the nonlinear equation \eqref{stoch-2D-Euler}, together with the weak convergence to deterministic 2D Navier-Stokes equation.

\subsubsection{Stochastic linear transport equations} \label{subsubsec-transport-eq}

We first consider the stochastic linear transport equation on $\mathbb{T}^d$:
  \begin{equation}\label{STE-Marcus}
  d\xi= \sum_{k,i} \theta_k \sigma_{k,i}\cdot \nabla \xi \diamond dZ^{k,i}_t, \quad \xi_0\in L^2(\T^d).
  \end{equation}
Following \cite{HP23}, we understand \eqref{STE-Marcus} as the following integral equation:
\begin{equation}\label{STE-Marcus.1-1}
  \begin{aligned}
  \xi(t) &= \xi_0 + \sum_{k,i} \int_0^t\! \int_{|z|\le 1} \big[ {\rm e}^{z \theta_k \sigma_{k,i}\cdot\nabla} \xi(s-) - \xi(s-) \big]  \tilde N_{k,i}(dz,ds) \\
  &\quad + \sum_{k,i} \int_0^t\! \int_{|z|\le 1} \big[ {\rm e}^{z \theta_k \sigma_{k,i}\cdot\nabla} \xi(s) - \xi(s) - z \theta_k \sigma_{k,i} \cdot \nabla \xi(s) \big]\,\nu(dz) ds.
  \end{aligned}
 \end{equation}
Here, the exponential mapping ${\rm e}^{z \theta_k \sigma_{k,i}\cdot\nabla}$ is defined as follows: for any $f:\mathbb{T}^d\to \mathbb{R}$, ${\rm e}^{z \theta_k \sigma_{k,i}\cdot\nabla} f = g(1,\cdot)$, where $g(t,x)$ solves the transport equation
\begin{equation}\label{define e}
     \partial_t g(t) = z \theta_k \sigma_{k,i} \cdot \nabla g(t), \quad g(0,x) = f(x).
\end{equation}
Let $\{\varphi^{k,i}_t\}_{t\in \R}$ be the flow of measure-preserving diffeomorphisms on $\T^d$ generated by the divergence-free vector field $z \theta_k \sigma_{k,i}$:
  \begin{equation}\label{characteristics}
  \partial_t \varphi^{k,i}_t(x)= - z \theta_k \sigma_{k,i}(\varphi^{k,i}_t(x)), \quad \varphi^{k,i}_0(x)=x\in \T^d.
  \end{equation}
For notational simplicity, we omit the explicit dependence on $z$ in $\varphi_t^{k,i}$.
Therefore we have the following  relation between the exponential operator and the inverse flow:
\begin{equation}\label{guanxi-0}
    {\rm e}^{z\theta_k\sigma_{k,i}\cdot\nabla}\xi(s-) = \xi\big(s-,\varphi_{1}^{k,i,-1}\big),
\end{equation}
 where $\{\varphi^{k,i,-1}_t\}_{t\in\mathbb{R}}$ denotes the inverse flow of $\{\varphi^{k,i}_t\}_{t\in\mathbb{R}}$. Using \eqref{guanxi-0}, \eqref{STE-Marcus.1-1} can be written as
\begin{equation*}
  \begin{aligned}
  \xi(t) &= \xi_0 + \sum_{k,i} \int_0^t\! \int_{|z|\le 1} \big[ \xi(s-,\varphi^{k,i,-1}_1) - \xi(s-) \big]  \tilde N_{k,i}(dz,ds) \\
  &\quad + \sum_{k,i} \int_0^t\! \int_{|z|\le 1} \big[ \xi(s,\varphi^{k,i,-1}_1) - \xi(s) - z \theta_k \sigma_{k,i} \cdot \nabla \xi(s) \big]\,\nu(dz) ds.
  \end{aligned}
\end{equation*}
We shall further interpret the above equation in the weak sense: for any $\phi\in C^\infty(\T^d)$, testing the equation against $\phi$ and integrating by parts, we arrive at
  \begin{equation}\label{STE-Marcus-2}
  \begin{aligned}
  \langle \xi(t),\phi\rangle &= \langle \xi_0,\phi\rangle+ \sum_{k,i} \int_0^t\! \int_{|z|\le 1} \big\langle \xi(s-), \phi\big(\varphi_{1}^{k,i} \big) -\phi \big\rangle   \, \tilde N_{k,i}(dz,ds) \\
  &\quad + \sum_{k,i}\int_0^t\! \int_{|z|\le 1} \big\langle \xi(s) , \phi\big(\varphi_{1}^{k,i} \big) -\phi + z \theta_k\sigma_{k,i} \cdot\nabla \phi \big\rangle  \, \nu(dz)ds.
  \end{aligned}
  \end{equation}
If $\xi\in L^2\big(\Omega, L^2\big([0,T];L^2(\mathbb{T}^d)\big)\big)$, one can show that all the terms in the above equation make sense, see the beginning of Section \ref{sect:ste} for the computations.

Thanks to the well-posedness results in \cite{HP23} for \eqref{STE-Marcus} with smooth initial data, for any $\xi_0\in L^2(\mathbb{T}^d)$, we shall prove in Section \ref{subs-STE-existence} existence of solutions to the above equation, preserving the $L^2(\mathbb{T}^d)$-norm, see Theorem \ref{cunzaixing} below.

Our main interest is studying the asymptotic behavior of stochastic transport equations under a certain scaling limit of the noise. To this end, we take a sequence of noise coefficients $\theta^n$ satisfying
\begin{hypotheses}\label{hypo-coefficients}
\begin{enumerate}
    \item[\rm (1)] $\theta^n_k=\theta^n_j$ for all $|k|=|j|;$
    \item[\rm (2)] $\theta^n\in \ell^2$ with  $\|\theta^n\|_{\ell^{2}}=1$ for each $n$;
    \item[\rm (3)] $\|\theta^n\|_{\ell^{\infty}}\rightarrow0 $ as $n\rightarrow\infty$.
\end{enumerate}
\end{hypotheses}
Here is an example of such sequences: for $a\in (0,d/2)$ and $n\ge 1$, let $\tilde\theta^n_k= \frac1{|k|^a} \textbf{1}_{\{1\le |k|\le n\}}$ and define $\theta^n_k:= \tilde\theta^n_k /\|\tilde\theta^n\|_{\ell^2}$, $k\in \Z^d_0$. Then the family of coefficients $\{\theta^n \}_{n\ge 1} \subset \ell^2$ satisfy the above conditions.

Given such a noise coefficient $\theta^n$, we consider the stochastic transport equation
  \begin{equation}\label{STE-Marcus-n}
  d\xi^n= \sum_{k,i} \theta^n_k \sigma_{k,i}\cdot \nabla \xi^n \diamond dZ^{k,i}_t, \quad \xi^n_0=\xi_0\in L^2(\T^d).
  \end{equation}
Similarly as above, this equation is understood in the weak sense: for any $\phi\in C^\infty(\T^d)$,
  \begin{equation} \label{intro-STE-weak}
  \begin{aligned}
  \langle \xi^{n}(t),\phi\rangle&= \langle \xi_0,\phi\rangle+ \sum_{k,i} \int_0^t\! \int_{|z|\le 1} \big\langle \xi^{n}(s-), \phi\big(\varphi_{n,1}^{k,i} \big) -\phi \big\rangle   \, \tilde N_{k,i}(dz,ds) \\
  &\quad + \sum_{k,i}\int_0^t\! \int_{|z|\le 1} \big\langle \xi^n(s) , \phi\big(\varphi_{n,1}^{k,i} \big) -\phi + z \theta^n_k\sigma_{k,i} \cdot\nabla \phi \big\rangle  \, \nu(dz)ds,
  \end{aligned}
  \end{equation}
where the measure-preserving flow $\{\varphi_{n,t}^{k,i}\}_{t\ge 0}$ is generated  by \eqref{characteristics} with $\theta_k$ replaced by $\theta^n_k$. Thanks to Theorem \ref{cunzaixing}, for any $n\ge 1$, \eqref{STE-Marcus-n} admits a weak solution $\xi^n$ which is strong in the probabilistic sense, satisfying $\P$-a.s. for all $t\ge 0$, $\|\xi^n(t) \|_{L^2}=\|\xi_0 \|_{L^2}$.

Now we can state the first main result of our paper.

\begin{theorem}\label{main sca}
As $n\to\infty$, the solutions $\xi^n$ to \eqref{STE-Marcus-n} converge in probability, in the weak-$\ast$ topology of $L^\infty([0,T];L^2(\mathbb{T}^d))$, to the unique solution of
  \begin{equation}\label{limit-heat-eq}
  \partial_t \xi  = \kappa \Delta \xi , \quad \xi|_{t=0} =\xi_0,
  \end{equation}
where $\kappa= C_d \int_{|z|\le 1} z^2 \,\nu(dz)$ for some dimension-dependent constant $C_d>0$.
\end{theorem}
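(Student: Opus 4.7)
My strategy follows the Galeati/FGL scheme, adapted to the Marcus (jump) setting. From Theorem~\ref{cunzaixing}, $\|\xi^n(t)\|_{L^2}=\|\xi_0\|_{L^2}$ a.s.\ for all $n,t$, so the family $\{\xi^n\}$ lies pathwise in a closed $L^2$-ball, compact and metrizable in the weak topology. Combining this uniform moment bound with a direct $H^{-\delta}$-oscillation estimate read off from the weak formulation \eqref{intro-STE-weak}, the laws of $\xi^n$ are tight in $D([0,T];H^{-\delta}(\T^d))$ for any $\delta>0$. By Skorokhod representation I pass to a subsequence $\xi^{n_j}$ converging a.s.\ in this space, and weakly in $L^2$ at every $t$, to some $\xi\in L^\infty([0,T];L^2)$.

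The heart of the argument is the Taylor analysis of the Marcus drift and the martingale in \eqref{intro-STE-weak}. The flow $\varphi_{n,1}^{k,i}$ is generated by $-z\theta_k^n\sigma_{k,i}$, and the structural identity $(\sigma_{k,i}\cdot\nabla)\sigma_{k,i}\equiv 0$ kills its second-order term, giving $\varphi_{n,1}^{k,i}(x)=x-z\theta_k^n\sigma_{k,i}(x)+O(|z\theta_k^n|^3)$ uniformly in $x$. Inserting this into $\phi(\varphi_{n,1}^{k,i})-\phi+z\theta_k^n\sigma_{k,i}\cdot\nabla\phi$ and Taylor-expanding $\phi$ to second order produces the quadratic main term $\tfrac12(z\theta_k^n)^2(\sigma_{k,i}\otimes\sigma_{k,i}):\nabla^2\phi$ plus a cubic remainder bounded by $C_\phi|z\theta_k^n|^3$. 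Summing over $(k,i)$ and integrating $z^2\,\nu(dz)$ on $\{|z|\le 1\}$, the key identity \eqref{eq-important-equality} collapses the main term to $\kappa\,\langle\xi^n(s),\Delta\phi\rangle$, while the cubic remainder is controlled by $\|\theta^n\|_{\ell^\infty}\sum_{k,i}(\theta_k^n)^2\int |z|^3\nu(dz)\lesssim \|\theta^n\|_{\ell^\infty}\to 0$, using $|z|^3\le z^2$ on the support of $\nu$ and Hypotheses~\ref{hypo-coefficients}. For the compensated jump integral $M^n_t(\phi)$, Itô's isometry together with the first-order expansion of $\phi(\varphi_{n,1}^{k,i})-\phi$ yields $\E|M^n_t(\phi)|^2 \lesssim \int z^2\nu(dz)\int_0^t \sum_{k,i}(\theta_k^n)^2 \E\langle\xi^n(s),\sigma_{k,i}\cdot\nabla\phi\rangle^2\,ds + o(1)$, and a Bessel-type argument using $\sigma_{k,i}=a_{k,i}e_k$ and $\sum_i(a_{k,i}\cdot v)^2\le|v|^2$ bounds $\sum_{k,i}\langle\xi^n,\sigma_{k,i}\cdot\nabla\phi\rangle^2\le \|\xi^n\|_{L^2}^2\|\nabla\phi\|_\infty^2$; hence the whole double sum is $\lesssim \|\theta^n\|_{\ell^\infty}^2\|\xi_0\|_{L^2}^2\|\nabla\phi\|_\infty^2\to 0$ and $M^n_t(\phi)$ vanishes in $L^2(\Omega)$, uniformly in $t$.

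Passing to the limit in \eqref{intro-STE-weak} along $\xi^{n_j}$, I get $\langle\xi(t),\phi\rangle=\langle\xi_0,\phi\rangle+\kappa\int_0^t\langle\xi(s),\Delta\phi\rangle\,ds$ for every $\phi\in C^\infty(\T^d)$, i.e.\ $\xi$ is the unique weak $L^\infty_tL^2_x$-solution of \eqref{limit-heat-eq}. Since the limit is deterministic and unique, every subsequential limit coincides with it, and a Gyöngy--Krylov argument on the original probability space upgrades this to convergence in probability in the weak-$\ast$ topology of $L^\infty([0,T];L^2)$. The \textbf{main obstacle} is the identification of the limiting drift: the collapse of the Marcus compensator to $\kappa\Delta$ requires both the algebraic identity $(\sigma_{k,i}\cdot\nabla)\sigma_{k,i}\equiv 0$ (without which the flow expansion would contribute an extra second-order drift surviving the scaling) and the sharp cubic estimate that extracts a factor of $\|\theta^n\|_{\ell^\infty}$; verifying these bounds rigorously in the Marcus/Poisson framework, with only the weak moment assumption $\int_{|z|\le 1}z^2\nu(dz)<\infty$, is the technically delicate point of the argument.
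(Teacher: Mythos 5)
Your identification of the limit coincides with the paper's: the flow formula $\varphi_{n,1}^{k,i}(x)=x-z\theta_k^n\sigma_{k,i}(x)$ coming from $(\sigma_{k,i}\cdot\nabla)\sigma_{k,i}\equiv 0$ (the paper proves it with no remainder at all, see \eqref{flow-expression-n}), the second-order Taylor expansion of $\phi$, the identity \eqref{eq-important-equality} collapsing the quadratic term to $\kappa\,\Delta\phi$, the cubic remainder carrying a factor $\|\theta^n\|_{\ell^\infty}$, and the mean-square vanishing of the compensated jump integral via the It\^o isometry and Bessel's inequality for the orthonormal family $\{\sigma_{k,i}\}$ are precisely Lemmas \ref{lem-levy} and \ref{lem-mart}. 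Where you genuinely diverge is the compactness and limit-passage framework: you route the argument through tightness in $D([0,T];H^{-\delta})$, Skorokhod representation, a.s.\ convergence and a Gy\"ongy--Krylov step, which is the machinery the paper reserves for the nonlinear 2D Euler equations (Sections 4--5). For the linear equation the paper takes a lighter path: all $\xi^n$ are probabilistically strong solutions on one fixed space with $\|\xi^n(t)\|_{L^2}=\|\xi_0\|_{L^2}$, so Banach--Alaoglu gives weak-$\ast$ convergent subsequences in $L^\infty(\Omega;L^\infty([0,T];L^2))$; testing \eqref{intro-STE-weak} against $Y(\omega)h(t)\phi$ and using the two lemmas identifies every subsequential limit as the unique weak solution of the heat equation, whence the whole sequence converges, and convergence in probability follows simply because the limit is deterministic --- no tightness, Skorohod or Gy\"ongy--Krylov is needed, which is exactly why the linear case is presented first. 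Your route is valid but heavier; if you keep it, be aware that the direct Aldous/oscillation estimate naturally lives in $H^{-\beta}$ for $\beta$ large (roughly $\beta>\tfrac d2+2$) and must be interpolated against the uniform $L^2$ bound to reach arbitrary $\delta>0$, as in Proposition \ref{qianrun}, and that the Gy\"ongy--Krylov argument is superfluous since convergence in law to a deterministic limit already upgrades to convergence in probability.
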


This result will be proved in Section \ref{subs-scaling-SLTE}. We remark that we assume for simplicity the initial conditions of \eqref{STE-Marcus-n} are the same. Similar result still holds by considering a sequence of initial data $\{\xi^n_0 \}_n \subset L^2(\T^d)$ such that $\xi^n_0$ converges weakly to some $\xi_0\in L^2(\T^d)$.

\subsubsection{Stochastic 2D Euler equations}

Now we consider the stochastic 2D Euler equation \eqref{stoch-2D-Euler} which, using the L\'evy noise in 2D, can be written more precisely as
\begin{equation}\label{stoch-2D-Euler-eq}
\begin{cases}
 d\xi+ u \cdot \nabla \xi \, dt + \sum_{k \in \mathbb{Z}^2_0} \theta_k\sigma_k  \cdot \nabla \xi \diamond dZ^k_t = 0,\quad t\in[0,T], \\
\xi(0, \cdot) = \xi_0.
\end{cases}
\end{equation}
Recall that the velocity field $u$ is related to vorticity $\xi$ via the Biot-Savart law: $u = K\ast \xi= - \nabla^\perp (-\Delta)^{-1} \xi$, where $K$ is the Biot-Savart kernel.

To give the meaning of weak solutions to the above equation, and also to fix notations in this setting, we follow the heuristic arguments of the last section and consider the following integral form:
\begin{equation}\label{2D-Euler-form-2}
\begin{aligned}
\xi(t) &= \xi_0  - \int_0^t u(s) \cdot \nabla \xi(s) \, ds  + \sum_{k \in \mathbb{Z}^2_0} \int_0^t\! \int_{|z|\le 1} \big[{\rm e}^{-z \theta_k\sigma_k\cdot\nabla}\xi(s-) - \xi(s-)\big] \tilde{N}_k(dz, ds) \\
&\quad + \sum_{k \in \mathbb{Z}^2_0} \int_0^t\! \int_{|z|\le 1} \big[{\rm e}^{-z \theta_k\sigma_k\cdot\nabla}\xi(s) - \xi(s) + z \theta_k \sigma_k\cdot  \nabla \xi(s) \big] \,\nu(dz)ds.
\end{aligned}
\end{equation}
Note that, as the noises in \eqref{stoch-2D-Euler-eq} appear on the left-hand side of the equation, there is a minus sign in the operator ${\rm e}^{-z \theta_k\sigma_k\cdot\nabla}$, which is obtained from \eqref{define e} by changing the sign of the right-hand side. Similarly to \eqref{characteristics} and \eqref{guanxi-0},  we have
\begin{equation}\label{guanxi}
    {\rm e}^{-z \theta_k\sigma_k\cdot\nabla}\xi(s) = \xi\big(s,\varphi_{1}^{k,z,-1}\big)
\end{equation}
with the flow $\{\varphi_t^{k,z} \}_{t}$ generated by
  \begin{equation}\label{flow}
  \partial_t \varphi_t^{k,z}(x)= z \theta_k \sigma_{k}(\varphi_t^{k,z}(x)), \quad \varphi_0^{k,z}(x)=x\in \mathbb{T}^2.
  \end{equation}
The inverse flow leads to the reformulation of \eqref{2D-Euler-form-2} as
  \begin{equation}\label{2d-Euler-form2}
  \begin{aligned}
  \xi(t) &= \xi_0 -\int_0^t u(s) \cdot \nabla \xi(s) \, ds + \sum_{k \in \mathbb{Z}^2_0} \int_0^t\! \int_{|z|\le 1} \big[ \xi\big(s-,\varphi_{1}^{k,z,-1}\big) - \xi(s-)\big] \, \tilde N_{k}(dz,ds) \\
  &\quad + \sum_{k \in \mathbb{Z}^2_0} \int_0^t\! \int_{|z|\le 1} \big[ \xi\big(s,\varphi_{1}^{k,z,-1} \big) - \xi(s) + z \theta_k \sigma_{k} \cdot\nabla \xi(s)\big] \, \,\nu(dz)ds.
  \end{aligned}
  \end{equation}
The definition of weak solutions to \eqref{stoch-2D-Euler-eq} is based on a weak formulation of \eqref{2d-Euler-form2} with test functions.

\begin{definition}\label{def-weak-solution}
Let $(\Omega,\mathcal{F}, (\mathcal{F}_t)_{t \geq 0}, \P)$ be a stochastic basis on which are defined a sequence of i.i.d. pure jump $(\mathcal{F}_t)$-L\'evy processes $\{Z^k_t\}_k$. We say that an $(\mathcal{F}_t)$-predictable c\`adl\`ag process $\xi$, with trajectories in $L^\infty([0,T];L^2(\mathbb{T}^2)) \cap D([0,T];H^{-\varepsilon}(\mathbb{T}^2))$, is a weak solution to \eqref{stoch-2D-Euler-eq} if $\P$-a.s. $\|\xi(t) \|_{L^2} \le \|\xi_0\|_{L^2}$, and for any $\phi\in C^\infty(\T^2)$ and $t\in [0,T]$, it holds
    \begin{equation*}
    \begin{aligned}
        \langle \xi(t),\phi\rangle &= \langle \xi_0,\phi\rangle+\int_0^t\langle \xi(s), u(s)\cdot\nabla\phi\rangle\,ds \\
        &\quad + \sum_{k\in \mathbb{Z}_0^2}\int_0^t\! \int_{|z|\leq1}\big\langle \xi(s-),\phi(\varphi_1^{k,z}) - \phi\big\rangle \tilde{N}_k(dz,ds) \\
        &\quad + \sum_{k\in \mathbb{Z}_0^2} \int_0^t\! \int_{|z|\leq1}\big\langle \xi(s),\phi(\varphi_1^{k,z}) - \phi-  z\theta_k\sigma_k\cdot\nabla\phi\big\rangle \,\nu(dz)ds.
    \end{aligned}
    \end{equation*}
\end{definition}

Here is an existence result for \eqref{stoch-2D-Euler-eq} which will be proved in Section \ref{sect:see} by the classical methods of Galerkin approximation and compactness arguments, but we have to overcome the difficulties related to the Marcus mappings.

\begin{theorem}\label{thm:weak_solution}
For any initial vorticity $\xi_0 \in L^2(\mathbb{T}^2)$, \eqref{stoch-2D-Euler-eq} admits at least one weak solution $\xi$ in the sense of Definition \ref{def-weak-solution}.
\end{theorem}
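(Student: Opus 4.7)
The plan follows the classical Galerkin approximation plus compactness/tightness strategy, with extra care devoted to the Marcus jump maps. For each $N\ge 1$, let $H_N = \mathrm{span}\{e_k:1\le|k|\le N\}$ with orthogonal projection $P_N:L^2(\mathbb T^2)\to H_N$, and consider the finite-dimensional Marcus SDE
\begin{equation*}
d\xi^N = -P_N\bigl(u^N\cdot\nabla\xi^N\bigr)\,dt + \sum_k \theta_k P_N\bigl(\sigma_k\cdot\nabla\xi^N\bigr)\diamond dZ^k_t, \qquad \xi^N(0)=P_N\xi_0,
\end{equation*}
with $u^N=K\ast\xi^N$. Since only finitely many $\theta_k$ are nonzero and the coefficients are locally Lipschitz on $H_N$, standard theory for finite-dimensional Marcus SDEs yields a unique global strong solution. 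The restricted operator $v\mapsto P_N(\sigma_k\cdot\nabla v)$ is skew-adjoint on $H_N$ because $\sigma_k$ is divergence-free; hence its Marcus flow preserves the $L^2(\mathbb T^2)$-norm. Combined with the antisymmetry $\langle u^N\cdot\nabla\xi^N,\xi^N\rangle=0$, this yields the pathwise energy bound $\|\xi^N(t)\|_{L^2}\le \|\xi_0\|_{L^2}$, uniformly in $t$ and $N$.

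Next, I would establish tightness of $\{\mathrm{Law}(\xi^N)\}_N$ in a topology suitable for passing to the limit. Testing the weak form against $\phi\in C^\infty(\mathbb T^2)$ and applying the uniform $L^\infty_tL^2_x$-bound, Burkholder--Davis--Gundy inequality for the jump martingale, and bounds on the drift and the compensator (the latter using a second-order Taylor expansion of $\phi\circ\varphi^{k,z}_1-\phi-z\theta_k\sigma_k\cdot\nabla\phi$ in $z$, which is integrable against $z^2\nu(dz)$), I would verify an Aldous-type estimate $\mathbb E|\langle\xi^N(\tau+\delta),\phi\rangle-\langle\xi^N(\tau),\phi\rangle|^2\le C_\phi\,\delta$ uniformly over stopping times $\tau$. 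Together with the $L^\infty_tL^2_x$-bound, this yields tightness in $D([0,T];H^{-\varepsilon}(\mathbb T^2))$ for any $\varepsilon>0$, and, combined with Banach--Alaoglu, tightness with respect to the weak-$\ast$ topology of $L^\infty([0,T];L^2)$ as well.

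I would then invoke Jakubowski's extension of the Skorohod representation theorem (needed because the weak-$\ast$ topology on $L^\infty_tL^2_x$ is not metrizable) to pass to a new probability space on which, along a subsequence, $\tilde\xi^N\to\tilde\xi$ almost surely in the appropriate joint topology, together with a convergent copy of the compensated Poisson random measures $\tilde N_k$. Passage to the limit in the nonlinear drift $\int_0^t\langle\xi^N,u^N\cdot\nabla\phi\rangle\,ds$ is classical for 2D Euler in this scale: the Biot--Savart convolution $K$ is compact from $L^2$ to $C^{1-\varepsilon}$, so $u^N\to u$ strongly enough for the bilinear term. The martingale part passes to the limit by martingale identification (moment control plus uniqueness of characteristics).

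The main obstacle is the compensator term
\begin{equation*}
\sum_k\int_0^t\!\int_{|z|\le 1}\bigl\langle \xi^N(s),\,\Phi^{N,k}_z(\phi)-\phi-z\theta_k\sigma_k\cdot\nabla\phi\bigr\rangle\,\nu(dz)\,ds,
\end{equation*}
where $\Phi^{N,k}_z(\phi)$ denotes the Galerkin Marcus map applied to $\phi$, i.e.\ the exponential $e^{z\theta_k P_N(\sigma_k\cdot\nabla)}P_N\phi$, and which must be identified in the limit with $\phi\circ\varphi_1^{k,z}$. The crux is to prove that for fixed smooth $\phi$, $\Phi^{N,k}_z(\phi)\to\phi\circ\varphi_1^{k,z}$ strongly in $L^2(\mathbb T^2)$ as $N\to\infty$, uniformly for $|z|\le 1$. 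The strategy is a Trotter--Kato-type argument: since $P_N(\sigma_k\cdot\nabla)P_N\phi\to\sigma_k\cdot\nabla\phi$ in $L^2$ for smooth $\phi$, the semigroups generated by these skew-adjoint operators converge strongly, uniformly on $|z|\le 1$. The second-order vanishing at $z=0$ guaranteed by the corrector $z\theta_k\sigma_k\cdot\nabla\phi$ provides a $z^2$-majorant integrable against $\nu(dz)$, so dominated convergence closes the passage to the limit. Combined with the a.s. weak-$\ast$ convergence of $\tilde\xi^N$, this identifies the limit $\tilde\xi$ as a weak solution in the sense of Definition \ref{def-weak-solution}, with $\|\tilde\xi(t)\|_{L^2}\le\|\xi_0\|_{L^2}$ inherited from the lower semicontinuity of the norm.
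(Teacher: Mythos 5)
Your proposal is sound and shares the paper's skeleton --- Galerkin approximation with the skew-adjointness energy bound, Aldous-type estimates and tightness, a Skorohod representation, and a limit passage whose crux is the strong $L^2(\T^2)$-convergence of the Galerkin Marcus exponentials ${\rm e}^{z\theta_k A_k^n}\Pi_n\phi\to\phi\circ\varphi_1^{k,z}$ with a $z^2$-majorant integrable against $\nu$ --- but several implementation choices differ, and it is worth recording what each buys. For tightness the paper verifies the Aldous condition for the $H^{-\beta}$-valued processes with $\beta>3$ (the mode-wise increment bounds grow like $\theta_k^2 z^2 |l|^2$, so their squares are summable against $|l|^{-2\beta}$ only for $\beta>3$), applies the ordinary Skorohod theorem on the Polish space $D([0,T];H^{-\beta}(\T^2)\times\R^{\Z^2_0})$, and then upgrades the almost sure convergence to $D([0,T];H^{-\varepsilon}(\T^2))$ by interpolation with the uniform $L^2$ bound (Propositions \ref{aldous-condition} and \ref{qianrun}); your scalar Aldous estimate for $\langle\xi^N,\phi\rangle$ does not by itself control $H^{-\varepsilon}$-increments for small $\varepsilon$, so you must either route it through Jakubowski's tightness criterion or take the paper's $\beta>3$ detour, while Jakubowski's representation theorem for the weak-$\ast$ topology is not needed at all in the paper's setup. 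Your Trotter--Kato argument for the Marcus maps is morally the paper's Lemma \ref{C.1} in Appendix \ref{third}, which gets the same strong convergence from norm conservation plus weak-$\ast$ compactness and uniqueness for the limiting transport equation; pointwise convergence in $z$ together with the $z^2$ domination suffices, so the uniformity in $|z|\le 1$ you ask for is not required. The step you compress into ``martingale identification'' is where the paper works hardest: after representation both the integrand and the driving compensated Poisson measures depend on $n$, and the paper truncates the sum over $|k|\le L$ versus $|k|>L$ (using that $\sum_{|k|>L}\theta_k^2$ is small), uses the Marcus-map convergence for the integrand, and invokes a Skorohod--Situ lemma on convergence of Poisson integrals when the L\'evy drivers converge in $D([0,T];\R)$, after first removing the jumps with $|z|\le\delta$; a martingale-problem identification could replace this, but it still has to identify the limiting Poisson random measure, so some version of that argument is unavoidable and should be spelled out. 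Finally, a small slip: the Biot--Savart map is compact from $L^2$ into $H^{1-\varepsilon}(\T^2)$, not into $C^{1-\varepsilon}$ (in two dimensions $H^1$ does not embed into continuous functions), and the $H^{1-\varepsilon}$ statement is exactly what your bilinear-term argument needs.
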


As in Subsection \ref{subsubsec-transport-eq}, we are concerned with the scaling limit of a sequence of stochastic 2D Euler equations, where the coefficients $\theta^n$ satisfies Hypotheses \ref{hypo-coefficients}. That is, we consider
\begin{equation}\label{yilie2D}
\begin{cases}
d{\xi^n} + u^n \cdot \nabla \xi^n\, dt + \sum_{k \in \mathbb{Z}_0^2} \theta_k^n \sigma_k \cdot \nabla \xi^n \diamond dZ_t^k = 0, \\
\xi^n(0) = \xi_0.
\end{cases}
\end{equation}
By Theorem \ref{thm:weak_solution}, for each $n \geq 1$, there exists a weak solution $\xi^n$ to  \eqref{yilie2D} defined on some probability space $(\Omega,\mathcal{F},\mathbb{P})$ such that
\begin{equation}\label{estimate-2}
    \mathbb{P}\text{-a.s.}\quad \sup_{t \in [0,T]} \|\xi^n(t)\|_{L^2} \leq \|\xi_0\|_{L^2}.
\end{equation}
We remark that the solutions are possibly defined on different probability spaces, but for simplicity of notation, we will not distinguish $\Omega, \P$ and $\E$. For each $n \geq 1$, for any test function $ \phi \in C^{\infty}(\mathbb{T}^{2})$, $\mathbb{P}$-a.s. for all $t \in [0, T]$, we have
\begin{equation}\label{scaling equ}
\begin{aligned}
\langle \xi^{n}(t), \phi \rangle &= \langle \xi_{0}, \phi \rangle + \int_{0}^{t} \langle  \xi^{n}(s), u^{n}(s) \cdot \nabla\phi \rangle\, ds \\
&\quad + \sum_{k\in \mathbb{Z}_0^2} \int_{0}^{t}\! \int_{|z| \leq 1}  \big\langle \xi^n(s-), \phi\big(\varphi_{k,z}^n \big) -\phi \big\rangle \, \tilde N_{k}^n(dz,ds)\\
&\quad + \sum_{k\in \mathbb{Z}_0^2} \int_{0}^{t}\! \int_{|z| \leq 1} \big\langle \xi^{n}(s), \phi\big(\varphi_{k,z}^n\big) -\phi -  z\theta^n_k \sigma_{k} \cdot\nabla \phi \big\rangle\, \,\nu(dz)ds.
\end{aligned}
\end{equation}
Here, $u^n=K*\xi^n$ is the  velocity field and $\varphi_{k,z}^n$ is defined as in \eqref{flow} by replacing $\theta_k$ with $\theta_k^n$.

Now we can state the second main result of the paper.

\begin{theorem}\label{2d-scal}
Let $Q^n$ denote the law of $\xi^n,\, n\ge 1$, then the family $\{Q^n\}_{n \geq 1}$ is tight and converges weakly to $Q$, which is support on the unique solution to the 2D Navier-Stokes equation:
$$\langle\xi(t),\phi\rangle = \langle\xi_0,\phi\rangle +\int_0^t \langle \xi(s),u(s)\cdot \nabla \phi\rangle \, ds+ \kappa \int_0^t \langle\xi(s), \Delta \phi\rangle\, ds, $$
where $\kappa$ is the constant as in \eqref{limit-heat-eq} in the case $d=2$.
\end{theorem}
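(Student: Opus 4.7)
The plan is to follow the classical Skorohod--Jakubowski scheme adapted to jump noise, paralleling (and re-using) the scaling-limit machinery already set up for the linear equation in Theorem \ref{main sca}. First, I would prove tightness of $\{Q^n\}_{n\ge 1}$ in a suitable path space; then extract an almost surely convergent subsequence on a new probability space via Skorohod--Jakubowski representation; next identify the limit by passing term by term in \eqref{scaling equ}, showing it satisfies the weak form of the 2D Navier--Stokes equation; and finally invoke uniqueness of 2D NSE with $L^2$ initial vorticity to upgrade subsequential convergence to full convergence of $Q^n$ to $Q$.

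\textbf{Tightness and limit identification.} The uniform bound \eqref{estimate-2} gives weak-$\ast$ compactness in $L^\infty([0,T];L^2(\T^2))$ automatically, so the real work is tightness in $D([0,T];H^{-\varepsilon}(\T^2))$ for some small $\varepsilon>0$. I would bound the temporal increment of each of the four terms in \eqref{scaling equ} uniformly in $n$: the drift piece has $H^{-1}$-Lipschitz-in-time increments from $\|u^n\|_{L^2}\lesssim\|\xi^n\|_{L^2}\le \|\xi_0\|_{L^2}$; the compensated jump martingale has second moment $\lesssim \sum_{k}(\theta^n_k)^2\int z^2\nu(dz)\cdot|t-s|=O(|t-s|)$ via BDG combined with the first-order Taylor bound $|\phi(\varphi_1^{k,z})-\phi|\lesssim |z\theta^n_k|\|\nabla\phi\|_\infty$; and the nonlocal corrector admits a similar $O(|t-s|)$ bound using second-order Taylor and the pointwise identity \eqref{eq-important-equality}. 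Aldous' criterion then yields tightness. After Skorohod--Jakubowski, I may assume a.s.\ convergence $\xi^n\to\xi$ strongly in $L^2([0,T];H^{-\varepsilon})$ and weak-$\ast$ in $L^\infty([0,T];L^2)$; the Biot--Savart law $u^n=K\ast\xi^n$ together with Rellich upgrades this to $u^n\to u$ strongly in $L^2([0,T];L^2)$, sufficient to pass to the limit in the nonlinear transport term $\langle\xi^n,u^n\cdot\nabla\phi\rangle$.

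\textbf{Pure-jump terms and main obstacle.} For the martingale term, I exploit the fact that $\sigma_k\cdot\nabla\sigma_k\equiv 0$ forces the Marcus flow to be the straight line $\varphi_1^{k,z}(x)=x+z\theta^n_k\sigma_k(x)$, so that Taylor expansion of $\phi(\varphi_1^{k,z})-\phi$ is explicit; its $L^2(\Omega)$-norm is then bounded by $\|\theta^n\|_{\ell^\infty}^2\cdot C(\phi,\xi_0)$ via the pointwise identity \eqref{eq-important-equality} with $\|\theta^n\|_{\ell^2}=1$, and hence vanishes as $n\to\infty$ by Hypotheses \ref{hypo-coefficients}(3). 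For the nonlocal corrector, the second-order Taylor expansion
\[
\phi(\varphi_1^{k,z}(x))-\phi(x)-z\theta^n_k\sigma_k(x)\cdot\nabla\phi(x)=\tfrac12 (z\theta^n_k)^2 (\sigma_k\otimes\sigma_k)(x):\nabla^2\phi(x)+R_{k,z}^n(x),
\]
combined with the pointwise identity, contributes at leading order exactly $\kappa\int_0^t\langle\xi^n(s),\Delta\phi\rangle ds$ with $\kappa=C_2\int_{|z|\le 1}z^2\,\nu(dz)$, while the remainder $R_{k,z}^n$ contributes $O(\|\theta^n\|_{\ell^\infty})$ after summation. Passing these limits, the a.s.\ limit $\xi$ satisfies the weak NSE formulation; uniqueness for 2D NSE with $L^2$ vorticity pins down $Q$. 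The main obstacle is the simultaneous treatment of the nonlinear drift---which demands strong compactness of $u^n$---with the careful uniform-in-$(n,k,z)$ control of the Marcus corrector needed to ensure that the limiting dissipation is the local Laplacian $\kappa\Delta$ rather than a nonlocal fractional operator; it is the divergence-free straight-line structure of the flows, together with the identity \eqref{eq-important-equality}, that makes this local limit possible despite the discontinuous driving noise.
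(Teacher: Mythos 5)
Your overall route is the same as the paper's (tightness via Aldous, Skorohod representation, term-by-term identification of the limit with the corrector producing $\kappa\Delta$ through the second-order Taylor expansion of $\phi(\varphi^{n}_{k,z})$ along the straight-line flows and the identity \eqref{eq-important-equality}, the martingale vanishing because of $\|\theta^n\|_{\ell^\infty}\to0$, and uniqueness of the 2D Navier--Stokes equation to get convergence of the whole sequence). The limit-identification part is essentially Lemmas \ref{nonlinear-con}, \ref{2d-levy} and \ref{2d-m} of the paper, and is fine (minor quibble: for the martingale term what is actually used is Bessel's inequality for the orthonormal family $\{\sigma_k\}$, i.e.\ $\sum_k\langle\xi^n\nabla\phi,\sigma_k\rangle^2\le\|\xi^n\nabla\phi\|_{L^2}^2$, rather than \eqref{eq-important-equality}).

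The genuine gap is in the tightness step as you state it: you claim Aldous-type increment bounds of order $O(|t-s|)$ directly in $H^{-\varepsilon}(\T^2)$ for \emph{small} $\varepsilon>0$. This cannot work for the drift: $u^n\cdot\nabla\xi^n=\nabla\cdot(u^n\xi^n)$ is only uniformly bounded in $H^{-\beta}$ for $\beta>3/2$ (indeed you yourself describe the drift increments as ``$H^{-1}$-Lipschitz''), so its time increments are not small in $H^{-\varepsilon}$; likewise, turning your test-function bounds for the jump and corrector terms into $H^{-\beta}$-norm bounds requires expanding against the basis $e_l$, which produces factors $|l|$, $|l|^2$ and forces $\beta>3$ for summability (this is exactly Proposition \ref{prop-aldous-condition}). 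The missing idea is the two-step structure of the paper: prove the Aldous condition and tightness in the much weaker space $D([0,T];H^{-\beta})$, $\beta>3$, apply Skorohod there, and only afterwards upgrade the $\overline\P$-a.s.\ convergence to $D([0,T];H^{-\varepsilon})$ for every small $\varepsilon>0$ by interpolating with the uniform bound $\sup_t\|\xi^n(t)\|_{L^2}\le\|\xi_0\|_{L^2}$ (Proposition \ref{qianrun}, Lemma \ref{neicha}). This upgrade is not cosmetic: it is precisely what gives the strong convergence $u^n\to u$ in $L^1([0,T];L^2)$ via Biot--Savart that your (and the paper's) treatment of the nonlinear term $\langle\xi^n,u^n\cdot\nabla\phi\rangle$ relies on. With that correction, the rest of your argument goes through as in the paper.
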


We finish this section by describing the organization of the paper. We will make some preparations in Section \ref{sec-prelim} by recalling useful properties of the flow $\varphi^{k,i}_t$ and Sobolev spaces, as well as basic facts of the Skorohod space. Section 3 is devoted to proving Theorem \ref{main sca}, while Sections 4 and 5 contain the proofs of Theorems \ref{thm:weak_solution} and \ref{2d-scal}, respectively. Finally, we collect some useful materials in the appendix.

\section{Preliminaries}\label{sec-prelim}

This section consists of some preparatory materials. Section \ref{Marcus} presents a useful identity for the flows defined in \eqref{characteristics} and \eqref{flow}, as well as two frequently used properties of Sobolev spaces. Then Section \ref{space D} recalls some basic facts about the Skorohod space $D([0,T];S)$, including the characterization of compactness subsets.

\subsection{Properties of the flow $\varphi^{k,i}_t$ and Sobolev spaces}\label{Marcus}

Let $\{\varphi^{k,i}_t\}_{t\ge 0}$ be the flow of measure-preserving diffeomorphisms generated by \eqref{characteristics}. It verifies the following nice property:
  \begin{equation}\label{important-property}
  \sigma_{k,i}(\varphi^{k,i}_t(x))= \sigma_{k,i}(x), \quad  x\in \T^d,\, k\in \Z^d_0, i\in \{1,\ldots,d-1 \}.
  \end{equation}
Indeed, recalling \eqref{ek} and \eqref{sigma_k,i}, wee see that $\sigma_{k,i}(x) \cdot \nabla \sigma_{k,i}^j(x)=0$ holds for any $j=1,\ldots, d$ and $k\in \Z^d_0,\, i=1,\ldots, d-1$. Then,
  $$\frac{d}{dt} \sigma_{k,i}(\varphi^{k,i}_t(x))= \frac{d}{dt}\varphi^{k,i}_t(x) \cdot (\nabla\sigma_{k,i}) (\varphi^{k,i}_t(x))= - z \theta_k (\sigma_{k,i} \cdot \nabla\sigma_{k,i})(\varphi^{k,i}_t(x)) =0, $$
which leads to \eqref{important-property}. Thanks to \eqref{important-property}, one has
  \begin{equation}\label{flow-expression}
  \varphi^{k,i}_t(x) = x- tz \theta_k \sigma_{k,i}(x), \quad x\in \T^d, k\in \Z^d_0, i\in \{1,\ldots, d-1\}.
  \end{equation}
For the stochastic 2D Euler equation, recalling \eqref{flow}, we note that
 \begin{equation}\label{flow-expression-2d}
  \varphi^{k,z}_t(x) = x+ tz \theta_k \sigma_{k}(x), \quad x\in \T^2, k\in \Z^2_0.
  \end{equation}
These two identities will play an important role in the subsequent sections.

Next we recall two frequently used properties of Sobolev spaces $H^s(\T^d)$; the first one is concerned with an interpolation inequality.

\begin{lemma} \label{neicha}
Let \( s, t \in \mathbb{R} \). Then for any \( \theta \in (0, 1) \), we have
\[
\| f\|_{H^{\theta s + (1 - \theta) t}(\mathbb{T}^d)} \leq \| f \|_{H^s(\mathbb{T}^d)}^\theta \| f \|_{H^t(\mathbb{T}^d)}^{1 - \theta}, \quad \forall f \in H^s(\mathbb{T}^d) \cap H^t(\mathbb{T}^d).
\]
\end{lemma}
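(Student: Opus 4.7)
The plan is to use the Fourier characterization of the Sobolev norms on $\T^d$ together with H\"older's inequality applied termwise to the spectral decomposition. Recall that for any $r\in\R$ and any zero-average $f\in H^r(\T^d)$,
\[
\|f\|_{H^r(\T^d)}^2 = \sum_{k\in\Z^d_0} (1+|k|^2)^r |\hat f(k)|^2,
\]
where $\{\hat f(k)\}_{k\in\Z^d_0}$ are the Fourier coefficients of $f$. This reduces the inequality to a pointwise estimate on the summands followed by a summation argument.

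First I would write, for each fixed $k$, the identity
\[
(1+|k|^2)^{\theta s + (1-\theta)t}\, |\hat f(k)|^2 = \bigl[(1+|k|^2)^s |\hat f(k)|^2\bigr]^{\theta} \bigl[(1+|k|^2)^t |\hat f(k)|^2\bigr]^{1-\theta},
\]
obtained by splitting the exponent on $(1+|k|^2)$ and writing $|\hat f(k)|^2 = |\hat f(k)|^{2\theta}|\hat f(k)|^{2(1-\theta)}$. Next I would sum over $k\in\Z^d_0$ and apply H\"older's inequality in $\ell^1(\Z^d_0)$ with conjugate exponents $p=1/\theta$ and $q=1/(1-\theta)$. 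This yields
\[
\|f\|_{H^{\theta s + (1-\theta)t}}^2 \le \Bigl(\sum_{k} (1+|k|^2)^s |\hat f(k)|^2\Bigr)^{\theta} \Bigl(\sum_{k} (1+|k|^2)^t |\hat f(k)|^2\Bigr)^{1-\theta} = \|f\|_{H^s}^{2\theta}\, \|f\|_{H^t}^{2(1-\theta)}.
\]
Taking square roots of both sides delivers the desired interpolation bound.

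This is a completely classical fact and I do not expect any genuine obstacle; the only care needed is that $\theta\in(0,1)$ makes $(1/\theta, 1/(1-\theta))$ a legitimate conjugate pair for H\"older, and that the right-hand side is finite precisely when $f\in H^s\cap H^t$, which is the standing hypothesis. If one prefers to avoid the case $|k|=0$, working with zero-mean functions (as assumed throughout the paper) permits the equivalent characterization $\|f\|_{H^r}^2 = \sum_{k\in\Z^d_0}|k|^{2r}|\hat f(k)|^2$, and the same H\"older argument applies verbatim.
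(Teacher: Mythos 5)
Your proof is correct and is exactly the classical spectral argument (Fourier characterization plus H\"older with exponents $1/\theta$ and $1/(1-\theta)$); the paper itself states this lemma as a standard fact without proof, so there is nothing to diverge from. The remark that for zero-mean functions the equivalent norm $\sum_{k\in\Z^d_0}|k|^{2r}|\hat f(k)|^2$ works verbatim is a sensible touch consistent with the paper's conventions.
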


The following lemma from \cite{fourier} provides a key estimate for products of functions in Sobolev spaces.

\begin{lemma}\label{fenkai}
Let \( s_1, s_2 < \frac{d}{2} \) with \( s_1 + s_2 > 0 \). Then for any \( f \in H^{s_1}(\mathbb{T}^d) \) and \( g \in H^{s_2}(\mathbb{T}^d) \), the product \( fg \) belongs to \( H^{s_1 + s_2 - \frac{d}{2}}(\mathbb{T}^d) \) with
\begin{equation*}
\| fg \|_{H^{s_1 + s_2 - \frac{d}{2}}(\mathbb{T}^d)} \le C\| f \|_{H^{s_1}(\mathbb{T}^d)} \| g \|_{H^{s_2}(\mathbb{T}^d)}.
\end{equation*}
\end{lemma}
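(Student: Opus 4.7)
The plan is to Fourier-analyse the product via a Littlewood--Paley/paraproduct decomposition on $\T^d$. Writing $\langle m\rangle = (1+|m|^2)^{1/2}$ and setting $s:=s_1+s_2-d/2$, the identity $\widehat{fg}_m=\sum_{k+l=m}\hat f_k\hat g_l$ reduces the task to the weighted $\ell^2$ estimate $\sum_m\langle m\rangle^{2s}|\widehat{fg}_m|^2\lesssim\|f\|_{H^{s_1}}^2\|g\|_{H^{s_2}}^2$. First I would fix a dyadic partition of unity and define the Littlewood--Paley blocks $\Delta_jf$ and the low-frequency cut-offs $S_jf:=\sum_{l\le j}\Delta_lf$ in the usual way, so that $\|f\|_{H^\sigma}^2\sim\sum_j 2^{2j\sigma}\|\Delta_jf\|_{L^2}^2$ and Bernstein's inequality $\|\Delta_jf\|_{L^\infty}\lesssim 2^{jd/2}\|\Delta_jf\|_{L^2}$ is available. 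I would then apply Bony's paraproduct splitting
\[
fg = T_fg+T_gf+R(f,g),\quad T_fg:=\sum_j S_{j-1}f\cdot\Delta_jg,\quad R(f,g):=\sum_{|j-j'|\le 1}\Delta_jf\cdot\Delta_{j'}g,
\]
and estimate the three pieces in $H^s$ separately.

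For the paraproduct $T_fg$ I would combine Bernstein with Cauchy--Schwarz to derive the pointwise bound
\[
\|S_{j-1}f\|_{L^\infty}\le\sum_{l<j-1}\|\Delta_lf\|_{L^\infty}\lesssim\sum_{l<j-1}2^{l(d/2-s_1)}\cdot 2^{ls_1}\|\Delta_lf\|_{L^2}\lesssim 2^{j(d/2-s_1)}\|f\|_{H^{s_1}},
\]
where the geometric series in $l$ converges precisely under the hypothesis $s_1<d/2$. Since $S_{j-1}f\cdot\Delta_jg$ is spectrally localised near $2^j$, summing over $j$ with the weight $2^{2js}$ gives
\[
\|T_fg\|_{H^s}^2\lesssim\|f\|_{H^{s_1}}^2\sum_j 2^{2j(s+d/2-s_1)}\|\Delta_jg\|_{L^2}^2=\|f\|_{H^{s_1}}^2\sum_j 2^{2js_2}\|\Delta_jg\|_{L^2}^2\sim\|f\|_{H^{s_1}}^2\|g\|_{H^{s_2}}^2,
\]
by the choice $s=s_1+s_2-d/2$. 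The symmetric term $T_gf$ is handled identically, invoking the hypothesis $s_2<d/2$.

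The main obstacle is the remainder $R(f,g)$, since each pair $\Delta_jf\cdot\Delta_{j'}g$ with $|j-j'|\le 1$ has Fourier support in the full ball $\{|\xi|\lesssim 2^j\}$, so arbitrarily high input frequencies may feed arbitrarily low output frequencies--in contrast to the paraproducts where input and output frequencies are comparable. To close the estimate I would couple the H\"older bound $\|\Delta_jf\cdot\tilde\Delta_jg\|_{L^1}\le\|\Delta_jf\|_{L^2}\|\tilde\Delta_jg\|_{L^2}$ with Bernstein's inequality $\|\Delta_nu\|_{L^2}\lesssim 2^{nd/2}\|\Delta_nu\|_{L^1}$ applied to the output localisation at frequency $2^n$, which additionally forces $j\ge n-C$ in the sum; setting $\alpha_j:=2^{js_1}\|\Delta_jf\|_{L^2}$, $\beta_j:=2^{js_2}\|\Delta_jg\|_{L^2}$ and $r:=s_1+s_2$, this yields
\[
\|R(f,g)\|_{H^s}^2\lesssim\sum_n 2^{2nr}\Big(\sum_{j\ge n-C}2^{-jr}\alpha_j\beta_j\Big)^2.
\]
A change of index $k=j-n$ rewrites the inner quantity as a discrete convolution with the kernel $(2^{-kr}\mathbf{1}_{k\ge -C})_k$, and Young's convolution inequality $\ell^2\ast\ell^1\hookrightarrow\ell^2$, combined with $\sum_j\alpha_j\beta_j\le\|\alpha\|_{\ell^2}\|\beta\|_{\ell^2}$, bounds the right-hand side by $\|f\|_{H^{s_1}}^2\|g\|_{H^{s_2}}^2$ exactly when the hypothesis $r>0$ makes the kernel summable in $\ell^2$. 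Combining the three estimates yields the claim, and the three hypotheses $s_1<d/2$, $s_2<d/2$, $s_1+s_2>0$ appear naturally as the sharp conditions for convergence of the three geometric series that arise in the argument.
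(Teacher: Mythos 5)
Your proof is correct: the Bony splitting is set up properly, the three pieces are estimated with the right spectral-localisation facts (annulus support for the two paraproducts, ball support plus the $L^1\to L^2$ Bernstein inequality for the remainder), and the hypotheses $s_1<\frac d2$, $s_2<\frac d2$, $s_1+s_2>0$ enter exactly where you say they do. The paper does not prove this lemma at all --- it quotes it from the cited reference (Bahouri--Chemin--Danchin) --- and your argument is precisely the standard paraproduct proof given there, so there is nothing further to compare.
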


\subsection{Skorohod spaces $D([0,T];S)$}\label{space D} \label{subs-Skorohod-space}

This section focuses on the Skorohod space $D([0,T];S)$, where $S$ is a Polish space equipped with a metric $\rho$. To begin with, we introduce the basic settings in the Skorohod space $D([0,T];S)$, following main reference \cite{Dspace}.

Define $\Lambda'$ as the collection of strictly increasing continuous functions $\lambda: [0,T] \to [0,T]$ satisfying $\lambda(0) = 0$, $\lambda(T) = T$. Let $\Lambda$ denote the subset of Lipschitz continuous $\lambda \in \Lambda'$ such that
\begin{equation*}
\gamma(\lambda) := \underset{t\in [0,T]}{\mathrm{ess\,sup}}\, |\log \lambda'(t)|  = \sup_{s > t} \left| \log \frac{\lambda(s) - \lambda(t)}{s - t} \right| < \infty.
\end{equation*}
For $x, y \in D([0,T];S)$, the metric is given by
\[
d(x, y) = \inf_{\lambda \in \Lambda} \left[ \gamma(\lambda) \vee \int_0^T {\rm e}^{-u} d(x, y, \lambda, u)  du \right],
\]
where
\[
d(x, y, \lambda, u) = \sup_{t \in [0,T]} q(x(t \wedge u), y(\lambda(t) \wedge u)), \quad q = \rho \wedge 1.
\]
The topology on \( D([0,T];S) \) induced by the metric \( d \) is called  Skorohod topology. The following proposition gives a characterization of convergence in $D([0,T];S)$ equipped with the Skorohod topology, as shown in \cite[Proposition 5.2]{Dspace}.

\begin{proposition}\label{D-con-0}
  Let \( \{x_n\} \subset D([0,T];S) \) and \( x \in D([0,T];S) \). Then \( \lim_{n \to \infty} d(x_n, x) = 0 \) if and only if there exists \( \{\lambda_n\} \subset \Lambda \) such that $\lim_{n \to \infty} \gamma(\lambda_n) = 0$ holds and
\[
\lim_{n \to \infty} d(x_n, x, \lambda_n, u) = 0 \quad \text{for all continuity points } u \text{ of } x.
\]
In particular, \( \lim_{n \to \infty} d(x_n, x) = 0 \) implies that \( \lim_{n \to \infty} x_n(u) = \lim_{n \to \infty} x_n(u-) = x(u) \) for all continuity points \( u \) of \( x \).
\end{proposition}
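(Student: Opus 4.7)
The plan is to prove the two implications separately, and then deduce the ``In particular'' statement at the end from the forward direction.

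For the forward implication ($\Rightarrow$), assume $d(x_n,x)\to 0$ and, for each $n$, pick $\lambda_n\in\Lambda$ with
\[
\gamma(\lambda_n)\vee \int_0^T {\rm e}^{-u}\, d(x_n,x,\lambda_n,u)\,du \;\le\; d(x_n,x)+\frac{1}{n},
\]
so both $\gamma(\lambda_n)\to 0$ and the weighted integral tends to zero. Since $q\le 1$, the integrand is uniformly bounded by $1$; integral convergence plus a standard subsequence/diagonal extraction then yields $d(x_n,x,\lambda_n,u)\to 0$ for a.e.\ $u\in[0,T]$. To upgrade this to every continuity point $u_0$ of $x$, I would compare $d(x_n,x,\lambda_n,u_0)$ with $d(x_n,x,\lambda_n,u)$ for $u$ in a small neighborhood of $u_0$: only those $t$ with $t\wedge u_0\ne t\wedge u$ or $\lambda_n(t)\wedge u_0\ne \lambda_n(t)\wedge u$ contribute to the difference, and on this set the values of $x$ lie close to $x(u_0)$ by continuity, while the $x_n$-values can be absorbed into the sup taken at $u$. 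This should yield an oscillation estimate of the form
\[
\bigl|d(x_n,x,\lambda_n,u_0)-d(x_n,x,\lambda_n,u)\bigr| \;\le\; \omega_{x,u_0}\bigl(|u-u_0|+{\rm e}^{\gamma(\lambda_n)}-1\bigr),
\]
with $\omega_{x,u_0}$ a local modulus of continuity of $x$ at $u_0$. Letting $u\to u_0$ through points of a.e.\ convergence then gives $d(x_n,x,\lambda_n,u_0)\to 0$.

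For the reverse implication ($\Leftarrow$), the given $\{\lambda_n\}$ already controls the first argument of the $\max$ via $\gamma(\lambda_n)\to 0$. For the integral term, the integrand $d(x_n,x,\lambda_n,u)$ is dominated by $1$ and, by hypothesis, tends to $0$ at every continuity point of $x$. Since $x$ is c\`adl\`ag, its set of discontinuities on $[0,T]$ is at most countable and hence Lebesgue-negligible, so dominated convergence yields $\int_0^T {\rm e}^{-u}\, d(x_n,x,\lambda_n,u)\,du\to 0$. Combining the two bounds in the $\max$ gives $d(x_n,x)\le \gamma(\lambda_n)\vee \int_0^T {\rm e}^{-u} d(x_n,x,\lambda_n,u)\,du \to 0$.

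For the ``In particular'' consequence, fix a continuity point $u_0$ of $x$ and extract $\lambda_n$ via the forward direction so that $\gamma(\lambda_n)\to 0$ and $d(x_n,x,\lambda_n,u_0)\to 0$. Taking $t=u_0$ inside the sup defining $d(x_n,x,\lambda_n,u_0)$ gives
\[
q\bigl(x_n(u_0),\,x(\lambda_n(u_0)\wedge u_0)\bigr)\;\longrightarrow\;0.
\]
The Lipschitz bound $|\log \lambda_n'|\le \gamma(\lambda_n)$ integrated from $0$ forces $\|\lambda_n-\mathrm{id}\|_\infty\to 0$, so $\lambda_n(u_0)\wedge u_0\to u_0$; continuity of $x$ at $u_0$ then yields $x_n(u_0)\to x(u_0)$. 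For $x_n(u_0-)$, apply the same estimate along a sequence $t_m\uparrow u_0$, $t_m<u_0$, obtaining $q(x_n(t_m),x(\lambda_n(t_m)\wedge u_0))\to 0$ uniformly in $m$ large enough, and let $t_m\uparrow u_0$, again invoking continuity of $x$ at $u_0$ to identify the limit as $x(u_0)$.

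The main obstacle will be the upgrade from a.e.\ $u$-convergence to convergence at every continuity point of $x$ in the forward direction: $u\mapsto d(x_n,x,\lambda_n,u)$ need neither be monotone nor continuous, so the crux of the argument is the oscillation estimate, obtained by carefully splitting the defining sup over $t$ into the regions where the truncations $t\wedge u$ and $\lambda_n(t)\wedge u$ are or are not active, and exploiting that $\lambda_n\to\mathrm{id}$ uniformly to make the switching region arbitrarily short.
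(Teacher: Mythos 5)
The paper does not actually prove this proposition: it is quoted from \cite[Proposition 5.2]{Dspace} (Ethier--Kurtz), so there is no in-paper argument to compare against. Your proof is the standard textbook route, and the reverse implication (countability of the discontinuity set plus dominated convergence) and the ``in particular'' part (specializing $t=u_0$, resp.\ $t=t_m\uparrow u_0$, in the supremum and using $\|\lambda_n-\mathrm{id}\|_\infty\le T({\rm e}^{\gamma(\lambda_n)}-1)\to0$) are correct as sketched.

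The forward direction, however, has a flawed key step. The two-sided oscillation estimate $|d(x_n,x,\lambda_n,u_0)-d(x_n,x,\lambda_n,u)|\le\omega_{x,u_0}(\cdots)$ is false: take $S=\R$, $x\equiv0$, $\lambda_n=\mathrm{id}$ and $x_n=\mathbf{1}_{[(u_0+u)/2,\,T]}$ with $u>u_0$; then $d(x_n,x,\mathrm{id},u_0)=0$ while $d(x_n,x,\mathrm{id},u)=1$, yet $\omega_{x,u_0}\equiv0$. Only the one-sided bound $d(x_n,x,\lambda_n,u_0)\le d(x_n,x,\lambda_n,u)+\mathrm{osc}\big(x;[u_0-\delta_n,\,u+\delta_n]\big)$ for $u\ge u_0$ (with $\delta_n=\|\lambda_n-\mathrm{id}\|_\infty$) is available, because a first-argument value $x_n(t\wedge u_0)$ can be matched with a term of the $u$-supremum only when the truncation level is \emph{enlarged}; the values of $x_n$ on the switching set are not controlled by the continuity of $x$. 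Since one only needs to approach $u_0$ from the right, this is a local fix, but it exposes a real boundary issue: at $u_0=T$ there is no room to the right, and with the truncated metric as defined on $[0,T]$ the forward implication genuinely fails there (take $x\equiv0$ and $x_n=\mathbf{1}_{[T-1/n,\,T]}$: then $d(x_n,x)\le 1/n$, yet $d(x_n,x,\lambda,T)=1$ for every $\lambda$). This is an artifact of transcribing the Ethier--Kurtz statement from $[0,\infty)$ to $[0,T]$ rather than a defect of your method, but your argument cannot close that case. Finally, $\int_0^T{\rm e}^{-u}d(x_n,x,\lambda_n,u)\,du\to0$ yields a.e.\ convergence only along subsequences; to conclude for the full sequence at a fixed $u_0<T$ you should either run the sub-subsequence argument, or use convergence in measure to pick, for each large $n$, some $u_n\in(u_0,u_0+\eta)$ with $d(x_n,x,\lambda_n,u_n)<\varepsilon$ and insert it into the one-sided estimate. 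With these repairs your proof coincides with the cited one away from the endpoint.
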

The above characterization yields equivalent conditions for convergence in the Skorohod topology, see \cite[Proposition 5.3]{Dspace}.
\begin{proposition}
Let \( \{x_n\} \subset D([0,T];S) \) and \( x \in D([0,T];S) \), the following are equivalent:
\begin{enumerate}
    \item[(a)] \( \lim_{n \to \infty} d(x_n, x) = 0 \).
    \item[(b)] There exists \( \{\lambda_n\} \subset \Lambda \) such that
$\lim_{n \to \infty} \gamma(\lambda_n) = 0 $ holds and
    \begin{equation}\label{r-converge}
        \lim_{n \to \infty} \sup_{0 \leq t \leq T} \rho(x_n(t), x(\lambda_n(t))) = 0.
    \end{equation}
    \item[(c)] There exists \( \{\lambda_n\} \subset \Lambda' \) such that $\lim_{n \to \infty} \sup_{0 \leq t \leq T} |\lambda_n(t) - t| = 0 $ and \eqref{r-converge} holds.
\end{enumerate}
\end{proposition}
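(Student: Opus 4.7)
The plan is to establish the cycle $(b) \Rightarrow (a)$, $(a) \Rightarrow (b)$, $(b) \Rightarrow (c)$, $(c) \Rightarrow (b)$, leveraging Proposition \ref{D-con-0} to bridge between abstract convergence in the metric $d$ and concrete uniform estimates. Throughout I work with the definitions of $\gamma(\lambda)$, $d(x,y,\lambda,u)$, and $d(x,y)$ recalled in the preceding section, exploiting the cadlag property of $x$ and the boundedness $q = \rho \wedge 1 \leq 1$.

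For $(b) \Rightarrow (a)$, I would insert the given $\lambda_n \in \Lambda$ directly into the definition of the Skorohod metric. The inequality
\[
d(x_n, x, \lambda_n, u) \leq \sup_{t \in [0,T]} q\bigl(x_n(t), x(\lambda_n(t))\bigr)
\]
tends to zero uniformly in $u$ by \eqref{r-converge}, so dominated convergence yields $\int_0^T e^{-u} d(x_n, x, \lambda_n, u)\,du \to 0$. Combined with $\gamma(\lambda_n) \to 0$, this controls the infimum defining $d(x_n,x)$ by a null sequence. For $(a) \Rightarrow (b)$, Proposition \ref{D-con-0} supplies $\lambda_n \in \Lambda$ with $\gamma(\lambda_n) \to 0$ and $d(x_n, x, \lambda_n, u) \to 0$ at every continuity point $u$ of $x$. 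To upgrade this to the uniform bound \eqref{r-converge}, I would fix $\varepsilon > 0$, choose a continuity point $u_\varepsilon$ close enough to $T$ so that $\rho(x(s), x(T)) < \varepsilon$ on $[u_\varepsilon, T]$ (possible since $x$ has at most countably many discontinuities), and use $\gamma(\lambda_n) \to 0$, which forces $\sup_t |\lambda_n(t) - t| \to 0$, to handle the tail $t \in (u_\varepsilon, T]$ simultaneously for $x_n(t)$ and $x(\lambda_n(t))$.

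The implication $(b) \Rightarrow (c)$ is essentially immediate from $\Lambda \subset \Lambda'$: the identity $\lambda_n(t) - t = \int_0^t (\lambda_n'(s) - 1)\,ds$ and the pointwise a.e.\ bound $|\lambda_n'(s) - 1| \leq e^{\gamma(\lambda_n)} - 1$ yield $\sup_t |\lambda_n(t) - t| \to 0$. The main obstacle, and the most delicate direction, is $(c) \Rightarrow (b)$: given arbitrary $\lambda_n \in \Lambda'$ (not necessarily Lipschitz) satisfying (c), one must construct new time changes $\mu_n \in \Lambda$ with $\gamma(\mu_n) \to 0$ while preserving the uniform $\rho$-convergence. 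My approach is to approximate $\lambda_n$ by a piecewise linear interpolant on a finite partition $0 = t_0 < t_1 < \cdots < t_m = T$ adapted to $x$: using the cadlag property, I would pick the interior nodes $t_j$ as continuity points of $x$ such that the oscillation of $x$ on each $[t_{j-1}, t_j)$ is below a prescribed $\varepsilon$. Defining $\mu_n$ to agree with $\lambda_n$ at the nodes and interpolate linearly between them, the slopes $(\lambda_n(t_j) - \lambda_n(t_{j-1}))/(t_j - t_{j-1})$ converge to $1$ by (c), which forces $\gamma(\mu_n) \to 0$ and hence $\mu_n \in \Lambda$ for large $n$. The remaining bound $\sup_t \rho\bigl(x(\lambda_n(t)), x(\mu_n(t))\bigr) \to 0$ follows by combining the oscillation control of $x$ on each subinterval with the fact that, for large $n$, $\lambda_n(t)$ and $\mu_n(t)$ lie close to the same subinterval as $t$. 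The technical subtlety lies in handling $t$ near a node $t_j$, where $\lambda_n(t)$ and $\mu_n(t)$ may straddle a potential discontinuity of $x$; choosing the $t_j$ at continuity points of $x$ neutralizes this issue and closes the argument.
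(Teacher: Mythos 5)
First, a point of comparison: the paper offers no proof of this proposition at all — it is quoted from Ethier--Kurtz \cite{Dspace} (Proposition 5.3 there) — so the only benchmark is the textbook argument, whose architecture (the cycle through $(a)$, $(b)$, $(c)$ with Proposition \ref{D-con-0} as the bridge and a piecewise-linear regularization of the time changes for $(c)\Rightarrow(b)$) your proposal correctly reproduces; your $(b)\Rightarrow(c)$ is complete. However, three of your concrete steps would fail as written. In $(b)\Rightarrow(a)$, the inequality $d(x_n,x,\lambda_n,u)\le\sup_{t}q(x_n(t),x(\lambda_n(t)))$ is false: the pair $(t\wedge u,\ \lambda_n(t)\wedge u)$ in the definition is in general not of the form $(s,\lambda_n(s))$ (for $t>u$ with $\lambda_n(t)>u$ the summand is $q(x_n(u),x(u))$). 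The correct bound carries the additional term $q\bigl(x(\lambda_n(t\wedge u)),x(\lambda_n(t)\wedge u)\bigr)$, whose two time arguments both lie within $\sup_s|\lambda_n(s)-s|$ of $u$; this extra term tends to $0$ only at continuity points $u$ of $x$, and one then concludes by dominated convergence in the $du$-integral — so the implication survives, but not for the reason you give.

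The more serious gaps are in $(a)\Rightarrow(b)$ and $(c)\Rightarrow(b)$. For $(a)\Rightarrow(b)$: if $x$ jumps at $T$, no continuity point $u_\varepsilon$ satisfies $\rho(x(s),x(T))<\varepsilon$ on $[u_\varepsilon,T]$ (countability of the discontinuity set does not exclude a discontinuity at $T$ itself); worse, $d(x_n,x,\lambda_n,u)$ only involves $x_n(t\wedge u)$, i.e.\ it freezes $x_n$ at time $u$, so it gives no control whatsoever on $x_n(t)$ for $t\in(u_\varepsilon,T]$, and your tail estimate has nothing to work with. On the half-line this direction is easy because one may pick a continuity point $u>T$; on $[0,T]$ the endpoint genuinely requires a separate construction matching the large jumps of $x$ near $T$ (using, e.g., that Skorohod convergence on $[0,T]$ forces $x_n(T)\to x(T)$). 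For $(c)\Rightarrow(b)$: an oscillation-$<\varepsilon$ partition of a c\`adl\`ag path must place every jump of size at least $\varepsilon$ at a node, so the nodes cannot be chosen among the continuity points of $x$; moreover the interpolation nodes should live in the range of $\lambda_n$ rather than its domain — defining $\mu_n$ to agree with $\lambda_n$ at the points $\lambda_n^{-1}(s_j)$ for an oscillation partition $\{s_j\}$ of $x$ guarantees that $\lambda_n(t)$ and $\mu_n(t)$ always lie in the same oscillation interval, whereas with fixed domain nodes $t_j$ the image interval $[\lambda_n(t_{j-1}),\lambda_n(t_j))$ may straddle a large jump of $x$, and your proposed fix via continuity of $x$ at $t_j$ does not address this. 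With these repairs the proof closes, but as written the argument has genuine holes in two of the four implications.
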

It is worth mentioning that condition (c) is typically more convenient for verifying convergence of sequences in $D([0,T];S)$.

Our analysis of tightness in $D([0,T];S)$ begins with the concept of modulus of continuity, which serves as a key tool for characterizing compact subsets.
\begin{definition}
 Let $f \in D([0,T];S)$ and let $\delta > 0$ be given. A modulus of $f$ is defined by
\[
\omega_{[0,T],S}(f, \delta) := \inf_{\Pi_{\delta}} \max_{i} \sup_{t_i \leq t < s \leq t_{i+1} \leq T} \rho(f(t), f(s)),
\]
where $\Pi_{\delta}$ is the set of all increasing sequences $ \{0 = t_0 < t_1 < \dots < t_n = T\}$ with the property $t_{i+1} - t_i \geq \delta$, $i = 0,1,\dots, n - 1$.
\end{definition}
Building upon this definition, we now state a complete characterization of relatively compact sets in the Skorohod topology, see \cite[Lemma 7.2]{Metivier}.
\begin{lemma}\label{compact1}
A set $\mathcal{K} \subset D([0,T];S)$ has compact closure if and only if it satisfies the following conditions:
\begin{enumerate}
    \item[(1)] There exists a dense subset $I \subset [0,T]$ such that for every $t \in I$ the set $\{f(t): f \in \mathcal{K}\}$ has compact closure in $S$;
    \item[(2)] $\lim_{\delta \to 0} \sup_{f \in \mathcal{K}} \omega_{[0,T],S}(f, \delta) = 0$.
\end{enumerate}
\end{lemma}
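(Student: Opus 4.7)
The proof naturally splits into necessity (if $\overline{\mathcal{K}}$ is compact then (1) and (2) hold) and sufficiency. For necessity, I would first handle (2): cover $\overline{\mathcal{K}}$ by finitely many $\epsilon$-balls $B(g_j, \epsilon)$ in the Skorohod metric; since each $g_j \in D([0,T];S)$ individually satisfies $\omega_{[0,T],S}(g_j, \delta) \to 0$ as $\delta \to 0$, and since a small Skorohod distance between $f$ and $g_j$ produces a reparameterization that essentially preserves any admissible partition, a triangle-type estimate $\omega_{[0,T],S}(f, \delta) \le 2\epsilon + \omega_{[0,T],S}(g_j, \delta)$ transfers the uniform vanishing to all of $\overline{\mathcal{K}}$. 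For (1), I would exploit separability of the compact metric space $\overline{\mathcal{K}}$: pick a countable dense $\{f_m\} \subset \overline{\mathcal{K}}$, note that each discontinuity set $D(f_m)$ is countable, so $I := [0,T] \setminus \bigcup_m D(f_m)$ is dense; given any $g_n \in \overline{\mathcal{K}}$, extract a Skorohod-convergent subsequence $g_{n_k} \to g$, and combine the uniform modulus bound from (2) near $t \in I$ with pointwise convergence at the continuity points of nearby $f_m$ via Proposition \ref{D-con-0} to show $\{g(t) : g \in \overline{\mathcal{K}}\}$ is totally bounded.

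The heart of the argument is sufficiency. Given $\{f_n\} \subset \mathcal{K}$, the plan is to build a Skorohod-convergent subsequence. For each integer $m \ge 1$, condition (2) provides $\delta_m \downarrow 0$ such that $\omega_{[0,T],S}(f_n, \delta_m) < 1/m$ uniformly in $n$; for each pair $(n,m)$ select a witnessing partition $0 = t^{n,m}_0 < \cdots < t^{n,m}_{r(n,m)} = T$ with consecutive spacing at least $\delta_m$ on which the oscillation of $f_n$ is below $1/m$. Since the number of partition points is bounded by $T/\delta_m + 1$, Bolzano--Weierstrass plus a diagonal argument over $m$ lets me pass to a subsequence along which, for every $m$, the partitions converge to a limit partition $\{t^m_i\}_i$. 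Next I fix a countable set $\{u_\ell\} \subset I$, dense in $[0,T]$ and populating each interior interval $(t^m_i, t^m_{i+1})$ for all $m$; by condition (1) and a further diagonal extraction, arrange that $f_n(u_\ell)$ converges in $S$ for every $\ell$. This defines the pointwise limit at the $u_\ell$, and, using the cross-piece oscillation bound $1/m$, a candidate c\`adl\`ag function $f \in D([0,T];S)$ by matching values on each maximal piece.

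To conclude $f_n \to f$ in the Skorohod metric I would use condition (c) of the proposition immediately preceding Lemma \ref{compact1}: construct $\lambda_n \in \Lambda'$ by mapping each limit partition point $t^m_i$ to the approximating $t^{n,m}_i$ and interpolating linearly in between. Then $\sup_t |\lambda_n(t) - t| \to 0$ because the partition points converge, and for any $t$ belonging to the piece $[t^m_i, t^m_{i+1})$, choosing an anchor $u_\ell$ inside this piece gives
\[
\rho(f_n(t), f(\lambda_n(t))) \le \rho(f_n(t), f_n(u_\ell)) + \rho(f_n(u_\ell), f(u_\ell)) + \rho(f(u_\ell), f(\lambda_n(t))) \le \tfrac{2}{m} + \rho(f_n(u_\ell), f(u_\ell)),
\]
where the two oscillation terms are controlled by (2) on $f_n$ and by construction on $f$. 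Sending $n \to \infty$ for fixed $m$ and then $m \to \infty$ yields the required uniform convergence.

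The main obstacle, and the step I expect to be the most technical, is the simultaneous bookkeeping in sufficiency: aligning countably many partitions of different meshes through iterated diagonal extractions, ensuring that the candidate $f$ is unambiguously c\`adl\`ag at each limit partition point $t^m_i$ (which must match across different $m$), and verifying that the $\lambda_n$ genuinely lie in $\Lambda'$ with uniform control of $\sup_t |\lambda_n(t) - t|$ and no degenerate slopes. These subtleties are precisely what the Skorohod-adapted modulus $\omega_{[0,T],S}$ is engineered to accommodate, in contrast to the usual continuity modulus on $C([0,T];S)$.
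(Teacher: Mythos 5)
This lemma is not proved in the paper at all: it is quoted verbatim from \cite[Lemma 7.2]{Metivier} (it is the classical Arzel\`a--Ascoli-type characterization of relative compactness in $D([0,T];S)$, cf. also Ethier--Kurtz, Theorem 3.6.3). So there is no in-paper argument to compare against; what can be assessed is whether your sketch is a viable proof, and on the whole it is -- it follows the standard textbook strategy, and the points you flag as technical (aligning the partitions across $m$, defining the limit at the partition points, checking $\lambda_n\in\Lambda'$) are indeed where the real work lies.

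Two remarks. First, in the necessity of (1) your argument is more roundabout than needed and, as literally written, has a soft spot: after extracting $g_{n_k}\to g$ in the Skorohod topology you cannot conclude $g_{n_k}(t)\to g(t)$ unless $t$ is a continuity point of the \emph{limit} $g$, and membership of $t$ in your set $I$ (continuity points of a fixed countable dense family $\{f_m\}$) does not guarantee this. The gap is reparable using the already-established condition (2): for each $k$ the witnessing partition forces $g_{n_k}(t)$ to lie within $\epsilon$ of $g_{n_k}(u)$ for some continuity point $u$ of $g$ within $\delta/2$ of $t$ on one side, after which a diagonal extraction gives a Cauchy subsequence. Simpler still, compactness of $\overline{\mathcal K}$ already yields compact containment of $\{f(t):f\in\mathcal K,\ t\in[0,T]\}$ for \emph{all} $t$, since the range of a single c\`adl\`ag function on $[0,T]$ is relatively compact and a finite $\epsilon$-net of $\overline{\mathcal K}$ in the Skorohod metric places every value of every $f$ within $\epsilon$ of the union of finitely many such ranges; the dense-set bookkeeping is then unnecessary. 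Second, for sufficiency a cleaner and less error-prone route than constructing the limit function and the time changes by hand is to prove total boundedness of $\mathcal K$ (approximate each $f$ by a step function whose jump times lie on a finite grid refining the witnessing partition and whose values are taken from finite $\epsilon$-nets of the compact sets supplied by (1)) and invoke completeness of the metric $d$ defined in Section~\ref{subs-Skorohod-space}; this sidesteps the delicate matching of values across the pieces. If you do keep your route, note that with $\lambda_n(t_i^m)=t_i^{n,m}$ the displayed triangle inequality should be applied to $\rho\bigl(f_n(t),f(\lambda_n^{-1}(t))\bigr)$ so that $t$ sits in a piece of the partition of $f_n$ while $\lambda_n^{-1}(t)$ and the anchor $u_\ell$ sit in the corresponding limit piece; this is immaterial since $\Lambda'$ is closed under inversion, but the oscillation bounds are applied to the wrong function otherwise.
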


While Lemma \ref{compact1} provides a theoretical basis for proving tightness of stochastic processes with jumps, we also need the following Aldous condition introduced in \cite{Adlous} to control the modulus.

\begin{definition}
We say a sequence of $S$-valued c\`adl\`ag stochastic processes $(X_n)_{n \in \mathbb{N}}$ satisfies the \textbf{Aldous condition} if for given $\varepsilon > 0$, there exists $\delta>0$ such that
\[
 \lim_{n\to\infty} \mathbb{P} \big\{ \rho (X_n(\tau_n + \theta), X_n(\tau_n)) \geq \varepsilon \big\}=0
\]
for each $0 \le \theta \le \delta $ and every sequence $(\tau_n)_{n \in \mathbb{N}}$ of $\mathcal{F}_t$-stopping times with $\tau_n +\delta\leq T$.
\end{definition}

To verify the Aldous condition in practice, we present the following sufficient condition adapted from \cite[Lemma 9]{NS}.

\begin{lemma}\label{adlous condition1}
Let $(X_n)_{n \in \mathbb{N}}$ be a sequence of $S$-valued c\`adl\`ag stochastic processes. The sequence $(X_n)_{n \in \mathbb{N}}$ satisfies the Aldous condition in the space $S$ if there exist constants $\alpha, \beta > 0$ and $C > 0$ such that for all $n \in \mathbb{N}$, all $\theta \geq 0$, and all sequences $(\tau_n)_{n \in \mathbb{N}}$ of $\mathcal{F}_t$-stopping times with $\tau_n + \theta \leq T$, it holds
\[
\mathbb{E} \big[ \rho( X_n(\tau_n + \theta), X_n(\tau_n) )^{\alpha} \big] \leq C \theta^{\beta}.
\]
\end{lemma}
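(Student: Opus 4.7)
The plan is to derive the Aldous condition directly from the moment bound via Markov's inequality, which is the only nontrivial tool needed. Given $\varepsilon > 0$, the goal is to produce a single $\delta > 0$ such that the probability in the Aldous condition vanishes (or can be made arbitrarily small uniformly in $n$) for every $\theta \in [0, \delta]$ and every sequence of $(\mathcal{F}_t)$-stopping times $\tau_n$ with $\tau_n + \theta \le T$.

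First I would fix an arbitrary $\varepsilon > 0$ and let $(\tau_n)$ be any such sequence of stopping times. By Markov's inequality applied with exponent $\alpha > 0$, one has
\[
\mathbb{P}\bigl\{\rho\bigl(X_n(\tau_n + \theta), X_n(\tau_n)\bigr) \ge \varepsilon\bigr\}
\le \frac{\mathbb{E}\bigl[\rho\bigl(X_n(\tau_n+\theta), X_n(\tau_n)\bigr)^{\alpha}\bigr]}{\varepsilon^{\alpha}}.
\]
The hypothesis of the lemma then yields
\[
\mathbb{P}\bigl\{\rho\bigl(X_n(\tau_n + \theta), X_n(\tau_n)\bigr) \ge \varepsilon\bigr\}
\le \frac{C\, \theta^{\beta}}{\varepsilon^{\alpha}}
\le \frac{C\, \delta^{\beta}}{\varepsilon^{\alpha}}
\]
uniformly in $n \in \mathbb{N}$, in $\theta \in [0,\delta]$, and in the choice of stopping times $\tau_n$.

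Given any target precision $\eta > 0$, it then suffices to take $\delta := \bigl(\eta \varepsilon^{\alpha}/C\bigr)^{1/\beta}$, for which the right-hand side is at most $\eta$. This uniform control in $n$ is in particular stronger than the limit statement in the definition of the Aldous condition, so $\limsup_{n\to\infty} \mathbb{P}\{\rho(X_n(\tau_n+\theta), X_n(\tau_n)) \ge \varepsilon\} \le \eta$, and sending $\eta \to 0$ (by shrinking $\delta$) gives the desired vanishing. The argument is a one-line estimate, so there is no substantive obstacle; the only mild subtlety is that the moment bound is assumed to hold for every stopping time and every $\theta$, so no measurability or strong Markov argument is needed beyond what the hypothesis already provides.
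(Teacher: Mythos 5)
The paper does not actually prove this lemma; it is imported verbatim from \cite[Lemma 9]{NS}, and the proof there is exactly the Markov/Chebyshev one-liner you give, so your approach is the standard and intended one. One caveat is worth flagging, though it concerns the paper's phrasing rather than your estimate. Your argument yields, for each $\eta>0$, a $\delta=\delta(\eta,\varepsilon)$ with $\sup_n \mathbb{P}\{\rho(X_n(\tau_n+\theta),X_n(\tau_n))\ge\varepsilon\}\le C\delta^\beta/\varepsilon^\alpha\le\eta$ for all $\theta\le\delta$ --- i.e.\ the usual two-parameter $(\varepsilon,\eta,\delta)$ form of the Aldous condition used in \cite{Adlous, NS, Metivier}. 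It does \emph{not} produce a single $\delta$ for which $\lim_{n\to\infty}\mathbb{P}\{\cdot\}=0$ for each fixed $\theta\in(0,\delta]$, which is what Definition 2.7 of the paper literally asks for; indeed no such implication can hold (take $X_n$ equal to a fixed Brownian motion for all $n$: the moment bound holds with $\alpha=2$, $\beta=1$, yet the probability is a fixed positive constant independent of $n$). So your closing step ``sending $\eta\to 0$ by shrinking $\delta$'' cannot be upgraded to the limit statement, and your earlier remark that uniform control in $n$ is ``stronger than the limit statement'' is not accurate. The resolution is that the paper's Definition 2.7 is a misstatement of the standard Aldous condition; against the correct definition your proof is complete.
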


The following lemma (cf. \cite[Lemma 7]{NS}) captures the essential connection between the Aldous condition and the modulus of continuity, providing the key to tightness for c\`adl\`ag processes in Section~\ref{com-tight}.

\begin{lemma}\label{compact-Aldous-1}
Assume that a sequence of $S$-valued c\`adl\`ag stochastic processes $(X_n)_{n \in \mathbb{N}}$ satisfies the Aldous condition. Let $\mathbb{P}_n$ be the law of $X_n$ on $D([0,T];S)$, $n\in \mathbb{N}$. Then for every $\varepsilon > 0$ there exists a subset $A_{\varepsilon} \subset D([0,T];S)$ such that
$$\inf_{n \in \mathbb{N}} \mathbb{P}_n(A_{\varepsilon}) \geq 1 - \varepsilon$$
and
$$\lim_{\delta \to 0} \sup_{f \in A_{\varepsilon}} \omega_{[0,T],S}(f, \delta) = 0.$$
\end{lemma}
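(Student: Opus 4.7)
My plan is to adapt the tightness--compactness--identification scheme used in \cite{FGL21} for Brownian transport noise to the present Marcus/jump setting, exploiting the closed form \eqref{flow-expression-2d} of the flow to pass to the limit in the compensator.

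\textbf{Step 1 (tightness and Skorohod representation).} I would work on the path space $\mathcal{Z} := L^\infty_{w\ast}([0,T]; L^2(\T^2)) \cap D([0,T]; H^{-\varepsilon}(\T^2))$ for some small $\varepsilon>0$. The uniform bound \eqref{estimate-2} yields tightness in the first factor. For the Skorohod factor I would apply Lemma \ref{compact-Aldous-1}, verifying the Aldous condition separately on the four terms of \eqref{scaling equ}. The transport and compensator pieces are of finite variation, and their increments in $H^{-1}$ are controlled using $\|\xi^n(s)\|_{L^2}\le \|\xi_0\|_{L^2}$; for the martingale piece, its quadratic variation is bounded, via the Taylor estimate $|\phi(\varphi^{k,z,n}_1) - \phi| \le C|z\theta^n_k|\,\|\nabla\phi\|_\infty$ and a Bessel-type bound $\sum_k (\theta^n_k)^2 |\langle \xi^n, \sigma_k \cdot \nabla\phi\rangle|^2 \le C\|\xi^n\|_{L^2}^2\|\nabla\phi\|_\infty^2$, by $C\|\nabla\phi\|_\infty^2\,\theta$ uniformly in $n$, which is enough for the Aldous condition of Lemma \ref{adlous condition1}. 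Once tightness is in hand I would invoke Jakubowski's extension of the Skorohod theorem, applied jointly to $(\xi^n, \{N^n_k\})$, to obtain on a new probability space a.s.\ convergent copies $(\bar\xi^n, \{\bar N^n_k\}) \to (\bar\xi, \{\bar N_k\})$ with the original laws.

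\textbf{Step 2 (passage to the limit in \eqref{scaling equ}).} The uniform bound on $\bar u^n = K\ast \bar\xi^n$ in $L^\infty_t H^1_x$, combined with a.s.\ convergence of $\bar\xi^n$ in $D([0,T]; H^{-\varepsilon})$ and Lemma \ref{fenkai}, lets me pass the nonlinear term to $\int_0^t\langle \bar\xi(s), \bar u(s)\cdot\nabla\phi\rangle\,ds$. The martingale term vanishes in $L^2(\bar\Omega)$: by the sharper Bessel bound its quadratic variation is dominated by $C\|\nabla\phi\|_\infty^2\,\|\theta^n\|_{\ell^\infty}^2\,\|\xi_0\|_{L^2}^2\int_{|z|\le 1}z^2\nu(dz)\,t$, which tends to $0$ by Hypotheses \ref{hypo-coefficients}(3). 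The crucial step is the compensator. Using \eqref{flow-expression-2d} and a second-order Taylor expansion gives
\begin{equation*}
\phi(\varphi^{k,z,n}_1(x)) - \phi(x) - z\theta^n_k \sigma_k(x)\cdot\nabla\phi(x) = \tfrac12\,z^2(\theta^n_k)^2\,\sigma_k(x)^{\otimes 2}{:}\,\nabla^2\phi(x) + R^n_k(z,x),
\end{equation*}
with $|R^n_k(z,x)|\le C|z\theta^n_k|^3\|\phi\|_{C^3}$. Summing over $k$ and applying the identity \eqref{eq-important-equality} at $d=2$ as a pointwise identity in $x$ yields $\sum_k(\theta^n_k)^2\,\sigma_k(x)^{\otimes 2}{:}\,\nabla^2\phi(x) = 2C_2\,\Delta\phi(x)$; integrating the prefactor $\tfrac12 z^2$ against $\nu(dz)$ gives exactly $\kappa = C_2\int_{|z|\le 1}z^2\,\nu(dz)$. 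The remainder contributes at most $C\|\phi\|_{C^3}\,\|\theta^n\|_{\ell^\infty}\int_{|z|\le 1}|z|^3\,\nu(dz)\,t \to 0$, using $\sum_k |\theta^n_k|^3 \le \|\theta^n\|_{\ell^\infty}\|\theta^n\|_{\ell^2}^2 = \|\theta^n\|_{\ell^\infty}$. Combined with a.s.\ convergence of $\bar\xi^n$ in the weak-$\ast$ topology of $L^\infty_t L^2_x$, the compensator converges to $\kappa\int_0^t\langle\bar\xi(s),\Delta\phi\rangle\,ds$.

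\textbf{Step 3 (uniqueness, conclusion, and the main obstacle).} Assembling the three identifications shows that $\bar\xi$ is an $L^\infty_t L^2_x$ weak solution of the deterministic 2D Navier-Stokes vorticity equation. By classical uniqueness for $L^2$-vorticity solutions of 2D Navier-Stokes, $\bar\xi$ coincides with the unique such solution; hence the full sequence $Q^n$ converges weakly to the Dirac-type law $Q$ supported on this solution, and since $Q$ is deterministic the weak convergence upgrades to convergence in probability. The \textbf{main obstacle} is the compensator analysis in Step 2: one cannot pass to the limit after pairing $\bar\xi^n$ with the individual test object $\sigma_k^{\otimes 2}{:}\,\nabla^2\phi$, because no simple limit exists term by term; instead the $k$-summation must be interchanged with the $L^2$-pairing, and \eqref{eq-important-equality} applied pointwise in $x$ before integrating against $\bar\xi^n$. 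This interchange is legitimate since only finitely many $\theta^n_k$ are nonzero for each fixed $n$, but it is precisely this algebraic structure---absent for a generic L\'evy transport noise---that produces the standard Laplacian rather than a fractional operator in the limit. Secondary difficulties include ensuring the Aldous estimate is uniform in $n$ despite the growing number of active frequencies, and keeping track of the càdlàg $H^{-\varepsilon}$ regularity versus the weak-$\ast$ $L^\infty_t L^2_x$ bound when treating the quadratic nonlinearity.
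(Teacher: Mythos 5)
Your proposal does not prove the statement at hand. Lemma \ref{compact-Aldous-1} is a purely abstract result about $S$-valued c\`adl\`ag processes: assuming only the Aldous condition, one must produce, for each $\varepsilon>0$, a single set $A_\varepsilon\subset D([0,T];S)$ with $\inf_n \mathbb{P}_n(A_\varepsilon)\ge 1-\varepsilon$ on which the modulus $\omega_{[0,T],S}(\cdot,\delta)$ tends to $0$ uniformly as $\delta\to 0$. What you have written instead is a sketch of the scaling-limit argument for the stochastic 2D Euler equations (essentially Theorem \ref{2d-scal}): tightness of $\xi^n$, Skorohod representation, passage to the limit in the nonlinear, martingale and compensator terms, and identification with the Navier--Stokes solution. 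Indeed, your Step 1 \emph{invokes} Lemma \ref{compact-Aldous-1} as a known tool, which makes the argument circular as a proof of that lemma. Nothing in your text addresses the actual content: how the Aldous condition (a statement about increments $X_n(\tau_n+\theta)-X_n(\tau_n)$ at stopping times) controls the Skorohod modulus $\omega_{[0,T],S}(f,\delta)$, which is defined via an infimum over partitions with mesh at least $\delta$.

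For the record, the paper does not prove this lemma either; it is quoted from the literature (Motyl, \cite[Lemma 7]{NS}, going back to Aldous's tightness criterion). A self-contained proof would proceed roughly as follows: from the Aldous condition one first deduces, for every $\eta>0$, that $\lim_{\delta\to 0}\limsup_{n}\mathbb{P}\big(\omega_{[0,T],S}(X_n,\delta)\ge \eta\big)=0$ (this is the nontrivial step, requiring the construction of a random partition from successive exit times and a union bound over the stopping times so produced); one then chooses $\delta_j\downarrow 0$ with $\sup_n\mathbb{P}\big(\omega_{[0,T],S}(X_n,\delta_j)\ge 1/j\big)\le \varepsilon 2^{-j}$ and sets $A_\varepsilon=\bigcap_j\{f:\omega_{[0,T],S}(f,\delta_j)<1/j\}$. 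None of these ingredients appear in your proposal, so as a proof of the stated lemma it is entirely missing.
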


\section{Stochastic transport equations driven by L\'evy noises}\label{sect:ste}

In this part we consider the stochastic transport equation \eqref{STE-Marcus} understood in the Marcus sense. We shall first provide an existence theorem for $L^2(\mathbb{T}^d)$-initial condition, then we will prove the scaling limit result stated in Theorem \ref{main sca}.

First, we recall the weak formulation \eqref{STE-Marcus-2} of \eqref{STE-Marcus}: for $\phi\in C^\infty(\T^d)$,
 \begin{equation} \label{STE-weak}
 \begin{aligned}
  \langle\xi(t),\phi\rangle &= \langle\xi_0,\phi\rangle + \sum_{k,i} \int_0^t \int_{|z|\le 1} \big\langle \xi(s-), \phi\big(\varphi^{k,i}_1 \big) -\phi \big\rangle \, \tilde N_{k,i}(dz,ds) \\
  &+ \sum_{k,i} \int_0^t \int_{|z|\le 1} \big\langle \xi(s), \phi\big(\varphi^{k,i}_1 \big) -\phi + z \theta_k\sigma_{k,i} \cdot\nabla \phi \big\rangle\, \,\nu(dz)ds.
  \end{aligned}
\end{equation}
Assuming $\xi\in L^2\big(\Omega, L^2\big([0,T];L^2(\mathbb{T}^d)\big)\big)$, one can show that all the terms in the above equation make sense. In fact, the stochastic integral is a square integrable martingale since
\begin{equation*}
    \begin{aligned}
     I_1&:= \mathbb{E}\bigg(\sum_{k,i} \int_0^t \int_{|z|\le 1} \big< \xi(s-), \phi\big(\varphi^{k,i}_1 \big) -\phi \big> \, \tilde N_{k,i}(dz,ds)\bigg)^2 \\&=\mathbb{E}\bigg(\sum_{k,i} \int_0^t \int_{|z|\le 1} \big< \xi(s), \phi\big(\varphi^{k,i}_1 \big) -\phi \big> ^2\,\nu(dz)ds\bigg)\\
      &\leq \mathbb{E}\bigg(\sum_{k,i} \int_0^t \int_{|z|\le 1}  \|\xi(s)\|^2_{L^2} \big\|\phi\big(\varphi^{k,i}_1 \big) -\phi \big\|^2_{L^2}\,\nu(dz)ds\bigg).
    \end{aligned}
\end{equation*}
By the mean value theorem and \eqref{flow-expression}, we have
\begin{equation}\label{Taylor-1}
    \phi\big(\varphi^{k,i}_1(x) \big) -\phi(x)=\nabla\phi(\zeta_{k,i}(x)) \cdot (\varphi^{k,i}_1(x)-x)=-z \theta_k \sigma_{k,i}(x)\cdot\nabla\phi(\zeta_{k,i}(x))
\end{equation}
for some $\zeta_{k,i}(x)$ belonging to the segment connecting $x$ and $\varphi_1^{k,i}(x)$. Then
\begin{equation}\label{I1}
\begin{aligned}
      I_1&\leq \mathbb{E}\bigg(\sum_{k,i} \int_0^t \int_{|z|\le 1}  \|\xi(s)\|^2_{L^2} \|z \theta_k \sigma_{k,i}\cdot\nabla\phi(\zeta_{k,i}) \|^2_{L^2}\,\nu(dz)ds\bigg)\\
      &\leq (d-1)\|\theta\|_{\ell^2}^2\|\nabla\phi\|_{L^\infty}^2
    \mathbb{E}\bigg(  \int_0^T \|\xi(s)\|^2_{L^2}\, ds\bigg)  \int_{|z| \leq 1} z^2\,\nu(dz).
      \end{aligned}
\end{equation}
Here we also used the fact that $\{\sigma_{k,i}\}$ forms an orthonormal system in $L^2(\mathbb{T}^d)$. Thus we obtain that $I_1<\infty$. Similarly, the last term on the right-hand side of \eqref{STE-weak} is well-defined, cf. \eqref{12} in Theorem~\ref{cunzaixing} below.

Now we can give the definition of solutions to \eqref{STE-weak}.

\begin{definition}\label{def1}
    Let $(\Omega,\mathcal{F},\mathcal{F}_t,\mathbb{P})$ be the filtered probability space defined as before.
    We say that \eqref{STE-weak} has a probabilistically strong solution if there exist an $\mathcal{F}_t$ progressively measurable process $\xi\in L^2\big(\Omega,L^2\big([0,T];L^2(\mathbb{T}^d)\big)\big)$  such that for any $\phi\in C^\infty(\mathbb{T}^d)$, the equality \eqref{STE-weak} holds $\mathbb{P}$-a.s. for all $t\in[0,T]$.
\end{definition}

Note that the solution is strong in the probabilistic sense but weak in the analytic sense.

\subsection{Existence of  solutions}\label{subs-STE-existence}
This section is devoted to proving the existence of solutions to \eqref{STE-Marcus} in the sense of Definition \ref{def1}. Inspired by \cite{HP23}, we have

\begin{theorem}\label{cunzaixing}
For any $\xi_0\in L^2(\mathbb{T}^d)$, there exists at least one probabilistic strong solution to \eqref{STE-Marcus} with trajectories in $L^\infty([0,T];L^2(\mathbb{T}^d))$ which is given explicitly by
$$ \xi(t) = \xi_0\big(X_t^{-1}\big), $$
where $X_t: \mathbb{T}^d \to \mathbb{T}^d$ is the stochastic flow of diffeomorphisms generated by the Marcus equation
\begin{equation*}
    dX_t = \sum_{k,i} \theta_k \sigma_{k,i}\big(X_t\big) \diamond dZ_t^{k,i}.
\end{equation*}
Moreover, $X_t$ preserves the Lebesgue measure on $\mathbb{T}^d$, which immediately implies that $\xi$ preserves the $L^2(\mathbb{T}^d)$-norm:  $\|\xi(t,\cdot)\|_{L^2} = \|\xi_0\|_{L^2}$ $\mathbb{P}$-almost surely for all $t \in [0,T]$.
\end{theorem}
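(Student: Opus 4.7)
The plan is to construct the solution explicitly by composition with the stochastic flow and then verify the weak formulation, first for smooth initial data and then by $L^2$-approximation.

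First I would invoke the results of \cite{HP23}: since the vector fields $\theta_k \sigma_{k,i}$ are smooth, bounded, divergence-free, and only finitely many are nonzero, the Marcus SDE $dX_t = \sum_{k,i} \theta_k \sigma_{k,i}(X_t) \diamond dZ^{k,i}_t$ generates a stochastic flow of $C^\infty$-diffeomorphisms $\{X_t\}_{t \in [0,T]}$ on $\T^d$. The divergence-free property of each $\sigma_{k,i}$ together with the Marcus chain rule implies that $X_t$ preserves Lebesgue measure; in particular $\|\xi_0 \circ X_t^{-1}\|_{L^2} = \|\xi_0\|_{L^2}$ for any $\xi_0 \in L^2(\T^d)$, which gives both the $L^\infty([0,T];L^2)$-regularity and the conservation of norm.

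For smooth initial data $\xi_0 \in C^\infty(\T^d)$, I would apply the Marcus-It\^o formula (essentially \cite[Proposition 4.2]{KPP95} extended to the flow setting) to the composition $\xi(t,x) := \xi_0(X_t^{-1}(x))$. This yields a strong form of \eqref{STE-Marcus} pointwise in $x$; testing against $\phi \in C^\infty(\T^d)$ and using the identity \eqref{guanxi-0} between the exponential operator and the inverse flow then reproduces exactly the weak formulation \eqref{STE-weak}. This step is essentially the representation theorem of \cite{HP23}.

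For general $\xi_0 \in L^2(\T^d)$, I would approximate by a sequence $\{\xi_0^n\} \subset C^\infty(\T^d)$ with $\xi_0^n \to \xi_0$ in $L^2$, and set $\xi^n(t) := \xi_0^n \circ X_t^{-1}$. Measure preservation yields the pathwise isometry
\begin{equation*}
\|\xi^n(t) - \xi(t)\|_{L^2} = \|\xi_0^n - \xi_0\|_{L^2},
\end{equation*}
so $\xi^n(t) \to \xi(t) := \xi_0 \circ X_t^{-1}$ in $L^2(\T^d)$, uniformly in $t$ and $\omega$. Each $\xi^n$ satisfies the weak formulation with $\xi_0^n$ in place of $\xi_0$; passing to the limit, the deterministic bracket terms are immediate. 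For the compensated stochastic integral, It\^o isometry combined with the estimate \eqref{I1} (with $\xi$ replaced by $\xi^n - \xi$) produces an $L^2(\Omega)$-bound proportional to $\|\nabla\phi\|_{L^\infty}^2 \|\theta\|_{\ell^2}^2 T \|\xi_0^n - \xi_0\|_{L^2}^2 \int_{|z|\le 1} z^2 \nu(dz)$, which vanishes. The $\nu(dz)ds$-integral is handled analogously after using a second-order Taylor expansion $\phi(\varphi_1^{k,i}(x)) - \phi(x) + z\theta_k \sigma_{k,i}(x)\cdot \nabla\phi(x) = O(z^2\theta_k^2 \|D^2\phi\|_\infty)$; this is the displayed bound \eqref{12} referenced in the statement and is integrable against $\nu$ thanks to $\int_{|z|\le 1} z^2\,\nu(dz) < \infty$.

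The main obstacle is the first step: correctly executing the Marcus chain rule on the composition $\xi_0 \circ X_t^{-1}$ and verifying that the resulting Marcus correction term matches precisely the $\nu(dz)ds$-integrand $\phi(\varphi_1^{k,i}) - \phi + z\theta_k \sigma_{k,i}\cdot \nabla\phi$ in \eqref{STE-weak}. Once this identity is established for smooth data, the extension to $L^2$ is essentially an isometric extension made possible by the measure-preserving character of $X_t$ and the linearity of the equation in $\xi$.
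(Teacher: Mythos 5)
Your proposal is correct and follows essentially the same route as the paper: rely on \cite[Theorem 2.1]{HP23} for smooth initial data (the paper uses $C^2$ rather than $C^\infty$), then extend to $\xi_0\in L^2(\T^d)$ via the measure-preserving flow, the resulting pathwise isometry $\|\xi^n(t)-\xi(t)\|_{L^2}=\|\xi_0^n-\xi_0\|_{L^2}$, and a term-by-term passage to the limit in the weak formulation using the It\^o isometry and the first- and second-order Taylor bounds \eqref{I1} and \eqref{12}. The only cosmetic difference is that your uniform-in-$(s,\omega)$ isometry gives the vanishing of the stochastic and $\nu$-integral terms directly, where the paper invokes dominated convergence; this is the same estimate packaged slightly more cleanly.
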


Recall that the noise is assumed to be smooth in space variable, it is possible to prove the pathwise uniqueness of solutions with additional efforts. We do not want to do this here since the uniqueness of solutions is not needed in the scaling limit result of Theorem \ref{main sca}.

\begin{proof}
 According to \cite[Theorem 2.1]{HP23}, the result holds when $\xi_0 \in C^2(\mathbb{T}^d)$. We extend this to general $\xi_0\in L^2(\mathbb{T}^d)$ through approximation.

Since $C^2(\mathbb{T}^d)$ is dense in $L^2(\mathbb{T}^d)$, there exists a sequence $\{\xi_0^n\} \subset C^2(\mathbb{T}^d)$ with $\xi_0^n \to \xi_0$ in $L^2(\mathbb{T}^d)$. Consider the approximating equations
\[
d\xi^n = \sum_{k,i} \theta_k \sigma_{k,i} \cdot \nabla \xi^n \diamond dZ_t^{k,i}, \quad \xi^n(0) = \xi_0^n.
\]
By \cite[Theorem 2.1]{HP23}, each equation admits a strong solution $\xi^n(t) = \xi_0^n\circ X_t^{-1} $, where $X_t$ possesses the measure-preserving property. This property ensures that $\mathbb{P}$-almost surely,
\[
\big\|\xi^n(t) - \xi(t)\big\|_{L^2}^2 = \int_{\mathbb{T}^d} \big|\xi_0^n(X_t^{-1}(x)) - \xi_0(X_t^{-1}(x))\big|^2 dx = \int _{\mathbb{T}^d}\big|\xi_0^n(y) - \xi_0(y)\big|^2 dy \to 0.
\]
Therefore, $\xi^n(t) \to \xi(t)$ in $L^2(\mathbb{T}^d)$ for each $t \in [0,T]$. The measure-preserving property of $X_t$ guarantees that the $L^2(\mathbb{T}^d)$ norm remains invariant: $\P$-a.s. for all $t\in [0,T]$,
\begin{equation}\label{xi-L2}
\|\xi(t) \|_{L^2} = \|\xi_0(X_t^{-1}) \|_{L^2} = \|\xi_0 \|_{L^2}<\infty,
\end{equation}
which implies that $\xi \in L^\infty([0,T]; L^2(\mathbb{T}^d))$ $\mathbb{P}$-almost surely.

We now verify that $\xi(t) = \xi_0(X_t^{-1})$ satisfies the weak formulation \eqref{STE-weak}. Each approximate solution $\xi^n$ satisfies
\begin{equation}\label{sub-STE-weak}
\begin{aligned}
\langle\xi^n(t),\phi\rangle &= \langle\xi^n_0,\phi\rangle + \sum_{k,i} \int_0^t \int_{|z|\le 1} \big\langle \xi^n(s-), \phi\big(\varphi^{k,i}_1 \big) -\phi \big\rangle \, \tilde N_{k,i}(dz,ds) \\
&+ \sum_{k,i} \int_0^t \int_{|z|\le 1} \big\langle \xi^n(s), \phi\big(\varphi^{k,i}_1 \big) -\phi + z \theta_k\sigma_{k,i} \cdot\nabla \phi \big\rangle\, \nu(dz)ds.
\end{aligned}
\end{equation}
We now address the convergence of each term in \eqref{sub-STE-weak} as $n \to \infty$. The convergence of the initial data term follows directly from the assumed $L^2$-convergences: $\langle\xi_0^n,\phi\rangle \to \langle\xi_0,\phi\rangle$ due to $\xi_0^n \to \xi_0$. Similarly, for the term involving $\xi^n(t)$, the convergence $\xi^n(t) \to \xi(t)$ in $L^2(\mathbb{T}^d)$ implies that $\langle \xi^n(t), \phi \rangle \to \langle \xi(t), \phi \rangle$.

For the martingale term, the independence of the Poisson martingale measures $N_{k,i}$ combined with the It\^o isometry yields
\begin{equation*}
\begin{aligned}
I_n &:= \mathbb{E} \bigg|\sum_{k,i} \int_0^t \int_{|z|\le 1} \big\langle \xi^n(s-)-\xi(s-), \phi\big(\varphi^{k,i}_1 \big) -\phi \big\rangle \, \tilde N_{k,i}(dz,ds)\bigg|^{2}\\
&= \sum_{k,i}\mathbb{E} \int_0^t \int_{|z|\le 1} \big\langle \xi^n(s)-\xi(s), \phi\big(\varphi^{k,i}_1 \big) -\phi \big\rangle^{2} \,\nu(dz)ds.
\end{aligned}
\end{equation*}
Similar to \eqref{I1}, we obtain
\begin{equation}\label{11}
\begin{aligned}
I_n &\le \sum_{k,i}\mathbb{E} \int_0^t \int_{|z|\le 1} \|\xi^n(s)-\xi(s) \|^{2}_{L^2} \big\|z \theta_k \sigma_{k,i}\cdot\nabla\phi(\zeta_{k,i})\big\|^{2}_{L^2}\, \nu(dz)ds\\
&\le (d-1)\|\theta\|_{\ell^2}^2\|\nabla\phi\|_{L^\infty}^2 \mathbb{E}\bigg( \int_0^T \|\xi^n(s)-\xi(s) \|^2_{L^2}\, ds\bigg) \int_{|z| \leq 1} z^2\,\nu(dz) ,
\end{aligned}
\end{equation}
 Since $\|\xi^n(s)-\xi(s)\|_{L^2} \to 0$ almost surely for each $s$, the dominated convergence theorem implies that $I_n \to 0$.

Regarding the last term in \eqref{sub-STE-weak}, we examine
\begin{equation*}
\begin{aligned}
J_n &= \mathbb{E}\bigg|\sum_{k,i} \int_0^t \int_{|z|\le 1} \big\langle \xi^n(s)-\xi(s), \phi\big(\varphi^{k,i}_1 \big) -\phi + z \theta_k\sigma_{k,i} \cdot\nabla \phi \big\rangle\, \nu(dz)ds \bigg|.
\end{aligned}
\end{equation*}
From Taylor's formula, there exists $\eta_{k,i}(x)$ lying on the line segment connecting $x$ and $\varphi^{k,i}_1(x)= x- z\theta_k \sigma_{k,i}(x)$ such that
\begin{equation}\label{Taylor-2}
    \phi\big(\varphi^{k,i}_1(x)\big) - \phi(x) = -z\theta_k \sigma_{k,i}(x) \cdot \nabla\phi(x) + \frac{z^2\theta_k^2}{2} \nabla^2\phi(\eta_{k,i}(x)) : (\sigma_{k,i} \otimes \sigma_{k,i})(x),
\end{equation}
where $:$ means the inner product of matrices. This identity leads to
\begin{equation}\label{12}
\begin{aligned}
J_n &\le \sum_{k,i} \mathbb{E} \int_0^t \int_{|z|\le 1} \|\xi^n(s)-\xi(s)\|_{L^2}\bigg\|\frac{z^2\theta_k^2}{2} \nabla^2\phi(\eta_{k,i}) : (\sigma_{k,i} \otimes \sigma_{k,i}) \bigg\|_{L^2} \,\nu(dz)ds\\
&\le (d-1)\|\theta\|_{\ell^2}^2\|\nabla^2\phi\|_{L^\infty} \mathbb{E}\bigg( \int_0^T \|\xi^n(s)-\xi(s)\|_{L^2} \,ds\bigg) \int_{|z| \leq 1} z^2\,\nu(dz).
\end{aligned}
\end{equation}
Similarly to \eqref{11}, the dominated convergence theorem guarantees that $J_n \to 0$. Combining the convergence of all the terms, we conclude that $\xi(t) = \xi_0\circ X_t^{-1}$ satisfies \eqref{STE-weak}.
\end{proof}

\subsection{Scaling limit of Marcus stochastic transport equations} \label{subs-scaling-SLTE}

This section is devoted to the proof of Theorem \ref{main sca}. Recall that we are given a sequence of noise coefficients $\theta^n$ satisfying Hypotheses \ref{hypo-coefficients}, and now consider a sequence of stochastic transport equations formulated in the weak sense: for $\phi\in C^\infty(\T^d)$,
\begin{equation}\label{STE-weak-n}
  \begin{aligned}
  \langle \xi^{n}(t),\phi\rangle&= \langle \xi_0,\phi\rangle+ \sum_{k,i} \int_0^t\! \int_{|z|\le 1} \big\langle \xi^{n}(s-), \phi\big(\varphi_{n,1}^{k,i} \big) -\phi \big\rangle   \, \tilde N_{k,i}(dz,ds) \\
  &\quad + \sum_{k,i}\int_0^t\! \int_{|z|\le 1} \big\langle \xi^n(s) , \phi\big(\varphi_{n,1}^{k,i} \big) -\phi + z \theta^n_k\sigma_{k,i} \cdot\nabla \phi \big\rangle  \, \nu(dz)ds.
  \end{aligned}
  \end{equation}
where the flow $\varphi_{n,t}^{k,i}$ is generated  by \eqref{characteristics} with $\theta_k$ replaced by $\theta^n_k$. Similar to \eqref{flow-expression}, we have
  \begin{equation}\label{flow-expression-n}
  \varphi_{n,t}^{k,i}(x) = x- tz \theta_k^n \sigma_{k,i}(x), \quad x\in \mathbb{T}^d, k\in \Z^d_0, i\in \{1,\ldots, d-1\}.
  \end{equation}
The solution constructed in Theorem \ref{cunzaixing} admits an explicit representation in terms of the inverse flow:
  $$\xi^n(t)=\xi_0\big((X_t^n)^{-1}\big),$$
where $X_t^n$ solves the Marcus stochastic equation
  $$dX_t^n = \sum_{k,i} \theta_k^n \sigma_{k,i}\big(X_t^n\big) \diamond dZ_t^{k,i}.$$
By Theorem \ref{cunzaixing}, $\xi^n$ preserves the $L^2(\mathbb{T}^d)$-norm: for all $n\geq1$,
  \begin{equation}\label{xi^n(t)}
  \P\mbox{-a.s.} \quad \|\xi^n(t)\|_{L^2}=\|\xi_0\|_{L^2} \quad \mbox{for all } t\in[0,T],
  \end{equation}
which ensures that $\xi^n \in L^\infty\big(\Omega,L^\infty([0,T];L^2(\mathbb{T}^d))\big)$.

Consequently, by the Banach-Alaoglu theorem, for any subsequence of $\xi^n$, there exists a further subsequence $\{\xi^{n_j}\}$ and a limit $\xi \in L^\infty\big(\Omega,L^\infty([0,T];L^2(\mathbb{T}^d))\big)$  such that
  \begin{equation*}
      \xi^{n_j} \rightarrow \xi \quad\text{weakly-$\ast$ in } L^\infty(\Omega,L^\infty([0,T]; L^2(\mathbb{T}^d))).
  \end{equation*}
In other words, for any $\phi\in C^\infty(\mathbb{T}^d),h\in C([0,T]),Y\in L^\infty(\Omega)$ the following holds:
 \begin{equation}\label{weakxing}
  \lim_{j\to\infty} \mathbb{E}\int_0^T Y(\omega)h(t)\langle \xi^{n_j}(t),\phi\rangle \, dt= \mathbb{E}\int_0^T Y(\omega)h(t)\langle \xi(t),\phi\rangle \, dt .
  \end{equation}
Note that the limit $\xi$ may depend on the choice of the subsequence. In the following we will characterize this limit  and show its independence of the choice of subsequence, which immediately implies that the whole sequence $\xi^{n}$ converges weakly-$\ast$ to the same limit $ \xi$

In what follows, we will write $\xi^{n_j}$ as $\xi^n$ to simplify notation. We present two lemmas showing the strong convergence of the last two terms in \eqref{STE-weak-n}.

\begin{lemma}\label{lem-levy}
As $n\to\infty$, the last term in \eqref{STE-weak-n} converges to
    $$\kappa \int_0^t \langle \xi(s) , \Delta \phi \rangle \, ds, $$
where $\kappa= C_d \int_{|z|\le 1} z^2 \,\nu(dz) $ with the same dimensional constant $C_d$ as in \eqref{eq-important-equality}.
\end{lemma}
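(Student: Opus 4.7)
The plan is to reduce the nonlocal integrand to a Laplacian via a second-order Taylor expansion, invoke the key algebraic identity \eqref{eq-important-equality} to collapse the diffusion tensor, and then pass to the limit using the weak-$\ast$ convergence \eqref{weakxing} of $\xi^n$ to $\xi$.

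Concretely, since $\varphi_{n,1}^{k,i}(x) = x - z\theta_k^n\sigma_{k,i}(x)$ by \eqref{flow-expression-n}, a Taylor expansion analogous to \eqref{Taylor-2} gives, for some point $\eta_{k,i}^n(x,z)$ on the segment joining $x$ and $\varphi_{n,1}^{k,i}(x)$,
\[
\phi\big(\varphi_{n,1}^{k,i}(x)\big) - \phi(x) + z\theta_k^n\sigma_{k,i}(x)\cdot\nabla\phi(x) = \tfrac{(z\theta_k^n)^2}{2}\,(\sigma_{k,i}\otimes\sigma_{k,i})(x):\nabla^2\phi\big(\eta_{k,i}^n(x,z)\big).
\]
I would then split the last term of \eqref{STE-weak-n} as $M_n(t)+R_n(t)$, where $M_n$ is the ``frozen'' principal part obtained by replacing $\nabla^2\phi(\eta_{k,i}^n)$ with $\nabla^2\phi$, and $R_n$ collects the resulting error.

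For the principal part, pulling the $z$-integral outside and applying \eqref{eq-important-equality} in the form $\sum_{k,i}(\theta_k^n)^2\sigma_{k,i}\otimes\sigma_{k,i} = 2C_d I_d$ (available because $\theta^n$ satisfies Hypotheses~\ref{hypo-coefficients}(1)--(2)), the double contraction with $\nabla^2\phi$ collapses to $2C_d\Delta\phi$, yielding
\[
M_n(t) = \kappa\int_0^t \langle\xi^n(s),\Delta\phi\rangle\, ds, \qquad \kappa := C_d\int_{|z|\le 1}z^2\,\nu(dz).
\]
Since $\Delta\phi\in L^2(\T^d)$ is a fixed test function, the weak-$\ast$ convergence \eqref{weakxing} applied with $\Delta\phi$ in place of $\phi$ (and, if needed, a Fubini step to absorb the outer $\int_0^t ds$) immediately sends $M_n$ to $\kappa\int_0^{\cdot}\langle\xi(s),\Delta\phi\rangle\,ds$ in the corresponding weak-$\ast$ sense against test functions $Y(\omega)h(t)$.

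The main obstacle is controlling $R_n$. Because $\phi\in C^\infty(\T^d)$ has Lipschitz Hessian and $|\eta_{k,i}^n(x,z)-x|\le |z|\,\theta_k^n\,\|\sigma_{k,i}\|_{L^\infty}\le C|z|\,\|\theta^n\|_{\ell^\infty}$, we get the pointwise bound
\[
\big|(\sigma_{k,i}\otimes\sigma_{k,i})(x):\big(\nabla^2\phi(\eta_{k,i}^n)-\nabla^2\phi(x)\big)\big| \le C_\phi\,|z|\,\|\theta^n\|_{\ell^\infty}.
\]
Combining this with $\|\xi^n(s)\|_{L^1}\le\|\xi_0\|_{L^2}$ from \eqref{xi^n(t)}, $\sum_{k,i}(\theta_k^n)^2=d-1$, and $\int_{|z|\le 1}|z|^3\,\nu(dz)\le\int_{|z|\le 1}z^2\,\nu(dz)<\infty$, I would conclude
\[
\sup_{\omega,\,t\in[0,T]}|R_n(t)|\le C_{d,\phi,\xi_0}\,T\,\|\theta^n\|_{\ell^\infty}\int_{|z|\le 1}z^2\,\nu(dz)\xrightarrow[n\to\infty]{}0
\]
by Hypotheses~\ref{hypo-coefficients}(3). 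The delicate point is obtaining uniformity over all indices $(k,i)$ in the remainder bound; the hypothesis $\|\theta^n\|_{\ell^\infty}\to 0$ is tailor-made to provide exactly this control.
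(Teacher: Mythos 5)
Your proposal is correct and follows essentially the same route as the paper: a second-order Taylor expansion using $\varphi_{n,1}^{k,i}(x)=x-z\theta_k^n\sigma_{k,i}(x)$, the identity \eqref{eq-important-equality} to turn the frozen Hessian term into $C_d z^2\Delta\phi$, weak-$\ast$ convergence of $\xi^n$ for the principal part, and a remainder of size $O(|z|^3\|\nabla^3\phi\|_{L^\infty}\|\theta^n\|_{\ell^\infty})$ killed by Hypotheses~\ref{hypo-coefficients}(3). The only cosmetic difference is that you bound the remainder pairing via $\|\xi^n(s)\|_{L^1}$ (using Lipschitz continuity of $\nabla^2\phi$) while the paper uses $\|\nabla^3\phi\|_{L^\infty}$ and the $L^2$ bound \eqref{xi^n(t)}; both are equivalent here.
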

\begin{proof}
 Denote
  $$\Phi_z^n (x):= \sum_{k,i} \big[\phi\big(\varphi_{n,1}^{k,i}(x) \big) -\phi(x) + z \theta_k^n(\sigma_{k,i} \cdot\nabla \phi)(x) \big], \quad x\in \mathbb{T}^d. $$
Similar to \eqref{Taylor-2}, we have
\begin{equation}\label{Taylor-n-2}
\begin{aligned}
\phi\big(\varphi_{n,1}^{k,i}(x) \big) - \phi(x)
&= -z\theta_k^n \sigma_{k,i}(x) \cdot \nabla\phi(x) + \frac{z^2{(\theta_k^n)}^2}{2} \nabla^2\phi(\eta_{k,i}^n(x)) : (\sigma_{k,i} \otimes \sigma_{k,i})(x),
\end{aligned}
\end{equation}
where $\eta_{k,i}^n(x)$ belongs to the line segment linking $x$ and $x- z \theta_k^n \sigma_{k,i}(x)$. Therefore,
  $$
  \begin{aligned}
  \Phi_z^n (x) &= \frac{z^2}2 \sum_{k,i} (\theta_k^n)^2\nabla^2\phi(\eta_{k,i}^n) : (\sigma_{k,i}\otimes \sigma_{k,i})(x) \\
  &= \frac{z^2}2 \sum_{k,i}  (\theta_k^n)^2 \nabla^2\phi(x) : (\sigma_{k,i}\otimes \sigma_{k,i})(x) \\
  & + \frac{z^2}2 \sum_{k,i} (\theta_k^n)^2\big[\nabla^2\phi(\eta_{k,i}^n)- \nabla^2\phi(x)\big] : (\sigma_{k,i}\otimes \sigma_{k,i})(x).
 \end{aligned}
 $$
Recalling the identity \eqref{eq-important-equality}, we obtain
  $$\Phi_z^n (x)= C_d z^2 \Delta \phi(x) + \Psi_z^n(x),$$
with the remainder term
  $$\Psi_z^n(x):= \frac{z^2}2 \sum_{k,i} (\theta_k^n)^2 \big[\nabla^2\phi(\eta_{k,i}^n)- \nabla^2\phi(x) \big] : (\sigma_{k,i}\otimes \sigma_{k,i})(x). $$
This decomposition yields
  \begin{equation}\label{levy-con-1}
  \begin{aligned}
  &  \sum_{k,i} \int_0^t \int_{|z|\le 1} \big\langle \xi^n(s) , \phi\big( \varphi_{n,1}^{k,i} \big) -\phi + z \theta^n_k\sigma_{k,i} \cdot\nabla \phi \big\rangle\, \,\nu(dz) \, ds \\
  &= C_d \bigg( \int_{|z|\le 1} z^2 \,\nu(dz) \bigg) \int_0^t \langle \xi^n(s) , \Delta \phi \rangle \, ds +\int_0^t \int_{|z|\le 1} \langle \xi^n(s) , \Psi_z^n \rangle \,\nu(dz) \, ds.
  \end{aligned}
\end{equation}

 Comparing \eqref{levy-con-1} with the statement of the lemma, it remains to show that the error term $\Psi_z^n (x)$ vanishes as $\|\theta^n \|_{\ell^\infty} \to 0$. Indeed, recalling that $\eta_{k,i}^n$ belongs to the line segment linking $x$ and $x- z \theta_k^n \sigma_{k,i}(x)$, we have
  \begin{equation}
     \begin{aligned}\label{1}
          |\nabla^2\phi(\eta_{k,i}^n)- \nabla^2\phi(x)| &\le \|\nabla^3\phi \|_{L^\infty} |\eta_{k,i}^n -x| \\&\le \|\nabla^3\phi \|_{L^\infty} |\theta_k^n|\, |z|\, \|\sigma_{k,i}\|_{L^\infty} \\
          &\le \sqrt{2} \|\nabla^3\phi \|_{L^\infty} \|\theta ^n\|_{\ell^\infty} |z|,
     \end{aligned}
  \end{equation}
where the last inequality used the bound $\|\sigma_{k,i}\|_{L^\infty}\leq \sqrt{2}.$
Also note that $\|\theta^n\|_{\ell^2}=1$, thus
  \begin{equation}\label{2}
      \sum_{k,i}  (\theta_k^n)^2 |(\sigma_{k,i}\otimes \sigma_{k,i})(x)| \le 2(d-1)\sum_{k}  (\theta_k^n)^2  ={2}(d-1).
  \end{equation}
Combining \eqref{1} and \eqref{2}, we obtain the uniform estimate
  $$|\Psi_z^n(x)|\le C'_d |z|^3 \|\nabla^3\phi \|_{L^\infty} \|\theta^n \|_{\ell^\infty},\quad x\in \mathbb{T}^d, $$
where $C'_d $ is a constant depending only on $d$.
Hence,
\begin{equation}\label{levy-con-3}
  \begin{aligned}
  \bigg|\int_0^t \int_{|z|\le 1} \big\langle \xi^n(s) , \Psi^n_z \big\rangle \,\nu(dz)ds \bigg|
  &\le C'_d \|\theta^n\|_{\ell^\infty}\|\nabla^3\phi\|_{L^\infty} \bigg( \int_{|z|\le 1} z^2 \,\nu(dz) \bigg)\int_0^t \|\xi^n(s) \|_{L^2} \, ds \\
  &\le C'_d  \|\theta^n\|_{\ell^\infty}\|\nabla^3\phi\|_{L^\infty} \bigg( \int_{|z|\le 1} z^2\,\nu(dz) \bigg) T \|\xi_0\|_{L^2},
  \end{aligned}
\end{equation}
thanks to the uniform bound $\|\xi^n(s) \|_{L^2} \le \|\xi_0 \|_{L^2}$. Thus, this term vanishes since $\|\theta^n \|_{\ell^\infty} \to 0$ as $n\to\infty$. Now combining \eqref{levy-con-3} with the fact that $\xi^n \to \xi$ weakly-$\ast$ in $L^\infty\big(\Omega,L^\infty([0,T];L^2(\mathbb{T}^d))\big)$, we conclude that \eqref{levy-con-1} converges to
\begin{equation*}
C_d \bigg( \int_{|z| \leq 1} z^2 \, \nu(dz)\bigg) \int_0^t \langle \xi(s) , \Delta \phi \rangle \, ds,
\end{equation*}
which completes the proof.
\end{proof}

Now we turn to dealing with the martingale term in \eqref{STE-weak-n}.
\begin{lemma}\label{lem-mart}
   The martingale term in \eqref{STE-weak-n} vanishes in the mean square sense as $n \to \infty$.
\end{lemma}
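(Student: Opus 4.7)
Denote the martingale term by
\[
M_n(t) := \sum_{k,i} \int_0^t\!\int_{|z|\le 1} \big\langle \xi^n(s-), \phi(\varphi_{n,1}^{k,i}) - \phi \big\rangle\, \tilde N_{k,i}(dz,ds).
\]
Since the $\{\tilde N_{k,i}\}$ are independent compensated Poisson random measures, the It\^o isometry yields
\[
\mathbb{E}|M_n(t)|^2 = \sum_{k,i} \mathbb{E}\int_0^t\!\int_{|z|\le 1} \big\langle \xi^n(s), \phi(\varphi_{n,1}^{k,i}) - \phi \big\rangle^2 \,\nu(dz)ds.
\]
The strategy is to decompose the integrand using the second-order Taylor expansion \eqref{Taylor-n-2},
\[
\phi(\varphi_{n,1}^{k,i}(x)) - \phi(x) = -z\theta_k^n \sigma_{k,i}(x)\cdot\nabla\phi(x) + r_{k,i}^n(x),
\]
with remainder $|r_{k,i}^n(x)| \le \tfrac{1}{2} z^2 (\theta_k^n)^2 \|\nabla^2\phi\|_{L^\infty} \|\sigma_{k,i}\|_{L^\infty}^2$, and then exhibit a factor of $\|\theta^n\|_{\ell^\infty}^2$ in each piece via the elementary bound $(a+b)^2 \le 2a^2 + 2b^2$.

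The remainder contribution is easy: using $\|\xi^n(s)\|_{L^2} = \|\xi_0\|_{L^2}$ and $\sum_{k,i}(\theta_k^n)^2 \le (d-1)$, one has
\[
\sum_{k,i} \langle \xi^n(s), r_{k,i}^n\rangle^2 \le C_d z^4 \|\theta^n\|_{\ell^\infty}^2\, \|\nabla^2\phi\|_{L^\infty}^2 \|\xi_0\|_{L^2}^2,
\]
which will vanish as $n\to\infty$ thanks to Hypotheses \ref{hypo-coefficients}(3) and $\int_{|z|\le 1} z^2\nu(dz) < \infty$. The subtle part — and in my view the main obstacle — is the leading term
\[
\sum_{k,i} z^2 (\theta_k^n)^2 \langle \xi^n(s), \sigma_{k,i}\cdot\nabla\phi\rangle^2,
\]
because the naive bound $\|\sigma_{k,i}\cdot\nabla\phi\|_{L^2}^2 \le 2\|\nabla\phi\|_{L^\infty}^2$ gives only a uniform bound in $n$, not smallness. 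Here I would exploit the Fourier structure of $\sigma_{k,i} = a_{k,i} e_k$: setting $g(x) := \xi^n(s,x)\nabla\phi(x)$ and $\hat g_k := \int_{\T^d} g(x) e_k(x)\, dx \in \R^d$, one has $\langle \xi^n(s), \sigma_{k,i}\cdot\nabla\phi\rangle = a_{k,i}\cdot \hat g_k$. Since $\{a_{k,i}\}_{i=1}^{d-1}$ is an orthonormal basis of $k^\perp$,
\[
\sum_{i=1}^{d-1} (a_{k,i}\cdot \hat g_k)^2 \le |\hat g_k|^2,
\]
and Parseval gives $\sum_k |\hat g_k|^2 \lesssim \|g\|_{L^2}^2 \le \|\nabla\phi\|_{L^\infty}^2 \|\xi_0\|_{L^2}^2$. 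Hence
\[
\sum_{k,i}(\theta_k^n)^2 \langle \xi^n(s), \sigma_{k,i}\cdot\nabla\phi\rangle^2 \le \|\theta^n\|_{\ell^\infty}^2 \sum_k |\hat g_k|^2 \le \|\theta^n\|_{\ell^\infty}^2 \|\nabla\phi\|_{L^\infty}^2 \|\xi_0\|_{L^2}^2.
\]

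Combining the two bounds and integrating in $s$ and $z$,
\[
\mathbb{E}|M_n(t)|^2 \le C\, T\, \|\theta^n\|_{\ell^\infty}^2\, \|\xi_0\|_{L^2}^2\, \big(\|\nabla\phi\|_{L^\infty}^2 + \|\nabla^2\phi\|_{L^\infty}^2\big) \int_{|z|\le 1} z^2\,\nu(dz),
\]
and the right-hand side tends to $0$ by Hypotheses \ref{hypo-coefficients}(3). This concludes the convergence of $M_n(t)$ to $0$ in $L^2(\Omega)$, completing the proof.
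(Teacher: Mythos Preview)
Your proof is correct and follows essentially the same approach as the paper: Taylor-expand $\phi(\varphi_{n,1}^{k,i})-\phi$, bound the remainder term directly, and extract the crucial $\|\theta^n\|_{\ell^\infty}^2$ factor from the leading term via Bessel/Parseval. The only cosmetic difference is that the paper phrases the key step as ``$\{\sigma_{k,i}\}$ is an orthonormal system of divergence-free vector fields, hence $\sum_{k,i}\langle \xi^n(s)\nabla\phi,\sigma_{k,i}\rangle^2 \le \|\xi^n(s)\nabla\phi\|_{L^2}^2$'', whereas you unpack this same Bessel inequality explicitly via the Fourier coefficients $\hat g_k$ and the orthonormality of the $a_{k,i}$.
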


\begin{proof}
    Consider the martingale term:
  $$M_t^n:= \sum_{k,i}\int_0^t \int_{|z|\le 1} \big\langle \xi^n(s-), \phi\big(\varphi_{n,1}^{k,i} \big) -\phi \big\rangle \, \tilde N_{k,i}(dz,ds). $$
By Burkholder's inequality, we have
  \begin{equation}\label{M}
  \begin{aligned}
  \mathbb{E}(M_t^n)^2
  \le \sum_{k,i} \mathbb{E} \int_0^T \int_{|z|\le 1} \big\langle \xi^n(s), \phi\big(\varphi_{n,1}^{k,i} \big) -\phi \big\rangle^2 \,\nu(dz) ds.
  \end{aligned}
  \end{equation}
By \eqref{Taylor-n-2}, we obtain the estimate
\begin{equation}\label{chaikai}
  \begin{aligned}
  \sum_{k,i} \big\langle \xi^n(s) , \phi\big(\varphi_{n,1}^{k,i} \big) -\phi \big\rangle^2
  &\le 2 \sum_{k,i}\big\langle \xi^n(s) , -z\theta^n_k \sigma_{k,i}\cdot\nabla\phi\big\rangle^2 \\
  &+ 2 \sum_{k,i}\bigg\langle \xi^n(s) , \frac{z^2{(\theta_k^n)}^2}{2} \nabla^2\phi(\eta_{k,i}^n) : (\sigma_{k,i} \otimes \sigma_{k,i})\bigg\rangle^2.
  \end{aligned}
  \end{equation}
  We now analyze each term separately. For the first term, define
  \begin{equation}\label{estimate-0}
  \begin{aligned}
      J^n_1&:=\sum_{k,i}\big\langle \xi^n(s) , -z\theta^n_k \sigma_{k,i} \cdot\nabla\phi\big\rangle^2
      \le \|\theta^n \|_{\ell^\infty}^2z^2  \sum_{k,i}\big\langle \xi^n(s) \nabla\phi, \sigma_{k,i}\big\rangle^2.
  \end{aligned}
  \end{equation}
 Due to the fact that $\{\sigma_{k,i}\}_{k\in \mathbb{Z}^d_0, i = 1,\ldots, d - 1}$ is an orthonormal system of divergence free vector fields in $L^2(\mathbb{T}^d)$, it follows that
  $$\sum_{k,i}\big\langle \xi^n(s) \nabla\phi, \sigma_{k,i}\big\rangle^2 \le\|\xi^n(s) \nabla\phi\|_{L^2}^2\le \|\xi^n(s)\|_{L^2}^2 \|\nabla\phi\|_{L^\infty}^2 \le \|\xi_0\|_{L^2}^2 \|\nabla\phi\|_{L^\infty}^2,$$
where in the last step we have used \eqref{xi^n(t)}. Substituting this into \eqref{estimate-0} yields
  \begin{equation*}
  \begin{aligned}
J^n_1\le \|\theta^n\|_{\ell^\infty}^2z^2\|\xi_0\|_{L^2}^2 \|\nabla\phi\|_{L^\infty}^2.
 \end{aligned}
  \end{equation*}
 For the second term, we have
\begin{equation*}
  \begin{aligned}
J^n_2&:= \sum_{k,i}\bigg\langle \xi^n(s), \frac{z^2{(\theta_k^n)}^2}{2} \nabla^2\phi(\eta_{k,i}^n) : (\sigma_{k,i} \otimes \sigma_{k,i})\bigg\rangle^2\\
&\le \frac{z^4}{4} \sum_{k,i}  \big(\theta^n_k\big)^4 \| \xi^n(s) \|_{L^2}^2 \|\nabla^2\phi\big(\eta^n_{k,i}\big):(\sigma_{k,i} \otimes \sigma_{k,i}) \|_{L^2}^2.
 \end{aligned}
  \end{equation*}
By \eqref{xi^n(t)} and $\|\theta^n\|_{\ell^2}=1 $ for any $n\ge1$, we obtain
 \begin{equation*}
  \begin{aligned}
J^n_2&\le Cz^4 \|\theta^n\|_{\ell^\infty}^2\Big(\sum_{k,i}  \big(\theta^n_k\big)^2\Big)\|\xi_0 \|_{L^2}^2 \big\|\nabla^2\phi \big\|_{L^\infty}^2\\
&\le C(d-1)z^4 \|\theta^n\|_{\ell^\infty}^2 \|\xi_0 \|_{L^2}^2 \big\|\nabla^2\phi \big\|_{L^\infty}^2.
  \end{aligned}
  \end{equation*}
Substituting the estimates on $J^n_1$ and $J^n_2$ into \eqref{chaikai} yields
   \begin{equation*}
  \begin{aligned}
  \sum_{k,i} \big\langle \xi^n(s) , \phi\big(\varphi_{n,1}^{k,i} \big) -\phi \big\rangle^2
  &\le 2J^n_1+2J^n_2\le C_d\|\theta^n\|_{\ell^\infty}^2(z^2+z^4)\|\xi_0 \|_{L^2}^2 \|\phi \|_{C^2}^2,
  \end{aligned}
  \end{equation*}
where $\|\phi \|_{C^2}= \sum_{m=0}^2 \|\nabla^m \phi \|_{L^\infty}$. Combining this with \eqref{M} we obtain
  $$\mathbb{E}(M_t^n)^2\le CT\|\theta^n \|_{\ell^\infty}^2 \|\xi_0 \|_{L^2}^2 \|\phi \|_{C^2}^2 \int_{|z|\le 1} z^2 \,\,\nu(dz), $$
which vanishes as $n\to \infty$ due to item (3) in Hypotheses \ref{hypo-coefficients}.
\end{proof}

Finally, we can complete the

\begin{proof}[Proof of Theorem~\ref{main sca}]
According to the discussions around \eqref{weakxing}, for any subsequence of $\xi^n$, there exists a further subsequence $\xi^{n_j}$ such that $\xi^{n_j}\rightarrow \xi$ weakly-$\ast$ in $L^\infty(\Omega,L^\infty([0,T]; L^2(\mathbb{T}^d)))$. Combining Lemmas~\ref{lem-levy}, \ref{lem-mart} and \eqref{weakxing} with $\{\xi^{n}\}$ replaced by $\{\xi^{n_j}\}$, for any $\phi\in C^\infty(\mathbb{T}^d),h\in C([0,T]),Y\in L^\infty(\Omega)$ we obtain that the limit $\xi$ satisfies
  \begin{equation*}
  \begin{aligned}
      \mathbb{E}\int_0^TY(\omega) h(t)\langle\xi(t) ,\phi\rangle\,dt &= \mathbb{E}\int_0^TY(\omega) h(t)\langle\xi_0,\phi\rangle \,dt + \kappa\, \mathbb{E}\int_0^T\!\! \int_0^t Y(\omega) h(t)\langle\xi(s) , \Delta \phi\rangle\,dsdt.
  \end{aligned}
  \end{equation*}
Since $Y\in L^\infty(\Omega)$ is arbitrary, we can get that $\mathbb{P}$-a.s.,
  \begin{equation}\label{proof-limit-heat-eq}
  \int_0^T h(t)\langle \xi(t) ,\phi\rangle\,dt  = \langle \xi_0,\phi\rangle \int_0^T h(t)\, dt + \kappa \int_0^T h(t) \int_0^t \langle \xi(s) ,\Delta\phi\rangle  \, ds dt.
  \end{equation}
Note that the $\P$-negligible set may depend on $h\in C([0,T])$ and $\phi\in C^\infty(\mathbb{T}^d)$, but for two dense sets $\{h_n\}_n \subset C([0,T])$ and $\{\phi_n\}_n \subset C^\infty(\T^d)$, by a standard diagonal arguments, we can extract a common set $\Omega_0$ with full probability, such that the above equation holds for all $\omega\in \Omega_0$, $h_n$ and $\phi_n$. Then by density of $\{h_n\}_n \subset C([0,T])$ and $\{\phi_n\}_n \subset C^\infty(\T^d)$, we deduce that \eqref{proof-limit-heat-eq} holds $\P$-a.s. for all $h\in C([0,T])$ and $\phi\in C^\infty(\T^d)$. This implies that any weakly-$\ast$ limit of subsequences of $\{\xi^n\}_n$ is a weak solution to the linear heat equation. By the uniqueness of weak solutions in $L^\infty([0,T];L^2(\mathbb{T}^d))$ to the heat equation, we conclude that the whole sequence $\{\xi^n\}_n$ converges weakly-$\ast$ in $L^\infty\big(\Omega,L^\infty([0,T]; L^2(\mathbb{T}^d))\big)$ to the same limit. Finally, noting that weak convergence to a deterministic limit implies convergence in probability to the same object, we finish the proof of Theorem \ref{main sca}.
\end{proof}

\section{Weak existence for stochastic 2D Euler equations with L\'evy noises}\label{sect:see}

This section is devoted to proving existence of weak solutions to \eqref{stoch-2D-Euler-eq}, using the classical Galerkin approximation and compactness arguments. The proof proceeds through the following steps:
\begin{enumerate}  
    \item \textbf{Galerkin approximation}: we approximate \eqref{stoch-2D-Euler-eq} by projecting it onto finite-dimensional spaces and then we derive uniform bounds for the approximate solutions.
    \item \textbf{Stochastic compactness}: we show that the laws of approximate solutions are tight in suitable Skorohod spaces.
    \item \textbf{Existence of weak solution}: we prove the existence of solution by applying the Skorohod representation theorem.
\end{enumerate}
We will perform these steps in the following subsections.

\subsection{Galerkin approximation}\label{Faedo-Galerkin}

This section constructs finite-dimensional approximations of \eqref{stoch-2D-Euler-eq} using the Faedo-Galerkin method, yielding a sequence of finite-dimensional equations that converge to the original infinite-dimensional equation.

Let $H_n := \mathrm{span}\{e_k: |k|\le n\} \subset L^2(\mathbb{T}^2)$ denote the finite-dimensional Fourier-Galerkin subspace, where $\{e_k\}_{k\in\mathbb{Z}_0^2}$ is the orthonormal Fourier basis defined in \eqref{ek}. The  orthogonal projection $\Pi_n: L^2(\mathbb{T}^2) \to H_n$ satisfies
\[
\langle \Pi_n f, g \rangle_{L^2} = \langle f, \Pi_ng \rangle_{L^2}, \quad \forall f,g \in L^2(\mathbb{T}^2).
\]
Projecting \eqref{stoch-2D-Euler-eq} onto  $H_n$ leads to the Galerkin approximation:
  \begin{equation}\label{eq:main_eq}
  d\xi_n + \Pi_n\big(u_n \cdot \nabla \xi_n\big) dt + \sum_{k\in\mathbb{Z}_0^2} \theta_k A_k^n(\xi_n)\diamond dZ^k_t = 0,
  \end{equation}
where $\xi_n := \Pi_n \xi$ represents the projection of the vorticity onto $H_n$ and $u_n := K\ast \xi_n= \Pi_n(K\ast \xi)$ denotes the corresponding velocity field. For notational convenience, we introduce the operator
  $$A_k^n : H_n \rightarrow H_n,\quad \xi_n \mapsto A_k^n(\xi_n)= \Pi_n (\sigma_k \cdot \nabla \xi_n).$$
By the definition of $\sigma_k$ and expressing $\xi_n $ in terms of the basis of $H_n$, it is easy to see that $A_k^n$ can be represented as a matrix and
$$A_k^n(\xi_n) = A_k^n \xi_n,$$
where the right-hand side denotes the product of matrix and vector. Since $\sigma_k$ is divergence-free, one can show that $A_k^n$ is antisymmetric; indeed, for any $\xi_n, \eta_n \in H_n$,
  $$\langle A_k^n (\xi_n),\eta_n\rangle=\langle \sigma_k \cdot \nabla \xi_n, \eta_n\rangle=-\langle\xi_n, \sigma_k \cdot \nabla \eta_n\rangle=-\langle\xi_n,A_k^n(\eta_n)\rangle .$$
Note that the summation in \eqref{eq:main_eq} reduces to the finite index set $\Lambda_n:=\{k\in\mathbb{Z}_0^2: |k|\le 2n\}$, since $A_k^n=0$ for $|k|>2n$.

Following \cite{KPP95}, equation \eqref{eq:main_eq} can be expressed in It\^o integral form as
\begin{equation}\label{Galerkin-eq}
    \begin{aligned}
        \xi_n(t)
&= \xi_n(0) - \int_0^t \Pi_n \big(u_n(s) \cdot \nabla \xi_n(s)\big)  \,ds - \sum_{k \in \Lambda_n}\theta_k \int_0^t A_k^n \big(\xi_n(s-)\big) \, d Z_s^k \\
&\quad + \sum_{k \in \Lambda_n} \sum_{0 \leq s \leq t} \big[ \varphi\big(-\Delta Z^k_s\theta_k  A_k^n, \xi_n(s-)\big) - \xi_n(s-) + \Delta Z^k_s\theta_k  A_k^n (\xi_n(s-)) \big],
    \end{aligned}
\end{equation}
where $\xi_n(0)=\Pi_n \xi_0$ for some $\xi_0\in L^2(\T^2)$, $\Delta Z^k_s= Z^k_s- Z^k_{s-}$ and the mapping $\varphi$ is given by
\[\varphi \big(-z \theta_k A_k^n, f\big) := \Phi \big(1;-z \theta_k A_k^n, f\big),\quad f\in H_n,\]
namely, the value at $t=1$ of the solution \(\Phi=\big\{\Phi(t;-z \theta_k A_k^n, f)\big\}_{t\in \mathbb{R}}\) to the differential equation on $H_n$:
  \[ \frac{d\Phi}{dt} = -z \theta_k A_k^n (\Phi), \quad \Phi(0) = f. \]
It is well known that the solution can be expressed in terms of the exponential of matrix:
\[ \varphi(-z \theta_k A_k^n, f) = {\rm e}^{-z \theta_k A_k^n} f.\]
In light of the definition of the integral with respect to a Poisson random measure, \eqref{Galerkin-eq} can be recast as
\begin{equation*}
\begin{aligned}
\xi_n(t) &= \xi_n(0) - \int_0^t \Pi_n \big(u_n(s) \cdot \nabla \xi_n(s)\big)  \,ds - \sum_{k \in \Lambda_n} \theta_k \int_0^t\! \int_{|z| \leq 1} z A_k^n (\xi_n(s-))  \,\tilde{N}_k (dz, ds) \\
&\quad + \sum_{k \in \Lambda_n} \int_0^t\! \int_{|z| \leq 1} \big[ {\rm e}^{-z \theta_k A_k^n} \xi_n(s-) - \xi_n(s-) + z \theta_k A_k^n (\xi_n(s-)) \big] \,{N}_k (dz, ds).
\end{aligned}
\end{equation*}
Using the relation $\tilde{N}_k(dz, ds) = N_k(dz, ds) - \nu(dz) ds$, the equation becomes
\begin{equation}\label{Hn-equ-2}
\begin{aligned}
\xi_n(t) &= \xi_n(0) - \int_0^t \Pi_n \big(u_n(s) \cdot \nabla \xi_n(s)\big) \, ds \\
&\quad + \sum_{k \in \Lambda_n} \int_0^t\! \int_{|z| \leq 1} \big[ {\rm e}^{-z \theta_k A_k^n} \xi_n(s-) - \xi_n(s-)\big] \tilde{N}_k (dz, ds) \\
&\quad + \sum_{k \in \Lambda_n} \int_0^t\! \int_{|z| \leq 1} \big[ {\rm e}^{-z \theta_k A_k^n} \xi_n(s) - \xi_n(s) + z \theta_k A_k^n (\xi_n(s)) \big] \,{\nu} (dz)ds.
\end{aligned}
\end{equation}

The following result provides the existence of solutions to \eqref{Hn-equ-2} and the energy estimate for $\xi_n$, which is necessary for deriving compactness results when passing to the limit as $n \to \infty$.

\begin{proposition}\label{xi-Linf-L2}
For any initial  $\xi_0\in L^2(\mathbb{T}^2)$, there exists a unique $(\mathcal{F}_t)$-adapted c\`adl\`ag process $\xi_n \in D([0,T];H_n)$ satisfying \eqref{Hn-equ-2} $\mathbb{P}$-almost surely. Moreover, $\xi_n$  satisfies the energy bound
\begin{equation}\label{uniform}
\sup_{t\in[0,T]}\|\xi_n(t)\|_{L^2}\leq\|\xi_0\|_{L^2}\quad\mathbb{P}\text{-almost surely}.
\end{equation}
\end{proposition}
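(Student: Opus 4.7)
The plan is to exploit the finite-dimensionality of $H_n$: in coordinates with respect to the orthonormal basis $\{e_k\}_{|k|\le n}$, equation \eqref{Hn-equ-2} reduces to an SDE with jumps on a finite-dimensional Euclidean space whose coefficients are smooth in $\xi_n$. The nonlinear transport $\Pi_n(u_n\cdot\nabla\xi_n)$ is a quadratic polynomial in the coordinates of $\xi_n$ (hence locally Lipschitz), the stochastic integrand $z\theta_k A_k^n\xi_n$ is linear, and the Marcus compensator
\[
\sum_{k\in\Lambda_n}\int_{|z|\le 1}\bigl[\mathrm{e}^{-z\theta_k A_k^n}\xi_n-\xi_n+z\theta_k A_k^n\xi_n\bigr]\,\nu(dz)
\]
defines a bounded linear operator on $H_n$, thanks to the second-order estimate $\|\mathrm{e}^{-z\theta_k A_k^n}-I+z\theta_k A_k^n\|_{H_n\to H_n}\le C_n z^2$ together with $\int_{|z|\le 1}z^2\,\nu(dz)<\infty$. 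Classical theory for finite-dimensional SDEs driven by compensated Poisson random measures (see \cite{KPP95} and the references therein) then produces a unique $(\mathcal F_t)$-adapted c\`adl\`ag local solution up to the blow-up time $\tau_n:=\lim_{k\to\infty}\tau_{n,k}$, where $\tau_{n,k}:=\inf\{t\ge 0:\|\xi_n(t)\|_{L^2}\ge k\}$.

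Next I would derive the pathwise energy identity $\|\xi_n(t)\|_{L^2}=\|\Pi_n\xi_0\|_{L^2}$ on $[0,\tau_n)$, which together with $\|\Pi_n\xi_0\|_{L^2}\le\|\xi_0\|_{L^2}$ yields \eqref{uniform} and forces $\tau_n=+\infty$. Applying the Marcus chain rule of \cite{KPP95} to $\Phi(\xi)=\|\xi\|_{L^2}^2$, the nonlinear drift contributes
\[
\langle\Pi_n(u_n\cdot\nabla\xi_n),\xi_n\rangle=\langle u_n\cdot\nabla\xi_n,\xi_n\rangle=\tfrac12\int_{\mathbb T^2}u_n\cdot\nabla(\xi_n^2)\,dx=0
\]
by $\nabla\cdot u_n=0$, while the noise contributes $\sum_{k\in\Lambda_n}\theta_k\langle A_k^n\xi_n,\xi_n\rangle\diamond dZ^k_t=0$ because $A_k^n$ is antisymmetric on $H_n$ (as recorded in Section \ref{Faedo-Galerkin}). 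Equivalently, in the It\^o form \eqref{Hn-equ-2} the same conclusion follows by noting that antisymmetry of $A_k^n$ makes $\mathrm{e}^{-z\theta_k A_k^n}$ an orthogonal matrix for every $(k,z)$, so each jump $\xi_n(s-)\mapsto\mathrm{e}^{-\Delta Z^k_s\theta_k A_k^n}\xi_n(s-)$ is an $L^2$-isometry; the cancellation between the Marcus compensator and the quadratic variation of the purely discontinuous martingale then gives $d\|\xi_n\|_{L^2}^2\equiv 0$.

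With the uniform a priori bound in hand, $\tau_{n,k}\to\infty$ pathwise as $k\to\infty$, so the local solution extends to a c\`adl\`ag process on $[0,T]$ taking values in the closed $L^2$-ball of radius $\|\xi_0\|_{L^2}$ in $H_n$, and pathwise uniqueness is preserved by the global extension. The only delicate step in this program is keeping the Marcus compensator and the martingale term consistent when invoking the It\^o formula for jump processes; once the orthogonality of $\mathrm{e}^{-z\theta_k A_k^n}$ is exploited, the cancellation is automatic and the remainder reduces to a routine application of finite-dimensional jump-SDE theory.
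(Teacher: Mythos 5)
Your proposal is correct and follows essentially the same route as the paper: local well-posedness of the finite-dimensional Marcus SDE via \cite[Theorem 3.2]{KPP95} (the drift being a quadratic polynomial on $H_n$), the Marcus chain rule applied to $\|\xi_n\|_{L^2}^2$ with both the drift and noise contributions vanishing because $u_n$, $\sigma_k$ are divergence-free (equivalently, $A_k^n$ is antisymmetric), and then global extension from the resulting a priori bound $\|\xi_n(t)\|_{L^2}=\|\Pi_n\xi_0\|_{L^2}\le\|\xi_0\|_{L^2}$. The extra observations you add (orthogonality of ${\rm e}^{-z\theta_k A_k^n}$ and the compensator/quadratic-variation cancellation, explicit stopping-time localization) are consistent refinements of the same argument rather than a different method.
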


\begin{proof}
Note that the drift term $\Pi_n(u_n\cdot \xi_n)$ in \eqref{Hn-equ-2} is in fact a quadratic polynomial mapping on $H_n$, the existence of a unique local solution follows from \cite[Theorem 3.2]{KPP95}, while the global existence is an immediate consequence of the  energy estimate derived below. We apply the It\^o formula to  $ \|\xi_n\|_{L^2}^2$ for the Marcus equation \eqref{eq:main_eq}:
\begin{align*}
d\|\xi_n\|_{L^2}^2
&=2\big\langle\xi_n,-\Pi_n(u_n\cdot\nabla\xi_n)\big\rangle dt -2\sum_{k\in \Lambda_n}\theta_k\big\langle\xi_n,\Pi_n(\sigma_k\cdot\nabla\xi_n)\big\rangle\diamond dZ_t^k.
\end{align*}
The key observation is that both terms on the right-hand side vanish due to the fact that $u_n$ and $\sigma_k$ are both divergence-free. Combining this with the self-adjointness of $\Pi_n$ and integrating by parts, we obtain
\begin{align*}
\big\langle \xi_n, \Pi_n(u_n \cdot \nabla \xi_n) \big\rangle &=\big\langle \xi_n, u_n \cdot \nabla \xi_n \big\rangle = \frac{1}{2} \big\langle u_n, \nabla |\xi_n|^2 \big\rangle = 0, \\
\big\langle \xi_n, \Pi_n(\sigma_k \cdot \nabla \xi_n) \big\rangle &= \frac{1}{2} \big\langle \sigma_k, \nabla |\xi_n|^2 \big\rangle = 0.
\end{align*}
Therefore, $d\|\xi_n\|_{L^2}^2 =0,$ which implies $\|\xi_n(t) \|_{L^2}^2 = \|\xi_n(0) \|_{L^2}^2\le \|\xi_0\|_{L^2}^2$ for all $t\in[0,T]$, therefore we obtain the desired energy bound.
\end{proof}

\subsection{Tightness results}\label{com-tight}

This section gives a series of compactness results for the approximate solutions, beginning with proving that the sequence $\{\xi_n\}_n$ satisfies the Aldous conditions in $H^{-\beta}(\mathbb{T}^2)$ for $\beta>3$ based on the continuous embedding $D([0,T];H_{n}) \subset D([0,T];H^{-\beta}(\mathbb{T}^2))$.

\begin{proposition}\label{aldous-condition}
     The sequence $\{\xi_n\}_n$ satisfies the Aldous condition in $H^{-\beta}(\mathbb{T}^2)$ for $\beta>3$.
\end{proposition}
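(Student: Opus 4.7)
The strategy is to apply the sufficient condition in Lemma \ref{adlous condition1}: I would show that for all $n$ and all $(\mathcal{F}_t)$-stopping times $\tau_n$ with $\tau_n + \theta \leq T$,
\begin{equation*}
\mathbb{E}\bigl[\|\xi_n(\tau_n+\theta) - \xi_n(\tau_n)\|_{H^{-\beta}}^2\bigr] \leq C\,\theta,
\end{equation*}
which verifies the Aldous condition with $\alpha = 2$ and exponent $1$. Using \eqref{Hn-equ-2}, the increment splits into the drift $-\int_{\tau_n}^{\tau_n+\theta}\Pi_n(u_n\cdot\nabla\xi_n)\,ds$, a compensated Poisson martingale, and a L\'evy compensator, and I would estimate each separately in $H^{-\beta}(\mathbb{T}^2)$ by duality with test functions $\phi\in H^\beta(\mathbb{T}^2)$. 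The restriction $\beta > 3$ is dictated by the Sobolev embedding $H^\beta\hookrightarrow C^2(\mathbb{T}^2)$ valid in two dimensions, giving $\|\phi\|_{C^2}\leq C\|\phi\|_{H^\beta}$ and thus uniform pointwise bounds on both $\nabla\phi$ and $\nabla^2\phi$.

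The drift part is handled pathwise by integration by parts using the divergence-freeness of $u_n$, together with the Biot--Savart bound $\|u_n\|_{L^2}\leq C\|\xi_n\|_{L^2}$ and the energy estimate \eqref{uniform}, yielding $|\langle \Pi_n(u_n\cdot\nabla\xi_n), \phi\rangle| = |\langle u_n\xi_n, \nabla\Pi_n\phi\rangle| \leq C\|\xi_0\|_{L^2}^2\|\nabla\phi\|_{L^\infty}$, so that this contribution is bounded by $C\theta$ in $H^{-\beta}$. For the martingale part, Burkholder--Davis--Gundy for Poisson stochastic integrals reduces matters to estimating
\begin{equation*}
\mathbb{E}\sum_k\int_{\tau_n}^{\tau_n+\theta}\!\!\int_{|z|\leq 1}\|e^{-z\theta_k A_k^n}\xi_n(s) - \xi_n(s)\|_{H^{-\beta}}^2\,\nu(dz)\,ds.
\end{equation*}
A first-order Taylor expansion of the matrix exponential, combined with the antisymmetry of $A_k^n$ on $H_n$ (so that $e^{sz\theta_k A_k^n}$ is an $L^2$-isometry commuting with $A_k^n$), yields by a dual pairing $\|e^{-z\theta_k A_k^n}\xi_n - \xi_n\|_{H^{-\beta}}\leq C|z\theta_k|\|\xi_0\|_{L^2}$; summation via $\sum_k\theta_k^2 = 1$ and $\int_{|z|\leq 1}z^2\,\nu(dz) < \infty$ closes the bound. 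For the compensator, a second-order Taylor expansion of $e^{z\theta_k A_k^n}\Pi_n\phi$ produces a remainder involving $(A_k^n)^2\Pi_n\phi$, which I would control in $L^2$ using the identity $\sigma_k\cdot\nabla\sigma_k = 0$ and $\|\phi\|_{C^2}\leq C\|\phi\|_{H^\beta}$, yielding a pathwise $H^{-\beta}$-bound of order $\theta$.

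The main obstacle is the compensator estimate. The clean cancellation $\sigma_k\cdot\nabla(\sigma_k\cdot\nabla\phi) = (\sigma_k\otimes\sigma_k):\nabla^2\phi$, used in Lemma \ref{lem-levy} to obtain a $|k|$-free bound on the second-order remainder, does not apply directly to $(A_k^n)^2 = \Pi_n(\sigma_k\cdot\nabla)\Pi_n(\sigma_k\cdot\nabla)$ because of the intermediate Fourier truncation $\Pi_n$. The resulting correction term involves $\nabla\sigma_k$, whose $L^\infty$-norm grows like $|k|$; controlling it uniformly in the Galerkin index $n$ relies on the standing assumption from Subsection \ref{subs-struc-noise} that $\theta$ has only finitely many nonzero components, so that the values of $|k|$ on the support of $\theta$ are bounded independently of $n$.
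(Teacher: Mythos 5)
Your proposal follows the same skeleton as the paper's proof: apply Lemma \ref{adlous condition1} to the three-term decomposition of the increment coming from \eqref{Hn-equ-2}, bound the drift pathwise, the compensated Poisson integral via Burkholder--Davis--Gundy and a first-order expansion, and the compensator via a second-order expansion transferred onto the test function by the antisymmetry of $A_k^n$; your drift and martingale estimates are correct (one small care: since $\Pi_n$ is not bounded on $L^\infty$, the pointwise bounds must be applied to $\Pi_n\phi$ through $\|\Pi_n\phi\|_{H^\beta}\le\|\phi\|_{H^\beta}$ and the embedding $H^\beta\hookrightarrow C^2(\mathbb{T}^2)$, which is exactly how your duality route goes). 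The genuine difference is the compensator. You absorb the $|k|$-growth coming from $\nabla\sigma_k$ (caused by the intermediate truncation in $(A_k^n)^2$) by invoking the standing assumption that $\theta$ has finitely many nonzero components; this is legitimate within the paper's framework and yields the Aldous condition with a constant depending on $\max\{|k|:\theta_k\ne 0\}$, which suffices for Theorem \ref{thm:weak_solution} since uniformity is only needed in the Galerkin index $n$. The paper instead proves Lemma \ref{lem-double-derivative}: working in the dual Fourier basis $\{e_l\}$, it computes $\Pi_n(\sigma_k\cdot\nabla e_l)$ explicitly in the regimes where the truncation actually acts (the surviving mode carries the factor $(k^\perp\cdot l)^2/|k|^2\le |l|^2$) and obtains $\|\sigma_k\cdot\nabla\Pi_n(\sigma_k\cdot\nabla e_l)\|_{L^2}\le C|l|^2$ uniformly in $k$ and $n$, so the $|k|$-growth you flag is in fact spurious and no finite-support hypothesis is used; the threshold $\beta>3$ then arises from the summability of $\sum_l |l|^{-2\beta+4}$, which in dimension two coincides with the $H^\beta\hookrightarrow C^2$ threshold of your argument. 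The paper's version buys a $k$-uniform, $\theta$-independent constant (and the same projected cancellation is reused later, e.g.\ in \eqref{R-n-prime}), while yours is a more elementary shortcut tied to the finite-dimensionality of the noise.
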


\begin{proof}
In light of Lemma~\ref{adlous condition1}, for each $n\in \mathbb{N}$ and $\delta > 0$, we consider an arbitrary sequence $\{\tau_n\}$ of $\mathcal{F}_t$-stopping times satisfying $\tau_n+\delta\leq T$. By \eqref{Hn-equ-2}, the increment  $\xi_{n}(\tau_n+\delta)-\xi_{n}(\tau_n)$ can be decomposed as follows:
   \begin{align*}
\xi_{n}(\tau_n+\delta)-\xi_{n}(\tau_n)&=-\int_{\tau_n}^{\tau_n+\delta}\Pi_{n}(u_{n}(s)\cdot\nabla\xi_{n}(s))\,ds\\
& \quad+\sum_{k \in \Lambda_n}\int_{\tau_n}^{\tau_n+\delta}\!\! \int_{|z|\leq1}\big[ {\rm e}^{-z \theta_k A_k^n} \xi_n(s-) - \xi_n(s-)\big]\,\tilde{N}_{k}(dz,ds)\\
&\quad+\sum_{k \in \Lambda_n}\int_{\tau_n}^{\tau_n+\delta}\!\! \int_{|z|\leq1} \big[ {\rm e}^{-z \theta_k A_k^n} \xi_n(s) - \xi_n(s) + z \theta_k A_k^n (\xi_n(s)) \big]\,\nu(dz)\,ds.
\end{align*}
We proceed to bound the $H^{-\beta}(\mathbb{T}^2)$-norm of each term individually.

\textbf{(1) Nonlinear drift term.}
Since $u_n$ is divergence-free, we can rewrite $u_n \cdot \nabla \xi_n = \nabla \cdot(u_n \xi_n)$, yielding
\begin{align*}
 \bigg\| \int_{\tau_n}^{\tau_n+\delta} \Pi_n(u_n(s) \cdot \nabla \xi_n(s)) \, ds \bigg\|_{H^{-\beta}} &\leq  \int_{\tau_n}^{\tau_n+\delta} \| u_n(s) \cdot \nabla \xi_n(s) \|_{H^{-\beta}} \, ds \\
&\leq   \int_{\tau_n}^{\tau_n+\delta} \| \nabla \cdot(u_n \xi_n)(s) \|_{H^{-\beta}} \, ds\\
&\leq  \int_{\tau_n}^{\tau_n+\delta} \| u_n(s) \xi_n(s) \|_{H^{-\beta +1}} \, ds.
\end{align*}
Since $\beta>3$, we have $\| u_n(s) \xi_n(s) \|_{H^{-\beta +1}}\leq\| u_n(s) \xi_n(s) \|_{H^{-\frac{1}{2}}}$. Applying Lemma \ref{fenkai}  with $s_1 = \frac{1}{2}$, $s_2 = 0$ and $d=2$ gives
\begin{equation}
\begin{aligned}\label{adlousnonlinear}
 \bigg\| \int_{\tau_n}^{\tau_n+\delta} \Pi_n(u_n(s) \cdot \nabla \xi_n(s)) \, ds \bigg\|_{H^{-\beta}}
&\le C  \int_{\tau_n}^{\tau_n+\delta} \| u_n(s) \|_{H^{\frac{1}{2}}} \| \xi_n(s) \|_{L^2} \,ds \\
&\le C   \int_{\tau_n}^{\tau_n+\delta} \| \xi_n(s) \|_{H^{-\frac{1}{2}}} \| \xi_n(s) \|_{L^2} \,ds \\
&\le C  \delta \| \xi_0 \|_{L^2}^2,
\end{aligned}
\end{equation}
where in the last step we have used \eqref{uniform}.

\textbf{(2) L\'evy correction term.} Consider
\begin{align*}
L_{n} &:=  \bigg\|  \sum_{k \in \Lambda_n} \int_{\tau_n}^{\tau_n+\delta}\!\! \int_{|z| \leq 1} \big[ {\rm e}^{-z \theta_k A_k^n} \xi_n(s) - \xi_n(s) + z \theta_k A_k^n (\xi_n(s)) \big] \,{\nu} (dz)ds \bigg\|_{H^{-\beta}}\\
&\le  \sum_{k \in \Lambda_n}  \int_{\tau_n}^{\tau_n+\delta}\!\! \int_{|z| \leq 1} \big\| {\rm e}^{-z \theta_k A_k^n} \xi_n(s) - \xi_n(s) + z \theta_k A_k^n (\xi_n(s)) \big\|_{H^{-\beta}}  \,{\nu} (dz) ds\\
&\le \sum_{k \in \Lambda_n}  \int_{\tau_n}^{\tau_n+\delta}\!\! \int_{|z| \leq 1} \bigg( \sum_{l\in \Lambda_n} \frac{1}{|l|^{2\beta}}\Big| \big\langle {\rm e}^{-z \theta_k A_k^n} \xi_n(s) - \xi_n(s) + z \theta_k A_k^n (\xi_n(s)),e_l \big\rangle\Big|^2 \bigg)^{\frac{1}{2}}\,\nu(dz)ds.
\end{align*}
Using the anti-symmetry property of the matrix $A_k^n$, we have
\begin{align}\label{Ln}
    L_{n}&\le  \sum_{k \in \Lambda_n} \int_{\tau_n}^{\tau_n+\delta}\!\! \int_{|z| \leq 1} \bigg(\sum_{l\in \Lambda_n} \frac{1}{|l|^{2\beta}}\Big| \big\langle \xi_n(s),{\rm e}^{z \theta_k A_k^n} e_l - e_l -z \theta_k A_k^n e_l \big\rangle\Big|^2 \bigg)^{\frac{1}{2}}\,\nu(dz)ds.
\end{align}
Applying Taylor's formula to $f(t) = {\rm e}^{t z \theta_k A_k^n} e_l$ gives the expansion
 \begin{equation}\label{R_n^l}
    R_n^l:={\rm e}^{z \theta_k A_k^n} e_l-e_l-z \theta_k A_k^n e_l=\theta_k^2 z^2\int_0^1(1-s) {\rm e}^{sz \theta_k A_k^n}(A_k^n)^2 e_l \,ds.
\end{equation}
The antisymmetry of $A_k^n$ implies that ${\rm e}^{sz \theta_k A_k^n}$ is an orthogonal matrix, and therefore preserves the norm of $H_n\subset L^2(\mathbb{T}^2)$, yielding
\begin{equation*}
\begin{aligned}
      \big\|{\rm e}^{sz \theta_k A_k^n} (A_k^n)^2e_l \big\|_{L^2}&=\| (A_k^n)^2e_l \|_{L^2}=\|\Pi_n(\sigma_k\cdot\nabla(A_k^ne_l))\|_{L^2}\le\|\sigma_k\cdot \nabla\Pi_n
      (\sigma_k\cdot \nabla e_l)\|_{L^2}.
\end{aligned}
\end{equation*}
By Lemma \ref{lem-double-derivative} below, we have $\big\|{\rm e}^{-sz \theta_k A_k^n} (A_k^n)^2e_l \big\|_{L^2} \le C|l|^2$ for some constant $C$ independent of $n\ge 1$ and $k\in \Z^2_0$. Combining this estimate with \eqref{R_n^l}, we obtain
\begin{equation*}
    \begin{aligned}
    \|R_n^l\|_{L^2}&\le \theta_k^2 z^2 \int_0^1 (1-s)\big\|{\rm e}^{sz \theta_k A_k^n}(A_k^n)^2  e_l\big\|_{L^2} \,ds\le C \theta_k^2 z^2 |l|^2.
    \end{aligned}
\end{equation*}
Applying this  to \eqref{Ln}, we arrive at
\begin{equation}\label{Levy estimate}
\begin{aligned}
L_n
&\le\sum_{k \in \Lambda_n}   \int_{\tau_n}^{\tau_n+\delta}\!\! \int_{|z|\leq 1} \bigg( \sum_{l\in \Lambda_n} \frac{1}{|l|^{2\beta}}\|\xi_n(s)\|_{L^2}^2\big\|R_n^l\big\|_{L^2}^2 \bigg)^{\frac{1}{2}} \,\nu(dz)ds\\
&\le\sum_{k \in \Lambda_n}   \int_{\tau_n}^{\tau_n+\delta}\!\! \int_{|z|\leq 1} \bigg( \sum_{l\in \Lambda_n} \frac{1}{|l|^{2\beta-4}}\|\xi_0\|_{L^2}^2 C^2 \theta_k^4 z^4  \bigg)^{\frac{1}{2}} \,\nu(dz)ds\\
&\le C'  \delta \|\xi_0 \|_{L^2} \int_{|z|\leq 1} z^2 \, \nu(dz),
\end{aligned}
\end{equation}
 where we used the facts that $\sum_{k \in \Lambda_n}\theta_k^2\le1$ and  $\sum_{l\in \Lambda_n} |l|^{-2\beta+4}$ is convergent for $\beta>3$.

\textbf{(3) Stochastic integral term.} For the martingale term, we apply the Burkholder-Davis-Gundy inequality with stopping times (cf. \cite{BDG}):
\begin{align*}
M_n&:=\mathbb{E}\bigg\| \sum_{k \in \Lambda_n}\int_{\tau_n}^{\tau_n+\delta}\!\! \int_{|z| \leq 1} \big[ {\rm e}^{-z \theta_k A_k^n} \xi_n(s-) - \xi_n(s-)\big] \tilde{N}_k (dz, ds)\bigg\| _{H^{-\beta}} ^2\\
&\le \sum_{k \in \Lambda_n} \mathbb{E} \int_{\tau_n}^{\tau_n+\delta}\!\! \int_{|z|\leq1} \big\|  {\rm e}^{-z \theta_k A_k^n} \xi_n(s) - \xi_n(s) \big\|_{H^{-\beta}}^2 \,\nu(dz)ds.
\end{align*}
By the antisymmetry of $A_k^n$ and the Cauchy-Schwarz inequality, we arrive at
\begin{equation}\label{M0}
    \begin{aligned}
    M_n&\le \sum_{k \in \Lambda_n} \mathbb{E} \int_{\tau_n}^{\tau_n+\delta}\!\! \int_{|z| \leq 1} \sum_{l\in \Lambda_n} \frac{1}{|l|^{2\beta}}\big| \big\langle {\rm e}^{-z \theta_k A_k^n} \xi_n(s) - \xi_n(s) ,e_l \big\rangle\big|^2 \,\nu(dz)ds\\
    &\le \sum_{k \in \Lambda_n} \mathbb{E}\int_{\tau_n}^{\tau_n+\delta}\!\! \int_{|z| \leq 1} \sum_{l\in \Lambda_n} \frac{1}{|l|^{2\beta}}\big| \big\langle  \xi_n(s) ,{\rm e}^{z \theta_k A_k^n}e_l- e_l\big\rangle\big|^2 \,\nu(dz)ds\\
    &\le \sum_{k \in \Lambda_n} \mathbb{E} \int_{\tau_n}^{\tau_n+\delta}\!\! \int_{|z| \leq 1} \sum_{l\in \Lambda_n} \frac{1}{|l|^{2\beta}} \| \xi_n(s) \|_{L^2}^2 \big\|{\rm e}^{z \theta_k A_k^n}e_l- e_l\big\|_{L^2}^2 \,\nu(dz)ds.
\end{aligned}
\end{equation}
The fundamental theorem of calculus leads to
\begin{equation*}
    {\rm e}^{z \theta_k A_k^n} e_l-e_l=\theta_k z\int_0^1{\rm e}^{sz \theta_k A_k^n}( A_k^n  e_l) \,ds.
\end{equation*}
Using again the fact that ${\rm e}^{z \theta_k A_k^n}$ preserves the norm of $H_n\subset L^2(\T^2)$, we obtain
\begin{equation}\label{R_n-taylor}
\begin{aligned}
     \big\|{\rm e}^{z \theta_k A_k^n}e_l- e_l\big\|_{L^2}^2& \le \theta_k^2 z^2 \int_0^1 \|{\rm e}^{sz \theta_k A_k^n} ( A_k^n e_l )\|_{L^2}^2  \,ds\\
     &\le \theta_k^2 z^2  \| A_k^n e_l \|_{L^2}^2 =  \theta_k^2 z^2  \| \Pi_n(\sigma_k \cdot\nabla e_l )\|_{L^2}^2 \le C \theta_k^2 z^2 |l|^2.
\end{aligned}
\end{equation}
Inserting this into \eqref{M0} gives us
\begin{equation}\label{martingale}
\begin{aligned}
 M_n
  &\le C\sum_{k \in \Lambda_n} \mathbb{E} \int_{\tau_n}^{\tau_n+\delta}\!\! \int_{|z| \leq 1} \sum_{l\in \Lambda_n} \frac{1}{|l|^{2\beta}} \| \xi_0\|_{L^2}^2 \theta^2_kz^2|l|^2 \,\nu(dz)ds\\
   &\le \tilde{C} \delta \| \xi_0\|_{L^2}^2\int_{|z|\le1}z^2\,\nu(dz).
\end{aligned}
\end{equation}
To sum up, the estimates \eqref{adlousnonlinear}, \eqref{Levy estimate} and \eqref{martingale} imply that $\{\xi_n\}_n$ satisfies the hypotheses of Lemma~\ref{adlous condition1}. Therefore, the sequence $\{\xi_n\}_n$ satisfies the Aldous condition in $H^{-\beta}(\mathbb{T}^2)$.
\end{proof}

\begin{lemma}\label{lem-double-derivative}
There exists a constant $C>0$ such that for any $n\ge 1$ and $k\in \Z^2_0$, it holds
\begin{equation}\label{l2}
    \|\sigma_k\cdot \nabla\Pi_n
      (\sigma_k\cdot \nabla e_l)\|_{L^2} \le C|l|^2 \quad \mbox{for all } l\in \Z^2_0.
\end{equation}
\end{lemma}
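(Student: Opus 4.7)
\medskip

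\noindent\emph{Proof plan.} The strategy is to work directly with the explicit Fourier representation of $\sigma_k$ and $e_l$ and to exploit the divergence-free condition $a_k\cdot k = 0$ to control the second application of $\sigma_k\cdot\nabla$.

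First I would compute $\sigma_k\cdot\nabla e_l$ explicitly. Since $\sigma_k(x)=a_k e_k(x)$ is (up to the constant vector $a_k$) a scalar trigonometric function of $2\pi k\cdot x$, and $\nabla e_l$ equals $\pm 2\pi l$ times a trigonometric function of $2\pi l\cdot x$, the product-to-sum identities yield
\[
 \sigma_k\cdot\nabla e_l \;=\; \alpha_{k,l}\, e_{m_1} \,+\, \beta_{k,l}\, e_{m_2},
\]
where $m_1\in\{\pm(k+l)\}$, $m_2\in\{\pm(k-l)\}$ depend on whether each of $k,l$ lies in $\Z^2_+$ or $\Z^2_-$, and the coefficients satisfy $|\alpha_{k,l}|,|\beta_{k,l}|\le C|a_k\cdot l|\le C|l|$ for a universal $C$. (Here we use that both trigonometric factors are bounded by $\sqrt{2}$, while only the factor $|a_k\cdot l|$ carries the frequency.) Thus $\sigma_k\cdot\nabla e_l$ is always a linear combination of at most two orthonormal basis functions, whose frequency labels are $\pm(k\pm l)$ and whose coefficients have size at most $C|l|$.

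Next, $\Pi_n$ acts simply by retaining those among the two basis functions whose index has modulus at most $n$ and discarding the rest, so $\Pi_n(\sigma_k\cdot\nabla e_l)$ is again a linear combination of at most two basis elements, with the same coefficient bound. Applying the first step again (with $l$ replaced by $l+k$ or $l-k$) shows that $\sigma_k\cdot\nabla e_{l\pm k}$ is itself a linear combination of at most two basis elements, with coefficients bounded by $C|a_k\cdot(l\pm k)|$. This is precisely where the divergence-free condition $a_k\cdot k = 0$ enters: it gives $a_k\cdot(l\pm k) = a_k\cdot l$, so the new coefficients are again bounded by $C|l|$, independently of $|k|$.

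Assembling the two steps, $\sigma_k\cdot\nabla\Pi_n(\sigma_k\cdot\nabla e_l)$ is a linear combination of at most four elements of the orthonormal basis $\{e_m\}$, each with coefficient of modulus at most $C^2|l|^2$. By orthonormality,
\[
 \|\sigma_k\cdot\nabla\Pi_n(\sigma_k\cdot\nabla e_l)\|_{L^2} \;\le\; 2C^2|l|^2,
\]
which is the claimed estimate with a constant independent of $n\ge 1$ and $k\in\Z^2_0$.

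The only real obstacle is the bookkeeping of the four cases $(k,l)\in\Z^2_\pm\times\Z^2_\pm$ determining which trigonometric identities produce sines versus cosines and the precise signs of the output frequencies; this is routine. The one genuinely nontrivial point is the use of $a_k\cdot k = 0$ in the second differentiation: without it the coefficient after the second step would be proportional to $|l\pm k|$, producing an additional $|k|$ factor that would destroy the uniformity in $k$ needed in Proposition~\ref{aldous-condition}.
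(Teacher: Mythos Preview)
Your proof is correct and rests on the same key point as the paper's: the divergence-free condition $a_k\cdot k=0$ forces the coefficient arising from the second application of $\sigma_k\cdot\nabla$ to be $|a_k\cdot(l\pm k)|=|a_k\cdot l|\le|l|$ rather than $|l\pm k|$, which is what gives uniformity in $k$. The paper organizes the argument as a case analysis on which of the two Fourier modes $l\pm k$ survive $\Pi_n$; in the case where both survive it avoids explicit Fourier computation by invoking the operator identity $\sigma_k\cdot\nabla\sigma_k=0$ (equivalent to your $a_k\cdot k=0$) and bounding by $\|\sigma_k\|_{L^\infty}^2\|\nabla^2 e_l\|_{L^2}$, while in the mixed cases it does the same explicit trigonometric computation you describe. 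Your uniform Fourier-coefficient treatment handles all cases at once and makes the role of $a_k\cdot k=0$ more transparent; the paper's version trades that uniformity for a slightly softer argument in the unprojected case.
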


\begin{proof}

Without loss of generality, we may restrict our attention to the case where $k\in \mathbb{Z}_+^2$ and $l\in \mathbb{Z}_+^2$, as the remaining cases follow by similar arguments.

Under the constraints $|k+l|\le n$ and $|k-l|\le n$, we have $\sigma_k\cdot \nabla e_l\in H_n$, from which it follows that
\begin{equation*}
\begin{aligned}
     \|\sigma_k\cdot \nabla\Pi_n
      (\sigma_k\cdot \nabla e_l)\|_{L^2} &= \|\sigma_k\cdot \nabla
      (\sigma_k\cdot \nabla e_l)\|_{L^2}=\|(\sigma_k\cdot \nabla
      \sigma_k) \cdot \nabla e_l +\sigma_k\cdot (\sigma_k\cdot\nabla
      ( \nabla e_l))\|_{L^2}.
 \end{aligned}
\end{equation*}
Note that $\sigma_k\cdot \nabla \sigma_k\equiv0 $, thus
\begin{equation*}
\begin{aligned}
   \|\sigma_k\cdot \nabla\Pi_n
      (\sigma_k\cdot \nabla e_l)\|_{L^2} =\|\sigma_k\cdot (\sigma_k\cdot\nabla
      ( \nabla e_l))\|_{L^2}\le \|\sigma_k\|_{L^\infty}^2 \|\nabla
      ( \nabla e_l) \|_{L^2}\le C|l|^2.
\end{aligned}
\end{equation*}

We now turn to the case where $|k+l| > n$ and $|k-l| \le n$. Here, the projection is given by
\[
\Pi_n (\sigma_k\cdot \nabla e_l) = -2\pi \frac{k^\perp\cdot l}{|k|}\sin(2\pi(l-k)\cdot x),
\]
which implies
\[
\nabla \left( \Pi_n (\sigma_k\cdot \nabla e_l) \right) = -4\pi^2 \frac{k^\perp\cdot l}{|k|} (l-k) \cos(2\pi(l-k)\cdot x).
\]
A direct computation then shows
\begin{equation}\label{B1}
    \begin{aligned}
        \sigma_k\cdot\nabla \left( \Pi_n (\sigma_k\cdot \nabla e_l) \right)
        &= -4\sqrt{2}\,\pi^2 \frac{(k^\perp\cdot l)(k^\perp\cdot (l-k))}{|k|^2} \cos(2\pi(l-k)\cdot x)\cos(2\pi k\cdot x) \\
        &= -4\sqrt{2}\,\pi^2 \frac{(k^\perp\cdot l)^2}{|k|^2}\cos(2\pi(l-k)\cdot x)\cos(2\pi k\cdot x),
    \end{aligned}
\end{equation}
from which we derive the estimate
\[
\left\| \sigma_k\cdot\nabla \left( \Pi_n (\sigma_k\cdot \nabla e_l) \right) \right\|_{L^2} \le C |l|^2.
\]

The case $|k+l|\le n$ and $|k-l|> n$ follows similarly from a computation analogous to \eqref{B1}, leading to the same bound.

In the remaining case where $|k+l| > n$ and $|k-l| > n$, we have
\[
\left\| \sigma_k\cdot\nabla \left( \Pi_n (\sigma_k\cdot \nabla e_l) \right) \right\|_{L^2} = 0,
\]
and thus the estimate is trivially satisfied.

The proof of \eqref{l2} is thus completed by combining these three cases.
\end{proof}

The Aldous condition shown in Proposition~\ref{aldous-condition}, in conjunction with the compactness criterion in Lemma~\ref{compact1}, yields the desired tightness result.

\begin{proposition}\label{compact2}
The family of laws of $\{\xi_n\}_{n}$ is tight in $D([0,T];H^{-\beta}(\mathbb{T}^2))$. Specifically, for every $\varepsilon>0$, there exists a compact subset $K_\varepsilon$ of $D([0,T];H^{-\beta}(\mathbb{T}^2))$ such that
  $$\mathbb{P}(\xi_n \in K_\varepsilon)\geq 1-\varepsilon \quad \mbox{for all } n\ge 1.$$
\end{proposition}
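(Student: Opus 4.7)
The plan is to combine the Aldous condition (Proposition \ref{aldous-condition}) with the uniform energy estimate (Proposition \ref{xi-Linf-L2}) and invoke the compactness criterion of Lemma \ref{compact1}. Concretely, fix $\varepsilon>0$. Since $\{\xi_n\}_n$ satisfies the Aldous condition in $H^{-\beta}(\T^2)$ for $\beta>3$, Lemma \ref{compact-Aldous-1} yields a Borel set $A_\varepsilon\subset D([0,T];H^{-\beta}(\T^2))$ such that $\inf_n \P(\xi_n\in A_\varepsilon)\ge 1-\varepsilon$ and
\[
\lim_{\delta\to 0}\sup_{f\in A_\varepsilon}\omega_{[0,T],H^{-\beta}(\T^2)}(f,\delta)=0.
\]

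Next, define the $L^\infty$-in-time energy ball
\[
B:=\big\{f\in D([0,T];H^{-\beta}(\T^2)):\ \textstyle\sup_{t\in[0,T]}\|f(t)\|_{L^2}\le \|\xi_0\|_{L^2}\big\}.
\]
By Proposition \ref{xi-Linf-L2} we have $\P(\xi_n\in B)=1$ for every $n\ge 1$, so $\P(\xi_n\in A_\varepsilon\cap B)\ge 1-\varepsilon$. I will take $K_\varepsilon$ to be the closure of $A_\varepsilon\cap B$ in $D([0,T];H^{-\beta}(\T^2))$ and check the two items of Lemma \ref{compact1} for the set $A_\varepsilon\cap B$, which is enough to conclude that $K_\varepsilon$ is compact and $\P(\xi_n\in K_\varepsilon)\ge 1-\varepsilon$.

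For item (1) of Lemma \ref{compact1}, take $I=[0,T]$: for every $t\in[0,T]$, the slice $\{f(t):f\in A_\varepsilon\cap B\}$ is contained in the closed $L^2(\T^2)$-ball of radius $\|\xi_0\|_{L^2}$, which is a bounded set in $L^2(\T^2)$ and is therefore relatively compact in $H^{-\beta}(\T^2)$ by the compact embedding $L^2(\T^2)\hookrightarrow H^{-\beta}(\T^2)$ (valid for any $\beta>0$). For item (2), the modulus is monotone in the set, so
\[
\sup_{f\in A_\varepsilon\cap B}\omega_{[0,T],H^{-\beta}(\T^2)}(f,\delta)\le \sup_{f\in A_\varepsilon}\omega_{[0,T],H^{-\beta}(\T^2)}(f,\delta)\xrightarrow[\delta\to 0]{}0,
\]
which is exactly the second condition. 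This proves tightness of $\{Q^n:=\mathrm{Law}(\xi_n)\}$ on $D([0,T];H^{-\beta}(\T^2))$.

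The main conceptual point is that the two ingredients play complementary roles: the uniform $L^2$-bound controls the spatial regularity pointwise in time (yielding compactness of the time-slices through the compact embedding into the weaker space $H^{-\beta}$), while the Aldous condition controls the c\`adl\`ag modulus uniformly. No genuine obstacle is expected; the only mild subtlety is checking that the second modulus condition is inherited by subsets, which is immediate from its definition. Once these two ingredients are in place, the Metivier criterion assembles them into tightness automatically.
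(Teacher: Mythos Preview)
Your proposal is correct and follows essentially the same approach as the paper: combine the Aldous condition (via Lemma~\ref{compact-Aldous-1}) with the uniform $L^2$ energy bound and the compact embedding $L^2(\T^2)\hookrightarrow H^{-\beta}(\T^2)$ to verify the two conditions of Lemma~\ref{compact1}. Your version is marginally cleaner than the paper's, since you use the almost-sure bound from Proposition~\ref{xi-Linf-L2} directly to get $\P(\xi_n\in B)=1$, whereas the paper goes through a Chebyshev step (yielding $\P(\xi_n\in K_\varepsilon)\ge 1-2\varepsilon$ instead of $1-\varepsilon$); this is a cosmetic difference only.
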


\begin{proof}
 An application of Chebyshev's inequality together with \eqref{uniform} gives, for any $R > 0$,
\begin{equation*}
        \mathbb{P}\Big( \sup_{s \in [0,T]}\|\xi_n(s)\|_{L^{2}}>R\Big)\leq \frac{\mathbb{E}\Big( \sup_{s \in [0,T]}\|\xi_n(s)\|_{L^{2}}^2\Big)}{R^2} \leq\frac{C}{R^2}.
    \end{equation*}
Set $R_1 = \sqrt{C / \varepsilon}$ and define
\[
B_{\varepsilon} := \Big\{ f \in D([0,T]; L^2(\mathbb{T}^2)) : \sup_{t \in [0,T]} \|f(t)\|_{L^2(\mathbb{T}^2)} \leq R_1 \Big\},
\]
whence
\begin{equation}\label{B}
\mathbb{P}(\xi_n \in B_\varepsilon) \ge 1 - \varepsilon.
\end{equation}
The compact embedding $L^2(\mathbb{T}^2) \hookrightarrow H^{-\beta}(\mathbb{T}^2)$ ensures that $\{f(t) : f \in B_\varepsilon\}$ has compact closure in $H^{-\beta}(\mathbb{T}^2)$ for all $t$, verifying condition (1) of Lemma~\ref{compact1}.

Moreover, it is shown in Proposition \ref{aldous-condition} that the sequence $\{\xi_n \}_n$ satisfies the Aldous condition, thus by Lemma~\ref{compact-Aldous-1}, there exists $A_\varepsilon \subset D([0,T]; H^{-\beta}(\mathbb{T}^2))$ with
\begin{equation}\label{A}
\mathbb{P}(\xi_n \in A_\varepsilon) \geq 1 - \varepsilon
\end{equation}
and $\lim_{\delta \to 0} \sup_{f \in A_\varepsilon} \omega_{[0,T], H^{-\beta}(\mathbb{T}^2)}(f, \delta) = 0$, thus satisfying condition (2) of Lemma~\ref{compact1}.

Denoting $K_\varepsilon = \overline{A_\varepsilon \cap B_\varepsilon}$, Lemma~\ref{compact1} implies the compactness of $K_\varepsilon$. Since $\mathbb{P}(\xi_n \in K_\varepsilon) \geq 1 - 2\varepsilon$ by \eqref{B} and \eqref{A}, it then follows from Prohorov's theorem that the laws of $\{\xi_n\}_n$ are tight in $D([0,T]; H^{-\beta}(\mathbb{T}^2))$.
\end{proof}

Before applying the Skorohod theorem, we need to consider the sequence of approximate solutions $\{\xi_n\}_n$ together with the driving L\'evy processes $Z:= \{Z^k\}_{k}$. For each $n\ge 1$, let $P_n$ be the joint law of the couple $(\xi_n, Z)$, with state space
\[
\mathcal Y =  D\big([0,T]; H^{-\beta}(\mathbb{T}^2)\times \R^{\Z^2_0} \big).
\]
Then by the above proposition, the family of laws $\{P_n\}_n$ is tight on $\mathcal Y$. Thus there exists a subsequence, not relabeled for simplicity of notation, converging weakly, in the topology of $\mathcal Y$, to some probability measure $P_0$.

By Skorohod's representation theorem, we can find a new probability space $(\Omega', \mathcal{F}', \mathbb{P}')$, together with processes $(\xi'_n, Z'_n )$ and $(\xi', Z')$, satisfying the following properties:
\begin{enumerate}
    \item[(1)] $(\xi'_n, Z'_n ) \overset{\mathcal L}{=} P_n$ for each $n \in \mathbb{N}$, and $(\xi', Z') \overset{\mathcal L}{=} P_0$;
    \item[(2)] $\mathbb{P}'$-a.s., $(\xi'_n, Z'_n ) \to (\xi', Z')$ as $n\to \infty$ in the topology of $ \mathcal{Y}= D\big([0,T]; H^{-\beta}(\mathbb{T}^2)\times \R^{\Z^2_0} \big)$.
\end{enumerate}
In particular, for any $n\ge 1$, $Z'_n= \big\{Z_n^{\prime k} \big\}_k$ consists a family of independent L\'evy processes with the same L\'evy measure $\nu$, and we have
  \begin{equation}\label{Levy-processes}
  Z_n^{\prime k}(t)= \int_0^t \int_{|z|\le 1} z\, \tilde N_n^{\prime k}(dz, ds),
  \end{equation}
where $\tilde N_n^{\prime k}(dz, ds)= N_n^{\prime k}(dz, ds) - \nu(dz)ds$ is the compensated Poisson random measure corresponding to $Z_n^{\prime k}(t)$. For notational simplicity, we omit the prime and denote these sequences simply as $(\xi_n, Z_n )$ and $(\xi, Z)$ in what follows, but we will keep the notations $\omega'\in \Omega'$ and $\mathbb{P}'$ to remind the reader that we are working on a new probability space.

In the remaining part of this subsection, we will  fix a $\mathbb{P'}$-a.s. $\omega' \in \Omega'$ and regard $\xi_n,\xi$ as deterministic objects. We want to show that the convergence $\xi_n\to \xi$ in (2) can be improved from $H^{-\beta}(\mathbb{T}^2)$ to $H^{-\varepsilon}(\mathbb{T}^2)$, thanks to the uniform boundedness of $\{\xi_n\}_n$ in ${L^{\infty}([0,T]; L^2(\mathbb{T}^2))}$.

\begin{proposition}\label{qianrun}
Suppose $\xi_n$ satisfies the following conditions:
\begin{enumerate}[label=(\arabic*), font=\upshape]
    \item $\xi_n\rightarrow \xi$ in $D([0,T]; H^{-\beta}(\mathbb{T}^2))$;
    \item $\{\xi_n\}_n$ is bounded uniformly in ${L^{\infty}([0,T]; L^2(\mathbb{T}^2))}$.
\end{enumerate}
Then $\xi\in L^{\infty}([0,T]; L^2(\mathbb{T}^2))$ and for any $0<\varepsilon<\beta$,
    $\xi_n \rightarrow \xi$ in $D([0,T]; H^{-\varepsilon}(\mathbb{T}^2)).$
\end{proposition}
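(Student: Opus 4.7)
The strategy is a standard interpolation-with-Skorohod-time-change argument: first upgrade condition (2) to an $L^\infty_t L^2_x$ bound on the limit $\xi$, then use Lemma~\ref{neicha} to interpolate between $H^{-\beta}$ and $L^2$ on the difference $\xi_n - \xi$ after a uniformly small time reparametrization.

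The plan is as follows. By condition~(1) together with criterion~(c) of the Skorohod characterization (with $S = H^{-\beta}(\mathbb{T}^2)$), I obtain a sequence $\{\lambda_n\} \subset \Lambda'$ with $\sup_{t \in [0,T]} |\lambda_n(t) - t| \to 0$ and
\begin{equation*}
\alpha_n := \sup_{t \in [0,T]} \|\xi_n(t) - \xi(\lambda_n(t))\|_{H^{-\beta}} \longrightarrow 0.
\end{equation*}
Set $M := \sup_n \sup_{t \in [0,T]} \|\xi_n(t)\|_{L^2} < \infty$. For each continuity point $t$ of $\xi$, Proposition~\ref{D-con-0} gives $\xi_n(t) \to \xi(t)$ in $H^{-\beta}$, while $\|\xi_n(t)\|_{L^2} \le M$. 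Since $L^2(\mathbb{T}^2)$ is a Hilbert space, a subsequence of $\xi_n(t)$ converges weakly in $L^2$ to some $y \in L^2$ with $\|y\|_{L^2} \le M$; the continuous embedding $L^2 \hookrightarrow H^{-\beta}$ identifies $y$ with the $H^{-\beta}$-limit $\xi(t)$, so $\|\xi(t)\|_{L^2} \le M$. For an arbitrary $t \in [0,T]$, right-continuity of $\xi$ in $H^{-\beta}$ and density of continuity points let me pick continuity points $s_k \downarrow t$; weak lower semicontinuity of the $L^2$-norm along the $H^{-\beta}$-convergent sequence $\xi(s_k)$ then yields $\|\xi(t)\|_{L^2} \le M$, giving $\xi \in L^\infty([0,T]; L^2(\mathbb{T}^2))$.

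With that in hand, apply the interpolation inequality of Lemma~\ref{neicha} with $s = -\beta$, $t = 0$, and $\theta = \varepsilon/\beta \in (0,1)$ to each function $\xi_n(t) - \xi(\lambda_n(t))$:
\begin{equation*}
\|\xi_n(t) - \xi(\lambda_n(t))\|_{H^{-\varepsilon}}
\le \|\xi_n(t) - \xi(\lambda_n(t))\|_{H^{-\beta}}^{\varepsilon/\beta}\,
\|\xi_n(t) - \xi(\lambda_n(t))\|_{L^2}^{1 - \varepsilon/\beta}.
\end{equation*}
The $L^2$-factor is bounded by $\|\xi_n(t)\|_{L^2} + \|\xi(\lambda_n(t))\|_{L^2} \le 2M$ uniformly in $n$ and $t$, so taking the supremum over $t \in [0,T]$ gives
\begin{equation*}
\sup_{t \in [0,T]} \|\xi_n(t) - \xi(\lambda_n(t))\|_{H^{-\varepsilon}}
\le \alpha_n^{\varepsilon/\beta}\, (2M)^{1 - \varepsilon/\beta} \longrightarrow 0.
\end{equation*}
Applying criterion~(c) once more, now in the target space $H^{-\varepsilon}(\mathbb{T}^2)$, yields $\xi_n \to \xi$ in $D([0,T]; H^{-\varepsilon}(\mathbb{T}^2))$.

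The only real subtlety is the first step, namely promoting the pointwise weak-$L^2$ bound at continuity points to the full interval $[0,T]$. It is mild because $\xi$ is c\`adl\`ag in $H^{-\beta}$ and continuity points are dense, so weak lower semicontinuity of the $L^2$-norm transports the bound to every $t$. Everything else reduces to a clean interpolation between strong $H^{-\beta}$-convergence and a uniform $L^2$-bound, with the Skorohod time-change handled once and reused in both the source and target topologies.
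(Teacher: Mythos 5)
Your proposal is correct and follows essentially the same route as the paper: extract the time-changes $\lambda_n$ from the Skorohod criterion, establish $\xi\in L^{\infty}([0,T];L^2(\mathbb{T}^2))$ from the uniform bound, and then interpolate via Lemma~\ref{neicha} between the vanishing $H^{-\beta}$-distance and the uniform $L^2$-bound before invoking criterion~(c) again in $H^{-\varepsilon}$. The only (minor) difference is in the first step, where you argue pointwise via weak $L^2$-compactness at continuity points and right-continuity, while the paper uses weak-$\ast$ compactness of $\{\xi_n\circ\lambda_n\}_n$ in $L^{\infty}([0,T];L^2(\mathbb{T}^2))$ and identifies the limit with $\xi$; both are standard and equally valid.
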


\begin{proof}
Since $\xi_n \rightarrow \xi$ in $D([0,T]; H^{-\beta}(\mathbb{T}^2))$, there exists a sequence of time-change functions $\lambda_n \in \Lambda$ (where $\Lambda$ denotes the collection of strictly increasing continuous maps from $[0,T]$ to itself defined in Section \ref{space D}) such that
\begin{enumerate}
    \item[(a)] $\lim_{n \to \infty} \sup_{t \in [0,T]} |\lambda_n(t) - t| = 0$;
    \item[(b)] $\lim_{n \to \infty} \sup_{t \in [0,T]} \big\|\xi_n(\lambda_n(t)) - \xi(t)\big\|_{H^{-\beta}} = 0$.
\end{enumerate}

The uniform bound on $\{\xi_n\}_n$ in $L^{\infty}([0,T]; L^2(\mathbb{T}^2))$ readily extends to the time-changed sequence $\{\xi_n \circ \lambda_n\}_n$. By the Banach-Alaoglu theorem, we may extract a subsequence $\{\xi_{n_k}\}$ and identify a limit $\overline{\xi} \in L^{\infty}([0,T]; L^2(\mathbb{T}^2))$ such that
\[
\xi_{n_k} \circ \lambda_{n_k}\rightharpoonup \overline{\xi} \quad \text{weakly-$\ast$ in } L^{\infty}([0,T]; L^2(\mathbb{T}^2)).
\]
Furthermore, condition (b) ensures that
\[
\xi_{n_k} \circ \lambda_{n_k} \to \xi \quad \text{in } L^{\infty}([0,T]; H^{-\beta}(\mathbb{T}^2)),
\]
which implies $\overline{\xi} = \xi$. Consequently, we deduce that $\xi \in L^{\infty}([0,T]; L^2(\mathbb{T}^2))$.

We now turn to the convergence in $D([0,T]; H^{-\varepsilon}(\mathbb{T}^2))$. The uniform boundedness of $\{\xi_n \circ \lambda_n\}_n$ and $\xi$ in $L^{\infty}([0,T]; L^2(\mathbb{T}^2))$ implies that the quantity
\[
\|\xi_n(\lambda_n(t)) - \xi(t)\|_{L^2}^{1 - \varepsilon/\beta}
\]
is uniformly bounded in $n \in \mathbb{N}$ and $t \in [0,T]$. Applying Lemma~\ref{neicha}, for all $n$ and $t$, we obtain
\[
\begin{aligned}
\big\|\xi_n(\lambda_n(t)) - \xi(t)\big\|_{H^{-\varepsilon}}
&\leq \big\|\xi_n(\lambda_n(t)) - \xi(t)\big\|_{H^{-\beta}}^{\varepsilon/\beta} \big\|\xi_n(\lambda_n(t)) - \xi(t)\big\|_{L^2}^{1 - \varepsilon/\beta} \\
&\leq \widetilde{C} \sup_{t\in[0,T]}\big\|\xi_n(\lambda_n(t)) - \xi(t)\big\|_{H^{-\beta}}^{\varepsilon/\beta},
\end{aligned}
\]
where $C$ and $\widetilde{C}$ do not depend on  $n$ and $t$. We therefore conclude that
\[
\xi_n \to \xi \quad \text{in } D([0,T]; H^{-\varepsilon}(\mathbb{T}^2)). \qedhere
\]
\end{proof}

The following corollary yields the convergence of the corresponding velocity fields.
\begin{corollary}\label{velocity-convergence}
Given $\xi_n \to \xi$   in $D([0,T]; H^{-\varepsilon}(\mathbb{T}^2))$, let $u_n = -\nabla^\perp (-\Delta)^{-1}\xi_n$ and $u = -\nabla^\perp (-\Delta)^{-1}\xi$ denote the corresponding velocity fields obtained via the Biot-Savart operator. Then $u_n \to u$   in $D([0,T]; H^{1 - \varepsilon}(\mathbb{T}^2))$.
\end{corollary}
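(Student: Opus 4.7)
The proof is essentially a transfer of Skorohod convergence through the continuous linear operator $K = -\nabla^\perp(-\Delta)^{-1}$, and the main task is simply to check that this operator gains one derivative.

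The plan is as follows. First I would record the mapping property of the Biot-Savart operator on $\mathbb{T}^2$: since $(-\Delta)^{-1}$ gains two derivatives (in the Fourier side it multiplies the $k$-th coefficient by $|2\pi k|^{-2}$) and $\nabla^\perp$ loses one, the map $K$ is a bounded linear operator from $H^s(\mathbb{T}^2)$ into $H^{s+1}(\mathbb{T}^2)$ for every $s \in \mathbb{R}$, with operator norm $C_K$ independent of $s$ in the range of interest. In particular, for any two distributions $\eta_1, \eta_2 \in H^{-\varepsilon}(\mathbb{T}^2)$,
\[
\|K\eta_1 - K\eta_2\|_{H^{1-\varepsilon}} \le C_K \|\eta_1 - \eta_2\|_{H^{-\varepsilon}}.
\]

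Next I would invoke the characterization of Skorohod convergence (part (c) of the proposition in Section~\ref{space D}): since $\xi_n \to \xi$ in $D([0,T]; H^{-\varepsilon}(\mathbb{T}^2))$, there exists a sequence $\{\lambda_n\} \subset \Lambda'$ with $\sup_{t \in [0,T]} |\lambda_n(t) - t| \to 0$ such that
\[
\sup_{t \in [0,T]} \|\xi_n(t) - \xi(\lambda_n(t))\|_{H^{-\varepsilon}} \longrightarrow 0.
\]
Applying $K$ and using its continuity from $H^{-\varepsilon}$ to $H^{1-\varepsilon}$, for the \emph{same} time-change sequence $\{\lambda_n\}$ we obtain
\[
\sup_{t \in [0,T]} \|u_n(t) - u(\lambda_n(t))\|_{H^{1-\varepsilon}} \le C_K \sup_{t \in [0,T]} \|\xi_n(t) - \xi(\lambda_n(t))\|_{H^{-\varepsilon}} \longrightarrow 0.
\]
Invoking again part (c) of the proposition in Section~\ref{space D}, this gives exactly $u_n \to u$ in $D([0,T]; H^{1-\varepsilon}(\mathbb{T}^2))$, as desired.

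There is no real obstacle here; the only thing one needs to be careful about is that the time reparametrization $\lambda_n$ is chosen in $H^{-\varepsilon}$ and we have to use the \emph{same} $\lambda_n$ to verify convergence in $H^{1-\varepsilon}$, which works because $K$ is a time-independent continuous linear operator and does not interact with the time variable at all. One may also note, as a sanity check, that $u = K\xi \in L^\infty([0,T]; H^{1-\varepsilon}(\mathbb{T}^2))$ automatically from $\xi \in L^\infty([0,T]; L^2(\mathbb{T}^2)) \hookrightarrow L^\infty([0,T]; H^{-\varepsilon}(\mathbb{T}^2))$, so the limit object lies in the expected space.
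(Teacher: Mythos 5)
Your proposal is correct and is exactly the argument the paper leaves implicit: the corollary is stated without proof because the Biot--Savart map $K=-\nabla^\perp(-\Delta)^{-1}$ is a bounded linear operator from $H^{-\varepsilon}(\mathbb{T}^2)$ (zero-mean) to $H^{1-\varepsilon}(\mathbb{T}^2)$, and composing with a time-independent continuous map preserves Skorohod convergence via the same time changes $\lambda_n$. Your use of condition (c) of the convergence characterization in Section~\ref{space D} is precisely the intended justification.
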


We conclude this section by verifying convergence of $\{\xi_n\}_n$ in $L^p([0,T]; H^{-\varepsilon}(\mathbb{T}^2)),\,\forall p \ge1$, which will be used for proving the existence of weak solutions.

\begin{proposition}\label{L1L2}
If $\xi_n \to \xi$   in $D([0,T]; H^{-\varepsilon}(\mathbb{T}^2))$ for some $0<\varepsilon<1$, then $\xi_n \to \xi$   in $L^p([0,T]; H^{-\varepsilon}(\mathbb{T}^2)),\,\forall p \ge1.$
\end{proposition}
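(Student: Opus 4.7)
My plan is to deduce $L^p$-convergence from two ingredients: pointwise a.e. convergence, which I will extract from the Skorohod convergence via Proposition~\ref{D-con-0}, and a uniform $L^\infty$-in-time bound that comes from the setting in which this proposition is applied. The dominated convergence theorem will then close the argument.

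First, I would invoke Proposition~\ref{D-con-0}: since $\xi_n \to \xi$ in $D([0,T]; H^{-\varepsilon}(\mathbb{T}^2))$, one has
\[
\xi_n(t) \to \xi(t) \quad \text{in } H^{-\varepsilon}(\mathbb{T}^2)
\]
at every continuity point $t$ of $\xi$. Because $\xi$ is c\`adl\`ag with values in the Polish space $H^{-\varepsilon}(\mathbb{T}^2)$, its set of discontinuities is at most countable, hence has zero Lebesgue measure on $[0,T]$. Therefore $\xi_n(t) \to \xi(t)$ in $H^{-\varepsilon}(\mathbb{T}^2)$ for a.e.\ $t\in[0,T]$, so in particular $\|\xi_n(t)-\xi(t)\|_{H^{-\varepsilon}}^p \to 0$ for a.e.\ $t$.

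Next, I would use that this proposition is applied together with Proposition~\ref{qianrun}, so $\{\xi_n\}_n$ is uniformly bounded in $L^\infty([0,T]; L^2(\mathbb{T}^2))$ and the limit $\xi$ belongs to the same space. Combining with the continuous embedding $L^2(\mathbb{T}^2) \hookrightarrow H^{-\varepsilon}(\mathbb{T}^2)$, this yields a constant $M=M(p,\varepsilon,\xi_0)$ such that
\[
\sup_{n\ge 1}\sup_{t\in[0,T]} \|\xi_n(t)-\xi(t)\|_{H^{-\varepsilon}}^p \le M.
\]
Since the constant function $M$ is integrable on $[0,T]$, I can apply the Lebesgue dominated convergence theorem to $t\mapsto \|\xi_n(t)-\xi(t)\|_{H^{-\varepsilon}}^p$ and conclude
\[
\int_0^T \|\xi_n(t)-\xi(t)\|_{H^{-\varepsilon}}^p\,dt \longrightarrow 0,
\]
which is precisely the claim.

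There is no serious obstacle; the only subtlety to flag is that Skorohod convergence in $D([0,T];S)$ does not imply pointwise convergence at every $t$ because of the admissible time reparametrizations $\lambda_n$, but the weaker statement in Proposition~\ref{D-con-0} — convergence at continuity points of the limit — is exactly what is needed, given that a c\`adl\`ag limit has only countably many jumps.
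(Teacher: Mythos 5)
Your proposal is correct and follows essentially the same route as the paper: Proposition~\ref{D-con-0} gives convergence at continuity points of the c\`adl\`ag limit (hence a.e.\ in $t$), the uniform $L^\infty([0,T];L^2)$ bound from the surrounding setting (Propositions~\ref{xi-Linf-L2}/\ref{qianrun}) supplies the dominating constant via the embedding $L^2\hookrightarrow H^{-\varepsilon}$, and dominated convergence concludes. Your explicit remark that the jump set of the c\`adl\`ag limit is countable is a detail the paper leaves implicit, but the argument is the same.
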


\begin{proof}
Recalling Proposition \ref{D-con-0}, we deduce from Proposition \ref{qianrun} that $\|\xi_n(s) - \xi(s)\|_{H^{-\varepsilon}} \to 0$  for a.e. $s \in [0,T]$. By Proposition \ref{xi-Linf-L2}, there exists an $M > 0$ with
  $$\sup_{n \in \mathbb{N}} \sup_{s \in [0,T]} \|\xi_n(s)\|_{H^{-\varepsilon}} \leq M.$$
By the dominated convergence theorem,
  $$\int_0^T \|\xi_n(s) - \xi(s)\|^p_{H^{-\varepsilon}} \, ds \to 0,$$
which complete the proof.
\end{proof}

\subsection{Proof of weak existence}\label{Proof of main results}

This section is devoted to providing the

\begin{proof}[Proof of Theorem~\ref{thm:weak_solution}]
By the discussion below Proposition \ref{compact2}, especially item (1), the processes $({\xi}_{n}, Z_{n})$ on the new probability space $({\Omega}', {\mathcal{F}}', {\mathbb{P}'})$ satisfy equation \eqref{Hn-equ-2}; more precisely, for any $\phi \in C^\infty(\mathbb{T}^2)$, ${\mathbb{P}'}$-a.s. for all $t \in [0,T]$, it holds
\begin{equation*}
\begin{aligned}
\langle \xi_{n}(t), \phi \rangle &= \langle \xi_n(0), \phi \rangle - \int_{0}^{t} \big\langle \Pi_{n}(u_{n}(s) \cdot \nabla \xi_{n}(s)), \phi \big\rangle \,ds \\
&\quad + \sum_{k \in \Lambda_n} \int_{0}^{t}\! \int_{|z| \leq 1}  \big\langle {\rm e}^{-z \theta_k A_k^n} \xi_n(s-) - \xi_n(s-), \phi  \big\rangle \,\tilde{N}_n^k(dz,ds)\\
&\quad + \sum_{k \in \Lambda_n} \int_{0}^{t}\! \int_{|z| \leq 1}  \big\langle {\rm e}^{-z \theta_k A_k^n} \xi_n(s) - \xi_n(s) + z \theta_k A_k^n (\xi_n(s)), \phi  \big\rangle \,\nu(dz) \,ds,
\end{aligned}
\end{equation*}
where $\tilde{N}_n^k(dz,ds)$ is the Poisson martingale measure corresponding to $Z_{n}^k$, see \eqref{Levy-processes}. Integrating by parts and using the antisymmetry of $A_k^n$, we arrive at
\begin{equation}\label{limit equation-2}
\begin{aligned}
\langle \xi_{n}(t), \phi \big\rangle &= \langle \xi_n(0), \phi \big\rangle + \int_{0}^{t} \big\langle \xi_n(s), u_{n}(s) \cdot\nabla(\Pi_{n}\phi) \big\rangle \,ds \\
&\quad + \sum_{k \in \Lambda_n} \int_{0}^{t}\! \int_{|z| \leq 1} \big\langle \xi_n(s-),{\rm e}^{z \theta_k A_k^n} \Pi_n\phi - \Pi_n\phi  \big\rangle \,\tilde{N}_n^k(dz,ds)\\
&\quad + \sum_{k \in \Lambda_n} \int_{0}^{t}\! \int_{|z| \leq 1} \big\langle \xi_n(s),{\rm e}^{z \theta_k A_k^n} \Pi_n\phi - \Pi_n\phi -z \theta_k A_k^n (\Pi_n\phi) \big\rangle \,\nu(dz) \,ds.
\end{aligned}
\end{equation}
We want to prove that  \eqref{limit equation-2} converges in the space $L^1({\Omega}', L^1([0,T]; \mathbb{R}))$ to the limit equation
\begin{equation*}
    \begin{aligned}
        \langle \xi(t),\phi\rangle &= \langle \xi_0,\phi\rangle+\int_0^t\langle \xi(s), u(s)\cdot\nabla\phi\rangle ds + \sum_{k\in \mathbb{Z}_0^2}\int_0^t\! \int_{|z|\leq1} \big\langle \xi(s-),\phi(\varphi_1^{k,z}) - \phi \big\rangle \tilde{N}_k(dz,ds) \\
        &\quad + \sum_{k\in \mathbb{Z}_0^2}\int_0^t\! \int_{|z|\leq1} \big\langle \xi(s),\phi(\varphi_1^{k,z}) - \phi - z\theta_k\sigma_k\cdot\nabla\phi \big\rangle \,\nu(dz)ds,
    \end{aligned}
\end{equation*}
where the flow map $\varphi^{k,z}_1$ is defined in \eqref{flow}. Note that $\phi\big(\varphi_1^{k,z} \big)={\rm e}^{z \theta_k \sigma_k\cdot \nabla} \phi$ which is an analogue of \eqref{guanxi}.

\textbf{Step 1: Convergence of $\xi_n$.}
 By Proposition~\ref{L1L2}, ${\mathbb{P}'}$-a.s. ${\xi}_{n}$ converges in $L^{1}([0,T]; H^{-\varepsilon}(\mathbb{T}^2))$ to ${\xi}$. Therefore,
\[
\begin{aligned}
\int_{0}^{T} \big|\langle \xi_{n}(s), \phi \rangle - \langle \xi(s), \phi \rangle\big| \,ds &= \int_{0}^{T} \big|\langle \xi_{n}(s) - \xi(s), \phi \rangle\big| \,ds \\
&\leq \int_{0}^{T} \|\xi_{n}(s) - \xi(s) \|_{H^{-\varepsilon}} \|\phi\|_{H^{\varepsilon}} \,ds \rightarrow0.
\end{aligned}
\]
Furthermore, by Proposition \ref{xi-Linf-L2} we have
\[
|\langle {\xi}_{n}(t), \phi \rangle| \leq \|\xi_0\|_{L^2} \|\phi\|_{L^2} \quad {\mathbb{P}'}\text{-a.s. for all } t \in [0,T].
\]
The dominated convergence theorem yields
\[
\lim_{n\to\infty} \mathbb{E}_{\mathbb{P}'} \bigg[\int_0^T \big|\langle \xi_n(s) - \xi(s), \phi\rangle\big| \,ds\bigg] = 0.
\]

\textbf{Step 2: Convergence of the nonlinear term.}
Consider the nonlinear term
\begin{equation*}
\begin{aligned}
J_n&:= \mathbb{E}_{\mathbb{P}'} \bigg[\int_0^T \Big|\int_0^t \langle \xi_n(s), u_n(s) \cdot \nabla (\Pi_n \phi) \rangle \,ds - \int_0^t \langle \xi(s), u(s) \cdot \nabla \phi \rangle \,ds\Big| \,dt \bigg]\\
&\leq \mathbb{E}_{\mathbb{P}'} \bigg[\int_0^T \Big|\int_0^t \big\langle \xi_n(s), u_n(s) \cdot \nabla (\Pi_n \phi) - u(s) \cdot \nabla \phi \big\rangle \,ds\Big| \,dt \bigg] \\
&\quad + \mathbb{E}_{\mathbb{P}'} \bigg[\int_0^T \Big|\int_0^t \langle \xi_n(s) - \xi(s), u(s) \cdot \nabla \phi \rangle \,ds\Big| \,dt \bigg].
\end{aligned}
\end{equation*}
The Cauchy-Schwarz inequality provides
\begin{equation*}
\begin{aligned}
J_n &\leq T\mathbb{E}_{\mathbb{P}'}\bigg[\int_0^T  \|\xi_n(s)\|_{L^2} \|u_n(s) \cdot \nabla (\Pi_n \phi) - u(s) \cdot \nabla \phi\|_{L^2} \,ds\bigg]\\
&\quad + T\mathbb{E}_{\mathbb{P}'}\bigg[\int_0^T \|\xi_n(s) - \xi(s)\|_{H^{-\varepsilon}} \|u(s) \cdot \nabla \phi\|_{H^{\varepsilon}} \,ds \bigg]\\
&\leq T \mathbb{E}_{\mathbb{P}'}\bigg[ \int_{0}^{T} \|\xi_{0}\|_{L^{2}} \big(\|u_{n}(s)\|_{L^{2}} \|\nabla(\Pi_{n}\phi) - \nabla\phi\|_{L^{\infty}} + \|u_{n}(s) - u(s)\|_{L^{2}} \|\nabla\phi\|_{L^{\infty}}\big) \,ds\bigg] \\
&\quad + T \mathbb{E}_{\mathbb{P}'}\bigg[ \int_{0}^{T} \|\xi_{n}(s) - \xi(s)\|_{H^{-\varepsilon}} \|u(s)\|_{H^{1-\varepsilon}} \|\nabla\phi\|_{H^{2\varepsilon}} \,ds\bigg],
\end{aligned}
\end{equation*}
where the last inequality follows from  Lemma \ref{fenkai}. Note that the convergence $u_{n} \to u$ $\mathbb{P}'$-a.s. in $D([0,T]; H^{1-\varepsilon}(\mathbb{T}^2))$ implies $u_{n} \to u$ in $D([0,T]; L^{2}(\mathbb{T}^2))$, hence $u_{n} \to u$ in $L^1([0,T]; L^{2}(\mathbb{T}^2))$. By Sobolev embedding, $\|\nabla(\Pi_{n}\phi) - \nabla\phi\|_{L^{\infty}} \le \|\Pi_{n}\phi - \phi\|_{H^{2+\varepsilon}}$ which vanishes as $n \to \infty$. Combining these facts with the $\P'$-a.s. convergence of $\xi_{n}$ to $\xi$ in $L^1([0,T]; H^{-\varepsilon}(\mathbb{T}^2))$ and the uniform bound $\sup_{s\in[0,T]} \|u(s)\|_{H^{1-\varepsilon}} \le C \sup_{s\in[0,T]} \|\xi(s) \|_{L^2} \le \|\xi_0 \|_{L^2}$, we conclude that $J_n \to 0$ as $n\to \infty$. \smallskip

\textbf{Step 3: Convergence of stochastic integrals.} This is the most technical part of the section. We want to show that the following quantity
\begin{align*}
M_n &:= \mathbb{E}_{\mathbb{P}'}\bigg[\int_{0}^{T}\bigg| \sum_{k \in \Lambda_n} \int_{0}^{t}\! \int_{|z| \leq 1}\big \langle \xi_n(s-),{\rm e}^{z \theta_k A_k^n} \Pi_n\phi - \Pi_n\phi  \big\rangle \,\tilde{N}_n^k(dz,ds) \\
&\hskip60pt - \sum_{k\in \mathbb{Z}_0^2}\int_{0}^{t}\! \int_{|z|\leq 1}\langle\xi(s-),\phi(\varphi_1^{k,z}) - \phi\rangle\,\tilde{N}_{k}(dz,ds)\bigg|\,dt\bigg]
\end{align*}
vanishes as $n\to \infty$. For any fixed $L\in \mathbb{N}$ and $n>L$, we decompose $M_n$ into three parts:
\begin{equation*}
    \begin{aligned}
M_n&\le \mathbb{E}_{\mathbb{P}'}\bigg[\int_{0}^{T}\bigg|\sum_{|k| \le L} \int_{0}^{t}\! \int_{|z|\leq 1}\big\langle\xi_{n}(s - ),{\rm e}^{z \theta_k A_k^n} \Pi_n\phi - \Pi_n\phi\big\rangle\tilde{N}_n^k(dz,ds)\\
&\hskip60pt -\sum_{|k| \le L} \int_{0}^{t}\! \int_{|z|\leq 1}\big\langle\xi(s - ),\phi(\varphi_1^{k,z}) - \phi\big\rangle\tilde{N}_{k}(dz,ds)\bigg|dt\bigg]\\
&\quad +\mathbb{E}_{\mathbb{P}'}\bigg[\int_{0}^{T}\bigg|\sum_{|k| > L} \int_{0}^{t}\! \int_{|z|\leq 1}\big\langle\xi_{n}(s - ),{\rm e}^{z \theta_k A_k^n} \Pi_n\phi - \Pi_n\phi\big\rangle\tilde{N}_n^k(dz,ds)\bigg|dt\bigg]\\
&\quad +\mathbb{E}_{\mathbb{P}'}\bigg[\int_{0}^{T}\bigg|\sum_{|k| > L} \int_{0}^{t}\! \int_{|z|\leq 1}\big\langle\xi(s - ),\phi(\varphi_1^{k,z}) - \phi\big\rangle\tilde{N}_{k}(dz,ds)\bigg|dt\bigg],
    \end{aligned}
\end{equation*}
which are denoted by $M_n^1(L),M_n^2(L) $ and $M_n^3(L)$, respectively.

First, by the Burkholder inequality and the energy estimate \eqref{uniform}, we have
\begin{equation*}\label{Q2-1}
    \begin{aligned}
         M_n^2(L) &\leq CT \mathbb{E}_{\mathbb{P}'}\bigg[\bigg(\sum_{|k| > L} \int_0^T\!\! \int_{|z| \leq 1} \big\langle \xi_{n}(s), {\rm e}^{z \theta_k A_k^n} \Pi_n\phi - \Pi_n\phi \big\rangle^2 \,\nu(dz) \,ds\bigg)^{\frac{1}{2}}\bigg]\\
         & \leq CT \mathbb{E}_{\mathbb{P}'}\bigg[\bigg(\sum_{|k| > L} \int_0^T\!\! \int_{|z| \leq 1} \|\xi_{n}(s)\|_{L^2}^2 \big\| {\rm e}^{z \theta_k A_k^n} \Pi_n\phi - \Pi_n\phi \big\|_{L^2}^2 \,\nu(dz) \,ds\bigg)^{\frac{1}{2}}\bigg]\\
          &\le CT  \mathbb{E}_{\mathbb{P}'} \bigg[ \bigg(\sum_{|k| > L} \int_0^T\!\! \int_{|z| \leq 1} \| \xi_0\|_{L^2}^2\Big\|\theta_k z\int_0^1 {\rm e}^{rz \theta_k A_k^n} (A_k^n \Pi_n \phi) \,dr \Big\|_{L^2}^2   \,\nu(dz)ds \bigg)^{\frac{1}{2}}\bigg],
    \end{aligned}
\end{equation*}
where the last step follows from the fundamental theorem of calculus.
Using the norm-preserving property of the orthogonal operator ${\rm e}^{z \theta_k A_k^n}$ on $H_n$, we obtain
\begin{equation*}
    \begin{aligned}
         M_n^2(L)
          &\le CT^{\frac{3}{2}} \| \xi_0\|_{L^2} \bigg(\sum_{|k| > L}  \int_{|z| \leq 1}\theta_k^2 z^2\|A_k^n \Pi_n \phi \|_{L^2}^2   \,\nu(dz) \bigg)^{\frac{1}{2}} \\
          &\le CT^{\frac{3}{2}}  \| \xi_0\|_{L^2} \bigg(\sum_{|k| > L} \theta_k^2 \|\sigma_k\cdot \nabla(\Pi_n \phi) \|_{L^2}^2 \int_{|z| \leq 1} z^2 \,\nu(dz) \bigg)^{\frac{1}{2}} \\
         & \le CT^{\frac{3}{2}}  \| \xi_0\|_{L^2} \bigg(\sum_{|k| > L} \theta_k^2   \|\sigma_k\|_{L^\infty}^2 \|\nabla\phi\|_{L^2}^2 \int_{|z| \leq 1} z^2\,\nu(dz)\bigg)^{\frac{1}{2}} .
    \end{aligned}
\end{equation*}
Denoting $\|\theta\|_{\ell^2_ L}^2:=\sum_{|k| > L} \theta_k^2$, then
\begin{equation}\label{Mn2L}
    \begin{aligned}
M_n^2(L) & \le CT^{\frac{3}{2}} \|\theta\|_{\ell^2_ L}\| \xi_0\|_{L^2}\|\nabla\phi\|_{L^2}   \bigg( \int_{|z| \leq 1} z^2\,\nu(dz)\bigg)^{\frac{1}{2}}.
   \end{aligned}
\end{equation}

For $M_n^3(L)$, the Burkholder-Davis-Gundy inequality yields
\begin{equation}\label{Mn3L}
\begin{aligned}
 M_n^3(L) &\leq C T \mathbb{E}_{\mathbb{P}'}\bigg[\bigg(\sum_{|k| > L} \int_0^T\!\! \int_{|z| \leq 1} \big\langle \xi(s), \phi(\varphi_1^{k,z}) - \phi \big\rangle^2 \,\nu(dz) \,ds\bigg)^{\frac{1}{2}}\bigg].
\end{aligned}
\end{equation}
By Taylor's formula, there exists some $\zeta^{k,z} = \zeta^{k,z}(x)$ on the line segment joining $x$ and $\varphi_1^{k,z}(x) = x - z\theta_k \sigma_{k}(x)$ such that
\begin{equation}\label{2-taylor}
    \phi\big(\varphi_1^{k,z} \big) -\phi= z\theta_k \sigma_{k} \cdot\nabla\phi+\frac{z^2\theta_k^2}{2}\nabla^2\phi\big(\zeta^{k,z}\big): (\sigma_k\otimes\sigma_k).
\end{equation}
As a consequence,
\begin{equation*}
  \begin{aligned}
  \big\langle \xi(s), \phi\big(\varphi_1^{k,z} \big) -\phi \big\rangle^2
  &\le 2z^2\theta_k^2\langle \xi(s),  \sigma_{k} \cdot\nabla\phi\rangle^2 +\frac{z^4\theta_k^4}{2} \big\langle \xi(s), \nabla^2\phi\big(\zeta^{k,z}\big): (\sigma_k\otimes\sigma_k)\big\rangle^2.
  \end{aligned}
  \end{equation*}
  Substituting this to \eqref{Mn3L}, we denote the two terms on the right-hand side as $M_n^{3,1}(L)$ and $M_n^{3,2}(L)$, respectively. For the first term, we have
  \begin{equation*}
  \begin{aligned}
      M_n^{3,1}(L)
      &\le C T \mathbb{E}_{\mathbb{P}'}\bigg[\bigg(\sum_{|k| > L} \int_0^T\!\! \int_{|z| \leq 1} z^2\theta_k^2  \big\langle \xi(s)\nabla\phi, \sigma_{k} \big\rangle^2 \,\nu(dz) \,ds\bigg)^{\frac{1}{2}}\bigg]\\
      &\le C T \mathbb{E}_{\mathbb{P}'}\bigg[\bigg( \int_0^T\!\! \int_{|z| \leq 1}z^2\|\theta\|_{\ell^\infty_{L}}^2  \sum_{|k| > L}\big\langle \xi(s)\nabla\phi, \sigma_{k} \big\rangle^2 \,\nu(dz) \,ds\bigg)^{\frac{1}{2}}\bigg],
   \end{aligned}
  \end{equation*}
where we denote $\|\theta\|_{\ell^\infty_{L}}=\sup_{|k|>L} |\theta_k|$. Since $\{\sigma_{k}\}_{k\in \mathbb{Z}^2_0}$ is an orthonormal family of divergence free vector fields, it follows that
  $$\sum_{|k| > L}\big\langle \xi(s)\nabla\phi, \sigma_{k} \big\rangle^2 \le\|\xi(s)\nabla\phi\|_{L^2}^2\le \|\xi(s)\|_{L^2}^2 \|\nabla\phi\|_{L^\infty}^2\le\|\xi_0\|_{L^2}^2 \|\nabla\phi\|_{L^\infty}^2.$$
 Substituting this into the above inequality yields
  \begin{equation}\label{M_n1}
  \begin{aligned}
M_n^{3,1}(L)
&\le C T^\frac{3}{2} \|\theta\|_{\ell^\infty_{L}}\|\xi_0\|_{L^2}\|\nabla\phi\|_{L^\infty}\bigg( \int_{|z| \leq 1} z^2\,\nu(dz)\bigg)^\frac{1}{2}.
 \end{aligned}
  \end{equation}
 For the second term, we have
\begin{equation*}
  \begin{aligned}
M_n^{3,2}(L) &\le C T \mathbb{E}_{\mathbb{P}'}\bigg[\bigg(\sum_{|k| > L} \int_0^T\!\! \int_{|z| \leq 1} z^4\theta_k^4\big\langle \xi(s), \nabla^2\phi\big(\zeta^{k,z}\big): (\sigma_k\otimes\sigma_k)\big\rangle^2 \,\nu(dz) \,ds\bigg)^{\frac{1}{2}}\bigg]\\
&= C T \mathbb{E}_{\mathbb{P}'}\bigg[\bigg( \int_0^T\!\! \int_{|z| \leq 1} z^4 \sum_{|k| > L} \theta_k^4  \big\langle \xi(s), \nabla^2\phi\big(\zeta^{k,z}\big) : (\sigma_k\otimes\sigma_k)\big\rangle^2 \,\nu(dz) \,ds\bigg)^{\frac{1}{2}}\bigg].
 \end{aligned}
  \end{equation*}
Proceeding further,
 \begin{equation*}
  \begin{aligned}
M_n^{3,2}(L) &\le C T^\frac{3}{2} \bigg(  \sum_{|k| > L} \theta_k^4 \|\xi_0\|_{L^2}^2 \|\nabla^2\phi \|_{L^\infty}^2 \| \sigma_k\otimes\sigma_k\|_{L^2}^2 \int_{|z| \leq 1} z^2\,\nu(dz) \bigg)^{\frac{1}{2}}\\
 &\le C T^\frac{3}{2} \|\xi_0\|_{L^2} \|\nabla^2\phi \|_{L^\infty} \bigg(  \|\theta\|_{\ell^\infty_{L}}^2 \bigg(\sum_{|k| > L}  \theta_k^2 \bigg) \int_{|z| \leq 1} z^2\,\nu(dz) \bigg)^{\frac{1}{2}}\\
& \le C T^\frac{3}{2} \|\theta\|_{\ell^\infty_{L}} \|\xi_0\|_{L^2} \|\nabla^2\phi \|_{L^\infty} \bigg( \int_{|z| \leq 1} z^2\,\nu(dz)\bigg)^{\frac{1}{2}},
  \end{aligned}
  \end{equation*}
where we have used $\sum_{|k| > L}  \theta_k^2\le \|\theta \|_{\ell^2}^2 =1$.
Combining the above estimate with \eqref{M_n1} leads to
 \begin{equation}\label{Mn3L-1}
M_n^{3}(L) \le C T^\frac{3}{2} \|\theta\|_{\ell^\infty_{L}} \| \phi \|_{C^2} \|\xi_0\|_{L^2} \bigg( \int_{|z| \leq 1} z^2\,\nu(dz)\bigg)^{\frac{1}{2}}.
  \end{equation}

Now we treat $M_n^1(L)$ which can be estimated as
\begin{equation*}
    \begin{aligned}
M_n^1(L) &\le\sum_{|k| \le L} \mathbb{E}_{\mathbb{P}'} \int_{0}^{T} \bigg| \int_{0}^{t}\! \int_{|z|\leq 1}\big\langle\xi_{n}(s - ),{\rm e}^{z \theta_k A_k^n} \Pi_n\phi - \Pi_n\phi\big\rangle\tilde{N}_n^k(dz,ds)\\
&\hskip80pt - \int_{0}^{t}\! \int_{|z|\leq 1}\big\langle\xi(s - ),\phi(\varphi_1^{k,z}) - \phi\big\rangle\tilde{N}_{k}(dz,ds)\bigg|dt .
    \end{aligned}
\end{equation*}
As this is a sum of finitely many terms, it suffices to show the convergence of each one, denoted as $M_n^1(k)$ for every $|k|\le L$. We have, by triangle inequality,
\begin{equation}\label{Mn1k}
    \begin{aligned}
M_n^1(k) &\le \mathbb{E}_{\mathbb{P}'}\! \int_{0}^{T}\! \bigg| \int_{0}^{t}\! \int_{|z|\leq 1}\! \Big( \big\langle\xi_{n}(s - ),{\rm e}^{z \theta_k A_k^n} \Pi_n\phi - \Pi_n\phi\big\rangle \\
&\hskip100pt - \big\langle\xi(s - ),\phi(\varphi_1^{k,z}) - \phi\big\rangle \Big) \tilde{N}_n^k(dz,ds)\bigg|dt \\
&\quad + \mathbb{E}_{\mathbb{P}'}\! \int_{0}^{T}\! \bigg| \int_{0}^{t}\! \int_{|z|\leq 1} \big\langle\xi(s - ), \phi(\varphi_1^{k,z}) - \phi\big\rangle\tilde{N}^n_{k}(dz,ds) \\
&\hskip60pt - \int_{0}^{t}\! \int_{|z|\leq 1} \big\langle\xi(s - ),\phi(\varphi_1^{k,z}) - \phi\big\rangle \tilde{N}_{k}(dz,ds) \bigg|dt .
    \end{aligned}
\end{equation}
We denote the two expectations by $M_n^{11}(k)$ and $M_n^{12}(k)$, respectively.

First, by the Burkholder inequality and Jensen inequality,
\begin{equation*}
    \begin{aligned}
M_n^{11}(k) &\le T \bigg[\mathbb{E}_{\mathbb{P}'}\! \int_{0}^{T}\!\! \int_{|z|\leq 1}\! \Big( \big\langle\xi_{n}(s),{\rm e}^{z \theta_k A_k^n} \Pi_n\phi - \Pi_n\phi\big\rangle - \big\langle\xi(s),\phi(\varphi_1^{k,z}) - \phi\big\rangle \Big)^2 \nu(dz)ds \bigg]^{\frac12}\\
&\le 2T \bigg[\mathbb{E}_{\mathbb{P}'} \! \int_{0}^{T}\!\! \int_{|z|\leq 1} \Big\langle\xi_{n}(s),{\rm e}^{z \theta_k A_k^n} \Pi_n\phi - \Pi_n\phi- \big(\phi(\varphi_1^{k,z}) - \phi \big) \Big\rangle^2 \nu(dz)ds \bigg]^{\frac12} \\
&\quad + 2T \bigg[\mathbb{E}_{\mathbb{P}'}\! \int_{0}^{T}\!\! \int_{|z|\leq 1} \big\langle\xi_{n}(s) -\xi(s), \phi(\varphi_1^{k,z}) - \phi\big\rangle^2 \nu(dz)ds \bigg]^{\frac12} \\
&=: M_n^{111}(k) + M_n^{112}(k).
    \end{aligned}
\end{equation*}
Similarly to \eqref{guanxi}, one has $\phi (\varphi_1^{k,z} )={\rm e}^{z \theta_k \sigma_k\cdot \nabla} \phi$; for any $k\in \mathbb{Z}_0^2$ and $ |z|\le 1$, it is shown in Appendix \ref{third} that $\|{\rm e}^{z \theta_k A_k^n} \Pi_n\phi - {\rm e}^{z \theta_k \sigma\cdot\nabla} \phi\|_{L^2}\rightarrow0$. Combining this fact with $\|\Pi_n\phi - \phi\|_{L^2}\rightarrow0$ and the $\P'$-a.s. uniform bound $\|\xi_n(s) \|_{L^2}\le \|\xi_0 \|_{L^2}$, we have, for any $s\in [0,T]$, $|z|\le 1$ and $k\in \mathbb{Z}_0^2$,
  $$\lim_{n\to \infty} \big\langle\xi_{n}(s),{\rm e}^{z \theta_k A_k^n} \Pi_n\phi - \Pi_n\phi- \big(\phi(\varphi_1^{k,z}) - \phi \big) \big\rangle =0. $$
On the other hand, similarly to \eqref{R_n-taylor}, we can show that
  $$\big\| {\rm e}^{z \theta_k A_k^n} \Pi_n\phi - \Pi_n\phi \big\|_{L^2}^2 \le C z^2 \theta_k^2 \|\nabla\Pi_n\phi\|_{L^\infty}^2 \le C_\phi\, z^2. $$
In the same way, the mean value theorem (cf. \eqref{Taylor-1} or \eqref{2-taylor}) gives us $\|\phi(\varphi_1^{k,z}) - \phi \|_{L^2}^2 \le C_\phi\, z^2$. Therefore, $\P'$-a.s. for all $s\in [0,T]$, $k\in \Z^2_0$, it holds
  $$\big\langle\xi_{n}(s),{\rm e}^{z \theta_k A_k^n} \Pi_n\phi - \Pi_n\phi- \big(\phi(\varphi_1^{k,z}) - \phi \big) \big\rangle^2 \le C_\phi \|\xi_0\|_{L^2}^2\, z^2. $$
Since $\int_{|z|\le 1} z^2\, \nu(dz)<\infty$, we deduce from the dominated convergence theorem that
  \begin{equation}\label{Mn111k}
  \lim_{n\to \infty} M_n^{111}(k)=0.
  \end{equation}

Concerning $M_n^{112}(k)$, recall that $\P'$-a.s. $\xi_n\to \xi$ in $D([0,T], H^{-\varepsilon}(\mathbb{T}^2))$; combined with the $\P'$-a.s. uniform boundedness of $\{\xi_n \}_n$ in $L^\infty([0,T]; L^2(\mathbb{T}^2))$, we deduce that for a.e. $s\in [0,T]$, $\xi_n(s)$ converges weakly in $L^2(\T^2)$ to $\xi(s)$. Obviously, one has $\phi(\varphi_1^{k,z}) - \phi \in L^2(\T^2)$, hence, for a.e. $s\in [0,T]$ and $|z|\le 1$,
  $$\lim_{n\to \infty} \big\langle\xi_{n}(s) -\xi(s), \phi(\varphi_1^{k,z}) - \phi\big\rangle=0. $$
Similarly as above, we also have
  $$\big\langle\xi_{n}(s) -\xi(s), \phi(\varphi_1^{k,z}) - \phi\big\rangle^2 \le C_\phi \|\xi_0\|_{L^2}^2\, z^2, $$
thus the dominated convergence theorem implies $\lim_{n\to \infty} M_n^{112}(k)=0$. Combining this result with \eqref{Mn111k}, we arrive at
  \begin{equation}\label{Mn11k}
  \lim_{n\to \infty} M_n^{11}(k)=0.
  \end{equation}

Next, we show the convergence of
\begin{equation*}
    \begin{aligned}
M_n^{12}(k) &= \mathbb{E}_{\mathbb{P}'}\! \int_{0}^{T}\! \bigg| \int_{0}^{t}\! \int_{|z|\leq 1} \big\langle\xi(s - ), \phi(\varphi_1^{k,z}) - \phi\big\rangle\tilde{N}^n_{k}(dz,ds) \\
&\hskip50pt - \int_{0}^{t}\! \int_{|z|\leq 1} \big\langle\xi(s - ),\phi(\varphi_1^{k,z}) - \phi\big\rangle \tilde{N}_{k}(dz,ds) \bigg|dt,
    \end{aligned}
\end{equation*}
for which we follow the idea of \cite[Theorem 175]{Situ05}. Fix a small $\delta>0$, we have
\begin{equation}\label{Mn12k}
    \begin{aligned}
M_n^{12}(k) &\le \mathbb{E}_{\mathbb{P}'}\! \int_{0}^{T}\! \bigg| \int_{0}^{t}\! \int_{|z|\leq \delta} \big\langle\xi(s - ), \phi(\varphi_1^{k,z}) - \phi\big\rangle\tilde{N}^n_{k}(dz,ds) \bigg|dt  \\
&\quad + \mathbb{E}_{\mathbb{P}'}\! \int_{0}^{T}\! \bigg| \int_{0}^{t}\! \int_{|z|\leq \delta} \big\langle\xi(s - ),\phi(\varphi_1^{k,z}) - \phi\big\rangle \tilde{N}_{k}(dz,ds) \bigg|dt \\
&\quad + \mathbb{E}_{\mathbb{P}'}\! \int_{0}^{T}\! \bigg| \int_{0}^{t}\! \int_{\delta< |z|\leq 1} \big\langle\xi(s - ), \phi(\varphi_1^{k,z}) - \phi\big\rangle\tilde{N}^n_{k}(dz,ds) \\
&\hskip60pt - \int_{0}^{t}\! \int_{\delta< |z|\leq 1} \big\langle\xi(s - ),\phi(\varphi_1^{k,z}) - \phi\big\rangle \tilde{N}_{k}(dz,ds) \bigg|dt ,
    \end{aligned}
\end{equation}
which are denoted as $M_n^{121}(k)$, $M_n^{122}(k)$ and $M_n^{123}(k)$, respectively. For the first term, the Burkholder inequality and Jensen inequality yield
  $$\aligned
  M_n^{121}(k) &\le T \bigg[\mathbb{E}_{\mathbb{P}'}\! \int_{0}^{T}\!\! \int_{|z|\leq \delta} \big\langle\xi(s), \phi(\varphi_1^{k,z}) - \phi\big\rangle^2 \nu(dz) ds \bigg]^{\frac12} \\
  &\le T \bigg[\mathbb{E}_{\mathbb{P}'}\! \int_{0}^{T}\!\! \int_{|z|\leq \delta} \|\xi(s) \|_{L^2}^2 \|\phi(\varphi_1^{k,z}) - \phi \|_{L^2}^2\, \nu(dz) ds \bigg]^{\frac12} \\
  &\le T^{\frac32} \|\xi_0 \|_{L^2} C_\phi \bigg[\int_{|z|\leq \delta}z^2\, \nu(dz) \bigg]^{\frac12}.
  \endaligned $$
In the same way, we have
  $$M_n^{122}(k) \le T^{\frac32} \|\xi_0 \|_{L^2} C_\phi \bigg[\int_{|z|\leq \delta}z^2\, \nu(dz) \bigg]^{\frac12}. $$
Concerning the term $M_n^{123}(k)$, we employ \cite[Lemma 400]{Situ05} (see also \cite[page 65, Lemma 4]{Skorohod1965}) which is stated for the specific case where $\nu(dz) = \frac{1}{|z|^2}  dz$; however, one can check that the proof of \cite[page 65, Lemma 4]{Skorohod1965} works for general L\'evy measures $\nu$ satisfying $\int_{|z|\leq 1} z^2  \nu(dz) < \infty$. Recalling the discussions below Proposition \ref{compact2}, in particular, for any $k\in \Z^2_0$, the L\'evy process $Z^n_k$ converges $\P'$-a.s. in $D([0,T],\R)$ to $Z_k$, thus by assertion 1) of \cite[Lemma 400]{Situ05}, it holds
  $$\lim_{n\to \infty} \int_{0}^{t}\!\! \int_{\delta< |z|\leq 1} \big\langle\xi(s - ), \phi(\varphi_1^{k,z}) - \phi\big\rangle N^n_{k}(dz,ds) = \int_{0}^{t}\!\! \int_{\delta< |z|\leq 1} \big\langle\xi(s - ),\phi(\varphi_1^{k,z}) - \phi\big\rangle N_{k}(dz,ds), $$
where $N^n_{k}(dz,ds)$ and $ N_{k}(dz,ds)$ are the Poisson counting measures associated to $Z^n_k$ and $Z_k$, respectively. This immediately implies that $\lim_{n\to\infty} M_n^{123}(k)=0$ since the processes $Z^n_k$ and $Z_k$ have the same L\'evy measure $\nu$. Combining the limit with the estimates on $M_n^{121}(k)$ and $M_n^{122}(k)$, we first let $n\to \infty$ and then $\delta\to 0$ in \eqref{Mn12k}, and obtain
  $$\lim_{n\to\infty} M_n^{12}(k) =0.$$
The above limit together with \eqref{Mn1k} and \eqref{Mn11k} yields
  $$\lim_{n\to\infty} M_n^1(L) =0 $$
for any fixed $L>0$. Combined with the estimates \eqref{Mn2L} and \eqref{Mn3L-1}, first letting $n\to\infty$ and then $L\to \infty$, we conclude from the above limit that the martingale term $M_n$ vanishes as $n\to\infty$.

\textbf{Step 4: Convergence of the L\'evy measure term.}
Define
\begin{align*}
V_n &= \mathbb{E}_{\mathbb{P}'}\bigg[\int_{0}^{T} \bigg|\sum_{k\in \Lambda_n} \int_{0}^{t}\! \int_{|z| \leq 1} \big\langle\xi_n(s),{\rm e}^{z \theta_k A_k^n} \Pi_n\phi - \Pi_n\phi -z \theta_k A_k^n (\Pi_n\phi)\big\rangle \,\nu(dz) \,ds \\
&\hskip60pt - \sum_{k\in\mathbb{Z}_0^2} \int_{0}^{t}\! \int_{|z| \leq 1} \big\langle \xi(s), \phi\big(\varphi_1^{k,z} \big) - \phi -z\theta_{k}\sigma_k \cdot \nabla \phi \big\rangle \,\nu(dz) \,ds\bigg| \,dt\bigg].
\end{align*}
Fix $L>0$, for any $n>L$, we estimate $V_n$ as follows:
\begin{equation*}
\begin{aligned}
V_n &\le T\mathbb{E}_{\mathbb{P}'}\bigg[\sum_{|k| \le L} \int_{0}^{T}\!\! \int_{|z| \leq 1} \Big|\big\langle\xi_n(s),{\rm e}^{z \theta_k A_k^n} \Pi_n\phi - \Pi_n\phi -z \theta_k A_k^n (\Pi_n\phi) \big\rangle\\
&\hskip110pt -\big\langle \xi(s),  \phi\big(\varphi_1^{k,z} \big) - \phi - z\theta_{k} \sigma_k \cdot \nabla \phi \big\rangle \Big| \,\nu(dz) ds\bigg] \\
&\quad + T\mathbb{E}_{\mathbb{P}'}\bigg[\sum_{|k| > L} \int_{0}^{T}\!\! \int_{|z| \leq 1} \big|\big\langle\xi_n(s),{\rm e}^{z \theta_k A_k^n} \Pi_n\phi - \Pi_n\phi -z \theta_k A_k^n (\Pi_n\phi) \big\rangle \big| \,\nu(dz) ds\bigg]\\
&\quad + T\mathbb{E}_{\mathbb{P}'}\bigg[ \sum_{|k|>L} \int_{0}^{T}\!\! \int_{|z| \leq 1}  \big|\big\langle \xi(s), \phi\big(\varphi_1^{k,z} \big) - \phi - z\theta_{k} \sigma_k \cdot \nabla \phi \big\rangle\big| \,\nu(dz) ds\bigg],
\end{aligned}
\end{equation*}
which are denoted as $V_n^1(L) $, $V_n^2(L)$ and $V_n^3(L)$,  respectively.

For the third term $V_n^3(L) $, we give a uniform estimate using \eqref{2-taylor}:
\begin{equation}\label{V_n^3(M)}
\begin{aligned}
V_n^3(L)
&\leq T \mathbb{E}_{\mathbb{P}'} \bigg(\sum_{|k|>L}  \int_0^T\!\! \int_{|z| \leq 1} \| \xi(s) \|_{L^2} \bigg\| \frac{z^2\theta_k^2}{2} \nabla^2\phi(\zeta^{k,z}) : (\sigma_{k} \otimes \sigma_{k}) \bigg\|_{L^2} \,\nu(dz) ds \bigg) \\
&\leq CT^2  \| \xi_0 \|_{L^2} \| \nabla^2 \phi \|_{L^\infty} \| \theta \|_{\ell_L^2}  \int_{|z| \leq 1} z^2 \,\nu(dz),
\end{aligned}
\end{equation}
where we denote $ \| \theta \|_{\ell_{L}^{2}}:=\big(\sum_{|k|>L}\theta_k^2 \big)^{\frac{1}{2}}$.

For $V_n^2(L)$, define $R^\phi_n:={\rm e}^{z \theta_k A_k^n} \Pi_n\phi - \Pi_n\phi -z \theta_k A_k^n (\Pi_n\phi)$, then similarly to \eqref{R_n^l}, it holds
  $$ R^\phi_n =z^2\theta_k^2 \int_0^1 (1-s){\rm e}^{sz \theta_k A_k^n}(A_k^n)^2 (\Pi_n\phi)\, ds.$$
Recall that ${\rm e}^{-sz \theta_k A_k^n}$ is an isometry on $H_n$, we have
\begin{equation*}
    \begin{aligned}
        \|R^\phi_n \|_{L^2}&\le  z^2\theta_k^2 \|(A_k^n)^2 (\Pi_n\phi)\|_{L^2}\le  z^2\theta_k^2 \| \sigma_k\cdot\nabla (A_k^n(\Pi_n\phi) ) \|_{L^2} \\
        &= z^2\theta_k^2 \| \sigma_k\cdot\nabla \Pi_n ( \sigma_k\cdot\nabla (\Pi_n\phi) ) \|_{L^2} .
    \end{aligned}
\end{equation*}
Similarly to the proof of Lemma \ref{lem-double-derivative}, one can show that $\| \sigma_k\cdot\nabla \Pi_n ( \sigma_k\cdot\nabla (\Pi_n\phi) ) \|_{L^2}\le C\|\nabla^2 \phi\|_{L^\infty}$ for some constant $C$ independent of $k$, therefore,
\begin{equation}\label{R-n-prime}
    \begin{aligned}
        \|R^\phi_n \|_{L^2} & \le Cz^2\theta_k^2 \|\nabla^2\phi\|_{L^\infty}.
    \end{aligned}
\end{equation}
As a result, we obtain
\begin{equation}\label{V_n^2(M)}
    \begin{aligned}
       V_n^2(L)  &\leq  T\mathbb{E}_{\mathbb{P}'}\bigg[\sum_{|k| > L} \int_{0}^{T}\!\! \int_{|z| \leq 1}  \big|\big\langle\xi_n(s), R^\phi_n \big\rangle \big| \,\nu(dz) ds\bigg]\\
       &\le T\sum_{|k| > L} \int_{0}^{T}\!\! \int_{|z| \leq 1} \|\xi_0\|_{L^2} C z^2 \theta_k^2\|\nabla^2\phi\|_{L^\infty} \,\nu(dz) ds\\
       &\le CT^2 \|\theta\|_{\ell^2_L} \|\xi_0\|_{L^2}   \|\nabla^2\phi\|_{L^\infty}\int_{|z| \leq 1} z^2 \,\nu(dz).
    \end{aligned}
\end{equation}

Finally, we treat the term $V_n^1(L)$. Recalling that $\phi\big(\varphi_1^{k,z} \big)={\rm e}^{z \theta_k \sigma_k\cdot \nabla} \phi$ which is an analogue of \eqref{guanxi}, we can decompose $V_n^1(L)$ as follows:
\begin{equation*}
\begin{aligned}
    V_n^1(L)  &\leq T \mathbb{E}_{\mathbb{P}'}\bigg[ \sum_{|k| \le L} \int_{0}^{T}\!\! \int_{|z| \leq 1}  \big|\big\langle\xi_n(s)-\xi(s), \phi\big(\varphi_1^{k,z} \big) - \phi - z \theta_k \sigma_k \cdot \nabla \phi  \big\rangle \big| \,\nu(dz) ds\bigg]\\
&\quad +T \mathbb{E}_{\mathbb{P}'}\bigg[ \sum_{|k| \le L} \int_{0}^{T}\!\! \int_{|z| \leq 1} \big| \big\langle \xi_n(s), U^{k,z}_n \big\rangle \big| \,\nu(dz) ds \bigg] \\
&=: V_n^{11}(L)+V_n^{12}(L),
\end{aligned}
\end{equation*}
where we denote
  $$U_n^{k,z}:={\rm e}^{z \theta_k A_k^n} \Pi_n\phi - \Pi_n\phi -z \theta_k A_k^n (\Pi_n\phi)- \big({\rm e}^{z \theta_k \sigma_k\cdot \nabla} \phi- \phi - z\theta_{k} \sigma_k \cdot \nabla \phi \big).$$
Since $\P^\prime$-a.s. $\| \xi_n(s) \|_{L^2}\le \| \xi_0\|_{L^2}$ for all $s\in[0,T]$, we have
\begin{equation*}
    \begin{aligned}
        V_n^{12}(L)&\le  T^2 \| \xi_0\|_{L^2} \sum_{|k| \le L} \int_{|z| \leq 1}  \big\| U_n^{k,z}\big\|_{L^2} \,\nu(dz).
    \end{aligned}
\end{equation*}
By the definition of $A_k^n$ and triangle inequality,
\begin{equation*}
    \begin{aligned}
      &\big\|  A_k^n (\Pi_n\phi )
-   \sigma_k \cdot \nabla \phi \big\|_{L^2} \\
& \le\big\|  \Pi_n\big(\sigma_k\cdot \nabla(\Pi_n \phi)\big)
-   \Pi_n (\sigma_k \cdot \nabla \phi) \big\|_{L^2}+ \| \Pi_n(\sigma_k \cdot \nabla \phi) - \sigma_k \cdot \nabla \phi \|_{L^2} \\
&\le \|\sigma_k\|_{L^\infty} \| \nabla(\Pi_n \phi)- \nabla \phi\|_{L^2}+ \| \Pi_n(\sigma_k \cdot \nabla \phi)
- \sigma_k \cdot \nabla \phi \|_{L^2}
\rightarrow 0
    \end{aligned}
\end{equation*}
as $n\to \infty$; combined with Lemma \ref{C.1}, we obtain $\lim_{n\to\infty} \| U_n^{k,z}\|_{L^2}=0$ for any $|k|\le L$ and $|z|\le 1 $. Moreover, using the fact ${\rm e}^{z \theta_k \sigma_k\cdot \nabla} \phi= \phi\big(\varphi_1^{k,z} \big)$, and the estimates \eqref{R-n-prime} and \eqref{2-taylor}, we have $\| U_n^{k,z}\|_{L^2}\le C_\phi\, z^2$ for some constant $C_\phi$ independent of $k$ and $z$. As $\int_{|z|\le 1} z^2\,\nu(dz)<\infty$, the dominated convergence theorem implies
  \begin{equation}\label{V12}
  \lim_{n\to\infty} V_n^{12}(L)=0.
  \end{equation}

The treatment of $V_n^{11}(L)$ is similar to that of $M_n^{112}(k)$. We have $\P'$-a.s. $\xi_n$ converges to $\xi$ in $D([0,T]; H^{-\varepsilon}(\mathbb{T}^2))$ for any $\varepsilon>0$, hence for a.e. $s\in [0,T]$, $\xi_n(s)$ converges in $ H^{-\varepsilon}(\mathbb{T}^2)$ to $\xi(s)$. Combining this fact with the uniform boundedness of $\{\xi_n(s) \}_n$ in $L^2(\T^2)$, we deduce that, for a.e. $s\in [0,T]$, $\xi_n(s)$ converges weakly in $ L^2$ to $\xi(s)$.
Obviously, for all $|k|\le L$ and $|z|\le 1$, $\phi\big(\varphi_1^{k,z} \big) - \phi - z \theta_k \sigma_k \cdot \nabla \phi$ belongs to $L^2(\T^2)$, thus
  $$\lim_{n\to \infty} \big\langle\xi_n(s)-\xi(s), \phi\big(\varphi_1^{k,z} \big) - \phi - z \theta_k \sigma_k \cdot \nabla \phi  \big\rangle =0. $$
Again by Cauchy's inequality and \eqref{2-taylor}, we have
  $$\big|\big\langle\xi_n(s)-\xi(s), \phi\big(\varphi_1^{k,z} \big) - \phi - z \theta_k \sigma_k \cdot \nabla \phi  \big\rangle \big| \le 2 \|\xi_0 \|_{L^2} C_\phi z^2, $$
hence the dominated convergence theorem yields $\lim_{n\to\infty} V_n^{11}(L)=0$. Combining this limit with \eqref{V12} we finally conclude that
  \begin{equation}\label{Vn1L}
  \lim_{n\to\infty} V_n^1(L) =0.
  \end{equation}

Recalling the estimates \eqref{V_n^3(M)} and \eqref{V_n^2(M)}, we get that for any fixed $L>0$,
  $$ V_n \le C\|\theta\|_{\ell_L^2}+ V_n^{1}(L) . $$
Combining the definition of $\|\theta \|_{\ell_L^2}$ with \eqref{Vn1L}, we first let $n \to \infty$, then $L \to \infty$, and obtain
  $$ \lim_{n\to\infty} V_n=0. $$

\textbf{Step 6: Passage to the limit.}
Recalling the definition that $\xi_n(0)=\Pi_n(\xi_0)$, it is clear that $\xi_n(0)\rightarrow\xi_0 $ as $n\rightarrow\infty$ in $L^2(\mathbb{T}^2)$, thus we have $\langle\xi_n(0),\phi\rangle\rightarrow\langle\xi_{0},\phi\rangle$. Combining all the convergence results above and letting $n \to \infty$ in \eqref{limit equation-2}, we obtain $\mathbb{P}'$-a.s., for a.e. $t\in[0,T]$,
\begin{equation*}
\begin{aligned}
\langle \xi(t), \phi \rangle &= \langle \xi_0, \phi \rangle + \int_0^t \langle \xi(s), u(s)\cdot \nabla \phi \rangle \,ds \\
&\quad + \sum_{k\in \mathbb{Z}_0^2} \int_0^t \int_{|z| \leq 1} \langle \xi(s-), \phi(\varphi_1^{k,z}) - \phi \rangle \,\tilde{N}_k(dz,ds) \\
&\quad +\sum_{k\in \mathbb{Z}_0^2} \int_0^t \int_{|z| \leq 1} \langle \xi(s), \phi(\varphi_1^{k,z}) - \phi - z\theta_k\sigma_k \cdot \nabla\phi \rangle \,\nu(dz) \,ds,
\end{aligned}
\end{equation*}
which completes the proof.
\end{proof}

\section{Scaling limit of stochastic 2D Euler equations with L\'evy transport noises}

Recall that we consider a sequence of stochastic 2D Euler equations \eqref{yilie2D}, in which the coefficients $\{\theta^n \}_n\subset \ell^2(\Z^2_0)$ satisfy Hypotheses \ref{hypo-coefficients}. According to Theorem \ref{thm:weak_solution}, for every $n\ge 1$, equation \eqref{yilie2D} has a weak solution $\big(\xi^n, \{Z^n_k\}_k \big)$, where $\{Z^n_k\}_k$ is a family of independent L\'evy processes with the same L\'evy measure $\nu$; moreover, $\P$-a.s. $\|\xi^n(t)\|_{L^2}\le \|\xi_0 \|_{L^2}$ for all $t\in [0,T]$ and $\xi^n$ solves \eqref{yilie2D} in the sense of Definition \ref{def-weak-solution}, where the family $\big\{ \tilde N^n_k \big\}_k$ consists of Poisson martingales measures corresponding to the L\'evy processes $\{Z^n_k\}_k$ . These weak solutions might be defined on different probability spaces, but for simplicity of notation we will not distinguish the notations $\Omega, \P$ and $\E$.

Following the methodology of Section~\ref{com-tight}, we can derive in a similar way tightness results for the sequence of solutions $\big\{ \big(\xi^n, \{Z^n_k\}_k \big) \big\}_n$. The arguments are summarized as follows. First, we prove that $\{\xi^n \}_n$ verifies the Aldous condition.

\begin{proposition}\label{prop-aldous-condition}
     The sequence $\{\xi^n\}_n$ satisfies the Aldous condition in $H^{-\beta}(\mathbb{T}^2)$ for $\beta>3$.
\end{proposition}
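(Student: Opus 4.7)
The plan is to adapt the proof of Proposition \ref{aldous-condition} from the Galerkin approximations to the infinite-dimensional weak solutions $\xi^n$. By Lemma \ref{adlous condition1}, it suffices to find constants $\alpha, \beta' > 0$ and $C > 0$ such that
\[
\mathbb{E}\bigl[\|\xi^n(\tau_n + \theta) - \xi^n(\tau_n)\|^\alpha_{H^{-\beta}}\bigr] \le C\theta^{\beta'}
\]
uniformly over $n$ and over stopping times $\tau_n$ with $\tau_n + \theta \le T$. To compute the $H^{-\beta}$-norm, I express it through the Fourier basis as $\|f\|^2_{H^{-\beta}} = \sum_{l \in \mathbb{Z}^2_0} |l|^{-2\beta} |\langle f, e_l\rangle|^2$ and use the weak formulation of Definition \ref{def-weak-solution} with $\phi = e_l$, integrated from $\tau_n$ to $\tau_n + \theta$, to decompose $\langle \xi^n(\tau_n+\theta) - \xi^n(\tau_n), e_l\rangle$ into a nonlinear transport piece, a compensated jump martingale, and a L\'evy compensator term. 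Each piece will be estimated separately, in parallel with the three steps of Proposition \ref{aldous-condition}.

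For the nonlinear transport term, the argument is essentially unchanged from \eqref{adlousnonlinear}: rewriting $u^n \cdot \nabla \xi^n = \nabla \cdot(u^n\xi^n)$, applying Lemma \ref{fenkai} with $s_1 = 1/2$, $s_2 = 0$, and combining it with the Biot--Savart bound $\|u^n\|_{H^{1/2}} \le C\|\xi^n\|_{H^{-1/2}}$ together with the uniform $L^2$-bound \eqref{estimate-2}, we obtain control of order $\theta\|\xi_0\|_{L^2}^2$, which is independent of $l$ and therefore summable against $|l|^{-2\beta}$ for $\beta > 3$. For the L\'evy compensator, the Taylor expansion \eqref{2-taylor} applied with $\phi = e_l$ yields
\[
\bigl\|e_l(\varphi_1^{k,z}) - e_l + z\theta^n_k\sigma_k\cdot\nabla e_l\bigr\|_{L^2} \le C z^2 (\theta^n_k)^2 |l|^2,
\]
so after pairing with $\xi^n$, using $\|\xi^n\|_{L^2}\le\|\xi_0\|_{L^2}$, summing $\sum_k (\theta^n_k)^2 \le 1$, integrating against $\nu(dz)$ (with $\int_{|z|\le 1} z^2 \nu(dz) < \infty$), and finally summing the weight $|l|^{-2\beta}\cdot|l|^{4}$ (which converges for $\beta > 3$), we recover an estimate analogous to \eqref{Levy estimate}. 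For the compensated martingale, the Burkholder--Davis--Gundy inequality reduces the analysis to bounding $\langle \xi^n(s), e_l(\varphi_1^{k,z}) - e_l\rangle^2$; the mean-value identity \eqref{Taylor-1} gives $\|e_l(\varphi_1^{k,z}) - e_l\|_{L^2} \le C|z|\theta^n_k|l|$, and the resulting sum $\sum_l |l|^{-2\beta}\cdot|l|^2$ again converges for $\beta > 3$, mirroring \eqref{martingale}.

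The main obstacle is that, unlike in the Galerkin setting where the antisymmetric matrices $A_k^n$ give rise to norm-preserving exponentials, here we must work directly with the measure-preserving flows $\varphi_1^{k,z}$ via Taylor expansions of $e_l\circ \varphi_1^{k,z}$, tracking the $|l|$-dependence so that the weighted sum $\sum_l |l|^{-2\beta}|\langle \cdot, e_l\rangle|^2$ remains convergent after the extra factors $|l|^2$ (first order, for the martingale term) or $|l|^4$ (second order, for the compensator) are absorbed. This is precisely why the threshold $\beta > 3$ appears. Combining the three estimates verifies the hypothesis of Lemma \ref{adlous condition1} with suitable $\alpha, \beta' > 0$, which yields the Aldous condition in $H^{-\beta}(\mathbb{T}^2)$.
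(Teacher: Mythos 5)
Your proposal is correct and follows essentially the same route as the paper: test against $e_l$ in the weak formulation, split into drift, compensated-jump martingale, and L\'evy compensator, use the mean-value identity \eqref{Taylor-1} and the second-order Taylor expansion \eqref{2-taylor} for the flows $\varphi^n_{k,z}$ to extract factors $|l|$ and $|l|^2$ respectively, and sum against $|l|^{-2\beta}$ with $\beta>3$ before invoking Lemma \ref{adlous condition1}. The only (immaterial) deviations are a sign slip in the compensator ($-z\theta^n_k\sigma_k\cdot\nabla e_l$, not $+$) and your reuse of the Lemma \ref{fenkai} bound \eqref{adlousnonlinear} for the nonlinear term, where the paper simply uses $\|\nabla e_l\|_{L^\infty}\lesssim |l|$; both work.
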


\begin{proof}
Fix any $n\in \mathbb N$ and $\delta\in (0,1)$, let $\{\tau_n\}_n$ be a sequence of $\mathcal F_t$-stopping times such that $\tau_n +\delta\le T$. We have, by the definition of Sobolev norm,
  \begin{equation}\label{prop-aldous-condition.0}
  \E\big[\|\xi^n(\tau_n +\delta) - \xi^n(\tau_n) \|_{H^{-\beta}}^2 \big] = \sum_l \frac1{|l|^{2\beta}} \E \big\<\xi^n(\tau_n +\delta) - \xi^n(\tau_n), e_l \big\>^2 .
  \end{equation}
Substituting $\phi= e_l$ in \eqref{scaling equ}, we obtain
  \begin{equation}\label{prop-aldous-condition.1}
  \begin{aligned}
  \big\<\xi^n(\tau_n +\delta) - \xi^n(\tau_n), e_l \big\> &= \int_{\tau_n}^{\tau_n +\delta} \langle  \xi^{n}(s), u^{n}(s) \cdot \nabla e_l \rangle\, ds \\
  &\quad + \sum_{k} \int_{\tau_n}^{\tau_n +\delta}\!\! \int_{|z| \leq 1}  \big\langle \xi^n(s-), e_l\big(\varphi_{k,z}^n \big) - e_l \big\rangle \, \tilde N_{k}^n(dz,ds)\\
  & \quad + \sum_{k} \int_{\tau_n}^{\tau_n +\delta}\!\! \int_{|z| \leq 1} \big\langle \xi^{n}(s), e_l\big(\varphi_{k,z}^n\big) - e_l -  z\theta^n_k \sigma_{k} \cdot\nabla e_l \big\rangle\, \nu(dz)ds,
  \end{aligned}
  \end{equation}
where the three terms on the right-hand side of \eqref{prop-aldous-condition.1} will be denoted as $I_i$, $i=1,2,3$.

First, we have $\P$-a.s.,
  $$\aligned
  |I_1| &\le \int_{\tau_n}^{\tau_n +\delta} \|\xi^{n}(s) \|_{L^2} \| u^{n}(s) \|_{L^2} \|\nabla e_l\|_{L^\infty}\, ds \le C\delta \|\xi_0 \|_{L^2}^2 |l|.
  \endaligned $$
Next, the third term can be estimated as follows:
  \begin{equation*}
\begin{aligned}
  |I_3| & \le \sum_{k} \int_{\tau_n}^{\tau_n +\delta}\!\! \int_{|z| \leq 1} \| \xi^{n}(s) \|_{L^2} \big\| e_l\big(\varphi_{k,z}^n\big) - e_l -  z\theta^n_k \sigma_{k} \cdot\nabla e_l \big\|_{L^2} \, \nu(dz)ds \\
  &\le \delta \|\xi_0 \|_{L^2} \int_{|z| \leq 1} \sum_{k} \big\| e_l\big(\varphi_{k,z}^n\big) - e_l -  z\theta^n_k \sigma_{k} \cdot\nabla e_l \big\|_{L^2} \, \nu(dz).
\end{aligned}
  \end{equation*}
Applying \eqref{2-taylor} with $\phi= e_l$ and $\varphi^{k,z}_1$ being replaced by $\varphi_{k,z}^n$, we obtain
  $$\aligned
  |I_3| & \le \delta \|\xi_0 \|_{L^2} \int_{|z| \leq 1} \sum_{k} \frac{z^2 (\theta^n_k)^2}2 \big\| \nabla^2 e_l\big(\zeta_{k,z}^n \big):( \sigma_{k}\otimes\sigma_k) \big\|_{L^2} \, \nu(dz) \\
  &\le C \delta \|\xi_0 \|_{L^2} |l|^2 \int_{|z| \leq 1} z^2\, \nu(dz),
  \endaligned $$
where we have used $\sum_{k} (\theta^n_k)^2 = \|\theta^n \|_{\ell^2}^2 =1$ and $\{\sigma_k \}_k$ are uniformly bounded by $\sqrt 2$.

It remains to estimate the second term $I_2$ which are the stochastic integrals in \eqref{prop-aldous-condition.1}: by the It\^o isometry,
  $$\aligned
  \E |I_2|^2 &= \sum_{k} \E \int_{\tau_n}^{\tau_n +\delta}\!\! \int_{|z| \leq 1}  \big\langle \xi^n(s), e_l\big(\varphi_{k,z}^n \big) - e_l \big\rangle^2 \,\nu(dz) ds \\
  &\le \sum_{k} \E \int_{\tau_n}^{\tau_n +\delta}\!\! \int_{|z| \leq 1}  \| \xi^n(s) \|_{L^2}^2 \big\| e_l\big(\varphi_{k,z}^n \big) - e_l \big\|_{L^2}^2 \,\nu(dz) ds \\
  &\le \delta \|\xi_0 \|_{L^2}^2 \int_{|z| \leq 1} \sum_{k} \big\| e_l\big(\varphi_{k,z}^n \big) - e_l \big\|_{L^2}^2 \,\nu(dz).
  \endaligned $$
Using the mean value theorem (cf. \eqref{Taylor-1} with $\phi= e_l$ and $\varphi^{k,i}_1$ replaced by $\varphi_{k,z}^n$), we have
  $$\aligned
  \E |I_2|^2 &\le \delta \|\xi_0 \|_{L^2}^2 \int_{|z| \leq 1} \sum_{k} z^2 (\theta^n_k)^2 \|\sigma_k\cdot\nabla e_l (\eta_{k,z}^n) \big\|_{L^2}^2 \,\nu(dz) \\
  &\le C \delta \|\xi_0 \|_{L^2}^2 |l|^2 \int_{|z| \leq 1} z^2\, \nu(dz).
  \endaligned $$

Combining the above estimates with \eqref{prop-aldous-condition.1}, we arrive at
  $$\E \big\<\xi^n(\tau_n +\delta) - \xi^n(\tau_n), e_l \big\>^2 \le C_\nu \delta \big(\|\xi_0 \|_{L^2}^4 + \|\xi_0 \|_{L^2}^2\big) |l|^4,  $$
where $C_\nu$ depends on $\int_{|z| \leq 1} z^2\, \nu(dz)$. Inserting this estimate into \eqref{prop-aldous-condition.0}, we obtain
  $$\E\big[\|\xi^n(\tau_n +\delta) - \xi^n(\tau_n) \|_{H^{-\beta}}^2 \big] \le C_\nu \delta \big(\|\xi_0 \|_{L^2}^4 + \|\xi_0 \|_{L^2}^2\big) \sum_l \frac1{|l|^{2\beta-4}} \le C'\delta, $$
where $C'$ is a finite constant since $\beta>3$. We complete the proof by applying Lemma \ref{adlous condition1}.
\end{proof}

\begin{remark}
Alternatively, one may proceed as follows. Making the change of variables $y=\varphi_{k,z}^{n}(x)$ and integrating by parts, we can rewrite \eqref{scaling equ} informally as follows:
\begin{equation*}
\begin{aligned}
\langle \xi^{n}(t), \phi \rangle &= \langle \xi_{0}, \phi \rangle - \int_{0}^{t} \langle u^{n}(s) \cdot \nabla\xi^{n}(s),\phi \rangle\, ds \\
&\quad + \sum_{k\in \mathbb{Z}_0^2} \int_{0}^{t}\! \int_{|z| \leq 1}  \big\langle \xi^n\big(s-, \varphi_{k,z}^{n,-1} \big) -\xi^n(s-), \phi \big\rangle \, \tilde N_{k}^n(dz,ds)\\
&\quad + \sum_{k\in \mathbb{Z}_0^2} \int_{0}^{t}\! \int_{|z| \leq 1} \big\langle \xi^{n}\big(s, \varphi_{k,z}^{n,-1} \big) -\xi^n(s) + z\theta^n_k \sigma_{k} \cdot\nabla\xi^n(s), \phi \big\rangle\, \,\nu(dz)ds.
\end{aligned}
\end{equation*}
By the arbitrariness of $\phi\in C^\infty(\T^d)$, we deduce that the equation below holds in the weak sense:
\begin{equation*}
\begin{aligned}
\xi^{n}(t) &=\xi_{0} - \int_{0}^{t} u^{n}(s) \cdot \nabla\xi^{n}(s) \, ds \\
&\quad + \sum_{k\in \mathbb{Z}_0^2} \int_{0}^{t}\! \int_{|z| \leq 1} \big[ \xi^n\big(s-, \varphi_{k,z}^{n,-1} \big) -\xi^n(s-) \big] \, \tilde N_{k}^n(dz,ds)\\
&\quad + \sum_{k\in \mathbb{Z}_0^2} \int_{0}^{t}\! \int_{|z| \leq 1} \big[ \xi^{n}\big(s, \varphi_{k,z}^{n,-1} \big) -\xi^n(s) + z\theta^n_k \sigma_{k} \cdot\nabla\xi^n(s) \big]\, \,\nu(dz)ds.
\end{aligned}
\end{equation*}
Using this equation, we can follow the arguments in Proposition \ref{aldous-condition} to show that $\{\xi^n \}_n$ satisfies the Aldous condition.
\end{remark}

With Proposition \ref{prop-aldous-condition} in hand, we can repeat the arguments in Proposition \ref{compact2} to show that the family of laws of $\{\xi^n\}_n$ is tight in $D([0,T]; H^{-\beta}(\mathbb{T}^2))$. An application of Skorohod's representation theorem then yields a new probability space $\big( \overline{\Omega}, \overline{\mathcal{F}}, \overline{\mathbb{P}} \big)$ supporting processes $\bar{\xi}^n$, $\bar{\xi}$, and a sequence $\{\bar{Z}^n\}_{n\ge 1}$ where each $\bar Z^n$ consists of independent L\'evy processes $\{\bar Z^n_k\}_k$ with the same L\'evy  measure $\nu$, such that
\begin{enumerate}
    \item[(1)] $\big(\bar{\xi}^n, \bar Z^n \big) \overset{\mathcal L}{=} (\xi^{n}, Z^n)$ for each $n \in \mathbb{N}$;
    \item[(2)] $\overline{\mathbb{P}}$-a.s., $\big(\bar{\xi}^n, \bar Z^n \big) \to (\bar{\xi},\bar{Z})$ as $n\to \infty$ in $D \big([0,T];H^{-\beta}(\mathbb{T}^2) \times \R^{\Z_0^2} \big)$.
\end{enumerate}
Furthermore, the $\overline{\P}$-a.s. convergence of $\bar{\xi}^n$ to $\bar{\xi}$ can be strengthened from $D([0,T]; H^{-\beta}(\mathbb{T}^2))$ to $D([0,T]; H^{-\varepsilon}(\mathbb{T}^2))$ for any $\varepsilon>0$, which implies convergence in $L^p([0,T];H^{-\varepsilon}(\mathbb{T}^2))$ for all $p\ge1$, following the proof of Proposition~\ref{L1L2}. For notational convenience, we will drop the overlines in the new sequences of processes and simply denote them by $(\xi^n, Z^n)$ and $(\xi, Z)$, but we will keep the notations $\overline{\Omega}$ and $ \overline{\mathbb{P}}$. Let $\tilde N^n = \big\{ \tilde N^n_k \big\}_k$ and $\tilde N = \{\tilde N_k \}_k$ be the compensated Poisson random measures associated to $Z^n= \{Z^n_k \}_k$ and $Z= \{Z_k \}_k$, respectively.

We now proceed to the proof of Theorem~\ref{2d-scal}.  Compared to the proof in Section \ref{subs-scaling-SLTE} dealing with the stochastic transport equations, the stochastic 2D Euler equation contains an additional nonlinear term $u\cdot\nabla\xi$. While all other terms can be treated analogously to those in Section \ref{subs-scaling-SLTE}, this nonlinear term requires special attention.

\begin{lemma}\label{nonlinear-con}
    The nonlinear term in \eqref{scaling equ} converges in $L^1\big(\overline \Omega, L^1([0,T] \big)$ to $\int_0^t \langle \xi(s), u(s) \cdot \nabla \phi \rangle\, ds$ as $n \to \infty$.
\end{lemma}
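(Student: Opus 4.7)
My plan is to decompose the difference into two pieces and treat each separately, closely following Step 2 of the proof of Theorem~\ref{thm:weak_solution} but working on the new probability space $(\overline\Omega,\overline{\mathcal F},\overline\P)$. Specifically, I would write
\begin{equation*}
\int_0^t \langle \xi^n(s), u^n(s)\cdot\nabla\phi\rangle\,ds - \int_0^t \langle \xi(s), u(s)\cdot\nabla\phi\rangle\,ds = A^n(t) + B^n(t),
\end{equation*}
where
\begin{equation*}
A^n(t) := \int_0^t \langle \xi^n(s) - \xi(s),\, u^n(s)\cdot\nabla\phi\rangle\,ds, \qquad B^n(t) := \int_0^t \langle \xi(s),\, (u^n(s) - u(s))\cdot\nabla\phi\rangle\,ds.
\end{equation*}

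For $A^n(t)$, fix a small $\varepsilon \in (0, 1/2)$ and use the duality between $H^{-\varepsilon}(\mathbb T^2)$ and $H^\varepsilon(\mathbb T^2)$, together with the fact that multiplication by the smooth vector field $\nabla\phi$ is bounded on $H^\varepsilon$. Since $u^n$ and $\xi^n$ are related by the Biot--Savart law with $\|u^n(s)\|_{H^\varepsilon} \leq \|u^n(s)\|_{H^1} \leq C\|\xi^n(s)\|_{L^2} \leq C\|\xi_0\|_{L^2}$ uniformly by \eqref{estimate-2}, we obtain the $\overline\P$-a.s. bound
\begin{equation*}
\sup_{t\in[0,T]} |A^n(t)| \leq C_\phi \|\xi_0\|_{L^2} \int_0^T \|\xi^n(s) - \xi(s)\|_{H^{-\varepsilon}}\,ds.
\end{equation*}
This vanishes $\overline\P$-a.s. as $n \to \infty$ thanks to the analog of Proposition~\ref{L1L2} on $(\overline\Omega,\overline\P)$, noting that $\bar\xi^n \to \bar\xi$ in $D([0,T]; H^{-\varepsilon}(\mathbb T^2))$ after invoking Proposition~\ref{qianrun}.

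For $B^n(t)$, a direct Cauchy--Schwarz estimate yields
\begin{equation*}
\sup_{t\in[0,T]}|B^n(t)| \leq \|\xi_0\|_{L^2} \|\nabla\phi\|_{L^\infty} \int_0^T \|u^n(s) - u(s)\|_{L^2}\,ds,
\end{equation*}
and by Corollary~\ref{velocity-convergence} the convergence $u^n \to u$ in $D([0,T]; H^{1-\varepsilon}(\mathbb T^2))$ passes to $L^1([0,T]; L^2(\mathbb T^2))$ via the dominated convergence argument used in Proposition~\ref{L1L2}. Hence $B^n(t) \to 0$ uniformly in $t$, $\overline\P$-a.s.

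Finally, to upgrade from $\overline\P$-a.s. convergence to convergence in $L^1(\overline\Omega, L^1([0,T]))$, I would apply the dominated convergence theorem: the deterministic $\overline\P$-a.s. bounds $\|\xi^n(s)\|_{L^2}, \|\xi(s)\|_{L^2} \leq \|\xi_0\|_{L^2}$ and $\|u^n(s)\|_{L^2}, \|u(s)\|_{L^2} \leq C\|\xi_0\|_{L^2}$ give a uniform envelope $|\langle \xi^n, u^n\cdot\nabla\phi\rangle| + |\langle \xi, u\cdot\nabla\phi\rangle| \leq C_\phi\|\xi_0\|_{L^2}^2$ for the integrands, so the pointwise $\overline\P$-a.s. convergence of the $L^1([0,T])$ norms upgrades to $L^1(\overline\Omega)$ convergence. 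The only place requiring care is the bookkeeping of Sobolev exponents for $A^n$: one must pick $\varepsilon$ so that $u^n\cdot\nabla\phi$ lies in $H^\varepsilon$, which is automatic because $u^n$ is uniformly bounded in $H^{1-\varepsilon} \subset H^\varepsilon$ for $\varepsilon \leq 1/2$ and $\nabla\phi$ is smooth; no application of Lemma~\ref{fenkai} is really needed here.
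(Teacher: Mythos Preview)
Your proposal is correct and follows essentially the same approach as the paper: split the difference by telescoping into a piece controlled by $\|\xi^n-\xi\|_{H^{-\varepsilon}}$ via Sobolev duality and a piece controlled by $\|u^n-u\|_{L^2}$ via Cauchy--Schwarz, then invoke the convergence results of Section~\ref{com-tight}. The only cosmetic differences are that the paper telescopes in the opposite order (pairing $\xi^n-\xi$ with the limit $u$ and $u^n-u$ with $\xi^n$) and invokes Lemma~\ref{fenkai} for the product estimate where you use the simpler boundedness of multiplication by $\nabla\phi$ on $H^\varepsilon$; both work equally well.
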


\begin{proof}
We define
\begin{align*}
J_1 &:=\overline{\mathbb{E}}\bigg[ \int_0^T \left| \int_0^t \langle \xi^n(s), u^n(s) \cdot \nabla \phi \rangle\, ds - \int_0^t \langle \xi(s), u(s) \cdot \nabla \phi \rangle\, ds \right| dt \bigg]\\
&\leq T\overline{\mathbb{E}}\bigg[ \int_0^T\! \big| \langle \xi^n(s), u^n(s) \cdot \nabla \phi -u(s) \cdot \nabla \phi \rangle \big| \, ds \bigg]  +T \overline{\mathbb{E}}\bigg[ \int_0^T\! \big| \langle \xi^n(s)-\xi(s), u(s) \cdot \nabla \phi \rangle \big| \,  ds \bigg] \\
&=: J_2 + J_3.
\end{align*}
For $J_2$, applying the Cauchy-Schwarz inequality yields
\begin{align*}
J_2 &\leq T\overline{\mathbb{E}}\bigg[\int_0^T \|\xi^n(s)\|_{L^2} \|(u^n(s) - u(s)) \cdot \nabla \phi\|_{L^2}\, ds \bigg] \\
&\leq T \|\xi_0\|_{L^2} \|\nabla \phi\|_{L^\infty}\, \overline{\mathbb{E}}\bigg[ \int_0^T \|u^n(s) - u(s)\|_{L^2}\, ds \bigg],
\end{align*}
where we used the energy estimate \eqref{estimate-2}. Since $u^n \to u$ $\overline{\P}$-a.s. in $L^1([0,T]; H^{1-\varepsilon}(\mathbb{T}^2))$, it is clear that $J_2 \to 0$. For $J_3$, we have
\begin{align*}
J_3 &\leq T \overline{\mathbb{E}}\bigg[ \int_0^T \|\xi^n(s) - \xi(s)\|_{H^{-\varepsilon}} \|u(s) \cdot \nabla \phi\|_{H^\varepsilon}\, ds \bigg] \\
&\leq T \overline{\mathbb{E}}\bigg[ \int_0^T  \|\xi^n(s) - \xi(s)\|_{H^{-\varepsilon}} \|u(s)\|_{H^{1-\varepsilon}} \|\nabla \phi\|_{H^\varepsilon}\, ds \bigg],
\end{align*}
where the second step is due to Lemma \ref{fenkai}. Note that $\|u(s)\|_{H^{1-\varepsilon}} \le C \|\xi(s)\|_{H^{-\varepsilon}} \le C' \|\xi_0\|_{L^2}$; combined with the convergence $\xi^n \to \xi$ in $L^1([0,T]; H^{-\varepsilon}(\mathbb{T}^2))$  yields $J_3 \to 0$ as $n \to \infty$.

Therefore, we conclude that, as $n \to \infty$,
\[
\int_0^t \langle \xi^n(s), u^n(s) \cdot \nabla \phi \rangle\, ds \to \int_0^t \langle \xi(s), u(s) \cdot \nabla \phi \rangle\, ds
\]
in $L^1\big(\overline{\Omega}, L^1([0,T]) \big)$. This completes the proof.
\end{proof}
\begin{lemma}\label{2d-levy}
    The last term in \eqref{scaling equ} converges to the limit $\kappa \int_0^t \langle \xi(s), \Delta \phi \rangle \, ds$ as $n \to \infty$, where $\kappa= C_2 \int_{|z|\le 1} z^2 \,\nu(dz) $.
\end{lemma}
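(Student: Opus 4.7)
The plan is to mirror the strategy of Lemma \ref{lem-levy}, adapted to the 2D Euler setting. First, by Taylor's formula applied to $\phi$ around $x$ (noting from \eqref{flow-expression-2d} that $\varphi_{k,z}^n(x) = x + z\theta_k^n \sigma_k(x)$), there exists $\zeta_{k,z}^n(x)$ on the segment joining $x$ and $\varphi_{k,z}^n(x)$ such that
\begin{equation*}
\phi\big(\varphi_{k,z}^n\big) - \phi - z\theta_k^n \sigma_k \cdot \nabla \phi = \frac{z^2(\theta_k^n)^2}{2}\, \nabla^2\phi\big(\zeta_{k,z}^n\big) : (\sigma_k \otimes \sigma_k).
\end{equation*}
Summing over $k \in \mathbb{Z}_0^2$ and applying the key identity \eqref{eq-important-equality} in dimension $d=2$, one decomposes
\begin{equation*}
\sum_{k} \big[\phi\big(\varphi_{k,z}^n\big) - \phi - z\theta_k^n \sigma_k \cdot \nabla \phi \big] = C_2 z^2 \Delta\phi(x) + \Psi_z^n(x),
\end{equation*}
where the remainder is
\begin{equation*}
\Psi_z^n(x) = \frac{z^2}{2} \sum_{k} (\theta_k^n)^2 \big[\nabla^2\phi\big(\zeta_{k,z}^n\big) - \nabla^2\phi(x)\big] : (\sigma_k \otimes \sigma_k)(x).
\end{equation*}

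Next I would show that the remainder term is negligible. Since $|\zeta_{k,z}^n - x| \le |\theta_k^n||z|\,\|\sigma_k\|_{L^\infty} \le \sqrt{2}\|\theta^n\|_{\ell^\infty}|z|$, one obtains the uniform bound $|\Psi_z^n(x)| \le C\,\|\nabla^3\phi\|_{L^\infty}\|\theta^n\|_{\ell^\infty}|z|^3$ exactly as in \eqref{1}--\eqref{2}. Combining this with $\|\xi^n(s)\|_{L^2} \le \|\xi_0\|_{L^2}$ from \eqref{estimate-2} gives
\begin{equation*}
\overline{\mathbb{E}} \int_0^T\! \bigg|\int_0^t\! \int_{|z|\le 1} \langle \xi^n(s), \Psi_z^n \rangle \,\nu(dz)\,ds\bigg|\,dt \le C\,T^2\,\|\xi_0\|_{L^2}\|\nabla^3\phi\|_{L^\infty}\|\theta^n\|_{\ell^\infty}\!\int_{|z|\le 1}\! z^2\,\nu(dz),
\end{equation*}
which vanishes as $n\to\infty$ by Hypotheses \ref{hypo-coefficients}(3).

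It remains to pass to the limit in the leading term $\kappa \int_0^t \langle \xi^n(s), \Delta\phi\rangle\,ds$, where $\kappa = C_2 \int_{|z|\le 1} z^2\,\nu(dz)$. Using the Cauchy-Schwarz-type estimate
\begin{equation*}
\overline{\mathbb{E}}\int_0^T \bigg|\int_0^t \langle \xi^n(s)-\xi(s),\Delta\phi\rangle\,ds\bigg|\,dt \le T\,\|\Delta\phi\|_{H^{\varepsilon}}\,\overline{\mathbb{E}}\int_0^T \|\xi^n(s)-\xi(s)\|_{H^{-\varepsilon}}\,ds,
\end{equation*}
the right-hand side tends to zero because $\overline{\mathbb{P}}$-a.s. $\xi^n \to \xi$ in $L^1([0,T];H^{-\varepsilon}(\mathbb{T}^2))$ (the strengthened convergence recalled right before the proof of Theorem~\ref{2d-scal}), combined with the uniform $L^\infty$ bound $\|\xi^n(s)\|_{L^2}\le \|\xi_0\|_{L^2}$ and the dominated convergence theorem. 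Summing both contributions yields the claimed convergence.

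The computations here are essentially routine once the right decomposition is in place; the only mild subtlety is that we need convergence in $L^1\big(\overline{\Omega}, L^1([0,T])\big)$ (matching the mode of convergence used throughout Section \ref{Proof of main results}), and this is delivered by the strong convergence of $\xi^n$ to $\xi$ in negative Sobolev norms together with the deterministic uniform bound on $\|\xi^n\|_{L^2}$. No interaction with the jump measures is needed in this step, since the integrand is already finite-variation in time.
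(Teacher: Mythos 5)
Your proof is correct and follows essentially the route the paper intends: it reproduces the Taylor-expansion-plus-identity-\eqref{eq-important-equality} decomposition of Lemma \ref{lem-levy}, bounds the remainder by $\|\theta^n\|_{\ell^\infty}$ exactly as in \eqref{1}--\eqref{2}, and the paper itself omits the details of Lemma \ref{2d-levy}, declaring them analogous. Your only adaptation—passing to the limit in the leading term via the $\overline{\mathbb{P}}$-a.s. convergence $\xi^n\to\xi$ in $L^1([0,T];H^{-\varepsilon}(\mathbb{T}^2))$ on the Skorohod-representation space, together with the uniform $L^2$ bound and dominated convergence, yielding convergence in $L^1\big(\overline{\Omega},L^1([0,T])\big)$—is precisely the adjustment the 2D Euler setting requires and matches how the paper treats the nonlinear term in Lemma \ref{nonlinear-con}.
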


\begin{lemma}\label{2d-m}
   The martingale term in \eqref{scaling equ} converges to $0$ in the mean square sense as $n \to \infty$.
\end{lemma}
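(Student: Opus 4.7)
The plan is to mimic closely the argument of Lemma~\ref{lem-mart}, since the martingale term in \eqref{scaling equ} has the same structure as the one in \eqref{STE-weak-n}, with the only differences being a sign (irrelevant after squaring), the 2D setting, and the fact that $\{\sigma_k\}_k$ is the 2D family of divergence-free vector fields. Denote the martingale term by
\[
M_t^n := \sum_{k\in \mathbb{Z}_0^2} \int_0^t\!\!\int_{|z|\le 1} \big\langle \xi^n(s-), \phi(\varphi_{k,z}^n)-\phi\big\rangle\, \tilde N_k^n(dz,ds).
\]
First, by the It\^o isometry for compensated Poisson integrals (Burkholder's inequality), one has
\[
\overline{\mathbb{E}}(M_t^n)^2 \le \sum_{k\in \mathbb{Z}_0^2}\overline{\mathbb{E}} \int_0^T\!\!\int_{|z|\le 1}\big\langle \xi^n(s),\phi(\varphi_{k,z}^n)-\phi\big\rangle^2\,\nu(dz)ds.
\]

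Next, I will apply Taylor's formula as in \eqref{2-taylor}, with $\theta_k$ replaced by $\theta_k^n$ and $\varphi_1^{k,z}$ replaced by $\varphi_{k,z}^n$, giving
\[
\phi(\varphi_{k,z}^n)-\phi = z\theta_k^n \sigma_k\cdot\nabla\phi + \tfrac{z^2(\theta_k^n)^2}{2}\nabla^2\phi(\zeta_{k,z}^n):(\sigma_k\otimes\sigma_k),
\]
for some intermediate point $\zeta_{k,z}^n$. Using $(a+b)^2\le 2a^2+2b^2$, the integrand splits into $J_1^n(s,z)+J_2^n(s,z)$. For the first-order piece, one bounds
\[
\sum_k \langle \xi^n(s)\nabla\phi,\sigma_k\rangle^2 \le \|\xi^n(s)\nabla\phi\|_{L^2}^2 \le \|\xi_0\|_{L^2}^2\|\nabla\phi\|_{L^\infty}^2
\]
by the orthonormality of $\{\sigma_k\}_k$ in $L^2(\T^2)$ and the energy estimate \eqref{estimate-2}, so that $\sum_k J_1^n(s,z)\le 2z^2\|\theta^n\|_{\ell^\infty}^2\|\xi_0\|_{L^2}^2\|\nabla\phi\|_{L^\infty}^2$ after pulling out the factor $(\theta_k^n)^2\le \|\theta^n\|_{\ell^\infty}^2$. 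For the second-order piece, I use Cauchy-Schwarz together with $\sum_k(\theta_k^n)^4\le \|\theta^n\|_{\ell^\infty}^2\sum_k(\theta_k^n)^2=\|\theta^n\|_{\ell^\infty}^2$ and the uniform bound $\|\sigma_k\otimes\sigma_k\|_{L^2}\le C$, yielding $\sum_k J_2^n(s,z)\le Cz^4\|\theta^n\|_{\ell^\infty}^2\|\xi_0\|_{L^2}^2\|\nabla^2\phi\|_{L^\infty}^2$.

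Combining the two estimates, integrating in $s\in[0,T]$ and in $z$ against $\nu(dz)$, and using $\int_{|z|\le 1}(z^2+z^4)\,\nu(dz)<\infty$ (which follows from $\int_{|z|\le 1}z^2\,\nu(dz)<\infty$ and $|z|\le 1$), I obtain
\[
\overline{\mathbb{E}}(M_t^n)^2 \le C_{T,\nu,\phi}\,\|\xi_0\|_{L^2}^2\,\|\theta^n\|_{\ell^\infty}^2,
\]
which vanishes as $n\to\infty$ by item~(3) of Hypotheses~\ref{hypo-coefficients}. Since the proof is a line-by-line adaptation of Lemma~\ref{lem-mart} with no new obstacle (the nonlinear drift plays no role here, and the orthonormality of $\{\sigma_k\}_k$ and the $\ell^\infty$-smallness of $\theta^n$ are exactly what drove the previous argument), there is no serious difficulty to anticipate.
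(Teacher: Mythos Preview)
Your proposal is correct and follows exactly the approach the paper intends: the paper explicitly states that the proof of Lemma~\ref{2d-m} is analogous to that of Lemma~\ref{lem-mart} and omits the details, and your line-by-line adaptation (Taylor expansion via \eqref{2-taylor}, orthonormality of $\{\sigma_k\}_k$, the bound $\sum_k(\theta_k^n)^4\le\|\theta^n\|_{\ell^\infty}^2$, and the energy estimate \eqref{estimate-2}) is precisely what is required.
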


The proofs of the above two lemmas are analogous to those of Lemmas \ref{lem-levy} and \ref{lem-mart}, respectively; we therefore omit the details. We now present the

\begin{proof}[Proof of Theorem~\ref{2d-scal}]
    Passing to the limit as $n \to \infty$ in equation~\eqref{scaling equ} and applying Lemmas~\ref{nonlinear-con}, \ref{2d-levy} and~\ref{2d-m}, we obtain
\[
\langle \xi(t), \phi \rangle = \langle \xi_0, \phi \rangle + \int_0^t \langle \xi(s), u(s) \cdot \nabla \phi \rangle\, ds + \kappa \int_0^t \langle \xi(s), \Delta \phi \rangle\, ds.
\]
By the uniqueness of weak solutions in $L^\infty([0,T],L^2(\T^2))$ for the deterministic 2D Navier-Stokes equation, we conclude that the whole sequence $\{\xi^n \}_n$ converges weakly to the same limit $\xi$, and thus complete the proof.
\end{proof}

\begin{appendices}
\renewcommand{\appendixname}{Appendix}
\renewcommand{\thesection}{\Alph{section}}

\section{Convergence of the Marcus maps}\label{third}

This appendix addresses the $L^2(\T^2)$-convergence of ${\rm e}^{z \theta_k A_k^n} \Pi_n\phi $ to $ {\rm e}^{z \theta_k \sigma\cdot\nabla} \phi= \phi(\varphi^{k,z}_1)$ which is used in Section \ref{Proof of main results}. Recall that the notation ${\rm e}^{z \theta_k A_k^n} \Pi_n\phi$ stands for $\rho_n(1,\cdot)$, where the function $\rho_n(t,x)$ solves
\begin{equation}\label{eq:pde}
\frac{d}{dt} \rho_n(t) = -z \theta_k  \Pi_n (\sigma_k \cdot \nabla \rho_n(t)), \quad
\rho_n(0) = \Pi_n \phi.
\end{equation}
Similarly, $ {\rm e}^{z \theta_k \sigma\cdot\nabla} \phi$ coincides with $\rho(1,\cdot)$, where $\rho$ solves:
\begin{equation}\label{eq:pde-2}
\frac{d}{dt} \rho(t) = -z \theta_k  \sigma_k \cdot \nabla \rho(t), \quad
\rho(0) = \phi.
\end{equation}
We have the following strong convergence result.

\begin{lemma}\label{C.1}
For $\rho_n$ and $\rho$ given as above, it holds
$$ \lim_{n\to\infty} \|\rho_n(t) - \rho(t)\|_{L^2} = 0 \quad \mbox{for all } t \in [0,T].$$
\end{lemma}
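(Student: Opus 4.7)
The plan is to exploit the $L^2$-norm-preserving property of both $\rho_n$ and $\rho$, combined with a weak compactness argument and uniqueness for the limiting transport equation. As a first step, I would verify that for all $t \in [0,T]$,
$$\|\rho_n(t)\|_{L^2} = \|\Pi_n \phi\|_{L^2}, \qquad \|\rho(t)\|_{L^2} = \|\phi\|_{L^2}.$$
The first identity follows by pairing \eqref{eq:pde} with $\rho_n(t) \in H_n$ and using the self-adjointness of $\Pi_n$ together with the divergence-free character of $\sigma_k$ (the same calculation appearing in the proof of Proposition~\ref{xi-Linf-L2}); the second is immediate since $\langle \sigma_k \cdot \nabla \rho, \rho\rangle = \tfrac12 \langle \sigma_k, \nabla|\rho|^2\rangle = 0$.

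Next, testing \eqref{eq:pde} against an arbitrary $\varphi \in C^\infty(\mathbb{T}^2)$ and integrating by parts yields
$$\langle \rho_n(t), \varphi\rangle = \langle \Pi_n \phi, \varphi\rangle + z \theta_k \int_0^t \langle \rho_n(s), \sigma_k \cdot \nabla \Pi_n \varphi\rangle\, ds.$$
The uniform bound on $\|\rho_n\|_{L^\infty(0,T;L^2)}$ shows that $t \mapsto \langle \rho_n(t),\varphi\rangle$ is equi-Lipschitz in $n$. Combined with the uniform $L^2$-bound and the separability of $C^\infty(\mathbb{T}^2)$, a diagonal Arzel\`a--Ascoli argument produces a subsequence (not relabeled) and a limit $\tilde\rho \in L^\infty([0,T];L^2(\mathbb{T}^2))$ such that $\rho_n(t) \rightharpoonup \tilde\rho(t)$ weakly in $L^2(\mathbb{T}^2)$ for every $t\in[0,T]$. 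Passing to the limit in the displayed identity, using that $\Pi_n\varphi \to \varphi$ in $C^1$ since $\varphi$ is smooth, one finds that $\tilde\rho$ satisfies the weak formulation of \eqref{eq:pde-2}. Uniqueness of $L^2$-solutions to the linear transport equation with the smooth divergence-free coefficient $\sigma_k$ (which can be proved directly from the energy identity applied to the difference of two solutions) then forces $\tilde\rho = \rho$. As every subsequence of $\{\rho_n\}$ admits a further subsequence with the same limit, the entire sequence satisfies $\rho_n(t) \rightharpoonup \rho(t)$ weakly in $L^2$ for each $t \in [0,T]$.

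Finally, I would upgrade weak to strong convergence via norm convergence: from the first step and $\|\Pi_n\phi\|_{L^2} \to \|\phi\|_{L^2}$,
$$\|\rho_n(t)\|_{L^2} = \|\Pi_n \phi\|_{L^2} \longrightarrow \|\phi\|_{L^2} = \|\rho(t)\|_{L^2}.$$
Combining this with $\rho_n(t) \rightharpoonup \rho(t)$ in the Hilbert space $L^2(\mathbb{T}^2)$ yields $\rho_n(t) \to \rho(t)$ strongly in $L^2(\mathbb{T}^2)$ for every $t\in[0,T]$, as claimed.

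The main technical point is the passage to the limit inside the Galerkin-projected transport term: a direct energy estimate on $\rho_n - \Pi_n\rho$ is blocked by the factor $\sigma_k \cdot \nabla(\Pi_n\rho - \rho)$, whose $L^2$-norm is not controlled uniformly in $n$. Moving $\Pi_n$ onto the smooth test function $\varphi$ (where $\Pi_n\varphi \to \varphi$ in every $C^m$) circumvents this difficulty, so the argument hinges on the smoothness of $\varphi$ and the uniform $L^2$-bound on $\rho_n$ together with the norm-preservation identity.
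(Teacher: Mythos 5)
Your proposal is correct and follows the same overall strategy as the paper: norm preservation of both $\rho_n$ and $\rho$, weak compactness and identification of the limit through the weak formulation plus uniqueness for the limiting transport equation, a subsequence argument to get whole-sequence convergence, and finally the upgrade from weak to strong convergence in $L^2(\mathbb{T}^2)$ via convergence of norms. The one genuine difference is the compactness device: the paper first extracts a weak-$\ast$ limit in $L^\infty([0,T];L^2)$ by Banach--Alaoglu, identifies it as $\rho$ via space-time test functions, and then needs a second pass with time-independent test functions (reusing the integral identity) to deduce $\rho_n(t)\rightharpoonup\rho(t)$ at each fixed $t$; you instead obtain pointwise-in-time weak convergence in one stroke, by observing that $t\mapsto\langle\rho_n(t),\varphi\rangle$ is uniformly bounded and equi-Lipschitz (uniformly in $n$, for each smooth $\varphi$) and applying Arzel\`a--Ascoli with a diagonal extraction over a countable dense family of test functions. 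Both routes are sound, and yours is arguably slightly more streamlined since it avoids the paper's final upgrading step; your observation that $\Pi_n$ should be moved onto the smooth test function rather than attempting an energy estimate on $\rho_n-\Pi_n\rho$ is exactly the mechanism the paper uses as well. One small caveat: your parenthetical claim that uniqueness of $L^\infty([0,T];L^2)$ weak solutions follows ``directly from the energy identity applied to the difference of two solutions'' is a bit glib, since for merely $L^2$-valued weak solutions the energy identity itself requires a commutator/renormalization argument (harmless here because $\sigma_k$ is smooth and divergence-free, e.g.\ by DiPerna--Lions or duality with the smooth backward flow); the paper invokes the same uniqueness without proof, so this is not a gap relative to its standard.
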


\begin{proof}
Differentiating the squared \( L^2(\mathbb{T}^2) \)-norm of \( \rho_n(t) \), we obtain
\begin{equation}
\begin{aligned}\label{rho_n(t)}
    \frac{d}{dt} \| \rho_n(t) \|_{L^2}^2
    &= 2 \Big\langle \frac{d}{dt} \rho_n(t), \rho_n(t) \Big\rangle = -2 z\theta_k  \langle \Pi_n (\sigma_k \cdot \nabla \rho_n(t)), \rho_n(t) \rangle \\
    &= -2 z\theta_k  \langle \sigma_k \cdot \nabla \rho_n(t), \rho_n(t) \rangle = 0
\end{aligned}
\end{equation}
since $\sigma_k$ is divergence-free. Similarly,
\begin{equation}\label{rho(t)}
    \begin{aligned}
    \frac{d}{dt} \| \rho(t) \|_{L^2}^2 &= 2 \Big\langle \frac{d}{dt} \rho(t), \rho(t) \Big\rangle = -2 z\theta_k  \langle \sigma_k \cdot \nabla \rho(t), \rho(t) \rangle =0.
\end{aligned}
\end{equation}
Combining \eqref{rho_n(t)} with \eqref{rho(t)} we obtain
\begin{equation}\label{dengyu}
\| \rho_n(t) \|_{L^2}^2 = \| \rho_n(0) \|_{L^2}^2 = \| \Pi_n \phi \|_{L^2}^2 \rightarrow\| \phi \|_{L^2}^2=\| \rho(0) \|_{L^2}^2 = \| \rho(t) \|_{L^2}^2
\end{equation}
as $n\to\infty$. The above also implies that \( \{ \rho_n \}_n \) is uniformly bounded in \( L^\infty([0, T]; L^2(\mathbb{T}^2)) \). Hence, there exists a subsequence \( \{ \rho_{n_j} \}_j \) and  a limit  $\bar{\rho}$ such that \( \rho_{n_j} \rightharpoonup \bar{\rho} \) weakly-$\ast$ in \( L^\infty([0, T]; L^2(\mathbb{T}^2)) \).

For any $\psi\in C^1_c([0,T), C^1(\T^2))$, multiplying both sides of \eqref{eq:pde} by $\psi$ and integrating on $[0,T]\times \T^2$, we obtain
$$\int_0^T \Big\< \frac{d}{dt} \rho_n(t), \psi(t)\Big\>\, dt = -z \theta_k \int_0^T \big\< \Pi_n (\sigma_k \cdot \nabla \rho_n(t)), \psi(t)\big\>\, dt. $$
Integrating by parts leads to
\begin{equation*}
\langle \Pi_n\phi, \psi(0) \rangle +\int_0^T \langle \rho_n(t), \psi'(t) \rangle\, dt + z\theta_k \int_0^T \big\langle \rho_n(t), \sigma_k \cdot\nabla(\Pi_n \psi(t)) \big\rangle \, dt =0.
\end{equation*}
Note that $\|\Pi_n\phi -\phi\|_{L^2}\to 0$ as $n\to\infty$; replacing $n$ by $n_j$ in the above identity and letting $j\to \infty$ give us
\begin{equation*}
\langle \phi, \psi(0) \rangle +\int_0^T \langle \bar\rho(t), \psi'(t) \rangle\, dt + z\theta_k \int_0^T\langle   \bar\rho (t), \sigma_k \cdot  \nabla \psi(t) \rangle \, dt =0.
\end{equation*}
This means that the limit \( \bar{\rho} \) is a weak solution to the transport equation \eqref{eq:pde-2} with initial data $\phi$. By the uniqueness of weak solutions to \eqref{eq:pde-2} in \( L^\infty([0, T]; L^2(\mathbb{T}^2)) \), the entire sequence \( \{ \rho_n\}_n \) converges weakly-$\ast$ to \( \rho \) in \( L^\infty([0, T]; L^2(\mathbb{T}^2)) \).

To show that $\|\rho_n(t) -\rho(t) \|_{L^2}\to 0$ for any $t\in [0,T]$, taking an arbitrary time-independent $\psi\in C^1(\T^2)$ and integrating both sides of \eqref{eq:pde} against $\psi$, we get
\begin{equation*}
\aligned
\langle \rho_n(t), \psi \rangle &= \langle \rho_n(0), \psi \rangle -z\theta_k \int_0^t\langle \Pi_n (\sigma_k \cdot \nabla \rho_n(s)), \psi \rangle \, ds \\
&= \langle \Pi_n\phi, \psi \rangle + z\theta_k \int_0^t\langle \rho_n(s), \sigma_k \cdot \nabla(\Pi_n \psi) \rangle \, ds,
\endaligned
\end{equation*}
where the second step is due to integration by parts. Letting $n\to \infty$ leads to
  $$\lim_{n\to\infty} \langle \rho_n(t), \psi \rangle= \langle \phi, \psi \rangle + z\theta_k \int_0^t\langle \rho(s), \sigma_k \cdot \nabla \psi \rangle \, ds. $$
Using the equation \eqref{eq:pde-2}, we easily deduce that the right-hand side is nothing but $\langle \rho(t), \psi \rangle$. Summarizing these facts, we conclude that $\rho_n(t)$ converges weakly to $\rho(t)$; combined with the convergence of $L^2$-norm in \eqref{dengyu}, we finally obtain the strong convergence $\|\rho_n(t) -\rho(t) \|_{L^2}\to 0$ as $n\to\infty$. \end{proof}

\bigskip

\noindent\textbf{Acknowledgements.} The first author would like to thank Professor Flandoli for helpful discussions at the initial stage of this work. He is also grateful to the financial supports of the National Key R\&D Program of China (No. 2024YFA1012301) and the National Natural Science Foundation of China (Nos. 12090010, 12090014).

\end{appendices}

\end{document}